\newtheorem{theorem}{Theorem}[section]
\newtheorem{lemma}[theorem]{Lemma}
\theoremstyle{definition}
\newtheorem{definition}[theorem]{Definition}
\newtheorem{example}[theorem]{Example}
\newtheorem{proposition}[theorem]{Proposition}
\newtheorem{corollary}[theorem]{Corollary}
\theoremstyle{remark}
\newtheorem{remark}[theorem]{Remark}
\begin{document}
\title[$r$-Hyperideals{ in Krasner Hyperrings}]{{$\MakeLowercase{r}$-hyperideals and generalizations of $\MakeLowercase{r}$-hyperideals in Krasner hyperrings}}
\author[P. Xu]{Peng Xu}
\address{Institute of Computational Science and Technology, Guangzhou University, Guangzhou,Guangdong,510006, P.R. China.,, School of Computer Science of Information Technology, QiannanNormal University for Nationalities, Duyun, Guizhou, 558000, P.R. China}
\email{gdxupeng@gzhu.edu.cn}
\author[M. Bolat]{Melis Bolat}
\address{Department of Mathematics, Yildiz Technical University, Istanbul, Turkey.
Orcid: 0000-0002-0667-524X}
\email{melisbolat1@gmail.com }
\author[E. Kaya]{Elif Kaya}
\address{Department of Mathematics and Science Education, Istanbul Sabahattin Zaim
University, Istanbul, Turkey. Orcid: 0000-0002-8733-2734}
\email{elif.kaya@izu.edu.tr}
\author[S. Onar]{Serkan Onar}
\address{Department of Mathematical Engineering, Yildiz Technical University, Istanbul,
Turkey. Orcid: 0000-0003-3084-7694}
\email{serkan10ar@gmail.com}
\author[B.A. Ersoy]{Bayram Ali Ersoy}
\address{Department of Mathematics, Yildiz Technical University, Istanbul, Turkey.
Orcid: 0000-0002-8307-9644}
\email{ersoya@yildiz.edu.tr}
\author[K. Hila]{Kostaq Hila}
\address{Department of Mathematical Engineering, Polytechnic University of Tirana,
Tirana, Albania. Orcid: 0000-0001-6425-2619}
\email{kostaq\_hila@yahoo.com}
\subjclass[2000]{13A15, 13C05.}
\keywords{$r$-hyperideal, $pr$-hyperideal, $\phi-r$-hyperideal, $\phi-$prime hyperideal}

\begin{abstract}
In this study, we examine some properties of $r$-hyperideals in the commutative Krasner hyperrings. Some properties of
$pr$-hyperideals are also studied. The relation between prime hyperideals and
$r$-hyperideals is investigated. We show that the image and the inverse image of an $r$-hyperideal is also
an $r$-hyperideal. We also introduce a generalization of $r$-hyperideals
and we prove some properties of them. 
\end{abstract}
\maketitle

\section{Introduction}

Prime ideals and primary ideals play a significant role in commutative ring
theory. Numerous generalizations of prime ideals have been studied by different
researchers. The concepts of $\phi$-prime, $\phi$-primary ideals are examined
in \cite{X2,X10}.

French mathematician Marty \cite{X22} initated the study of hyperstructures, as an
expansion of classical algebraic structures, in special, of hypergroups, at the 8th
Congress of Scandinavian Mathematicians in 1934. Subsequently, numerous
articles and various books have been published about this issue. This theory
has been studied by Corsini \cite{X9}, Vougiouklis \cite{X31}, Mittas
\cite{X25,X24}, Stratigopoulos \cite{X30}, Davvaz \cite{X14,X12,X23}, Krasner
\cite{X20}, Ameri \cite{X1}, De Salvo \cite{X15}, Barghi \cite{X8} and
Asokkumar and Velrajan \cite{X5,X6}, etc.

Hyperrings $(\Re,+,\cdot)$ are of different types. For example, when the
addition + is a hyperoperation and the multiplication $\cdot$ is a binary
operation, we name them Krasner (additive) hyperrings \cite{X20}. If +
is a binary operation and the multiplication $\cdot$ is a hyperoperation, the
hyperring is called multiplicative hyperring \cite{X29}.

Mohamadian investigated the $r$-ideals in \cite{X26}. In \cite{X3}, Ugurlu
studied generalizations of $r$-ideals. The notion of $r$-hyperideals was briefly mentioned in \cite{X28}, as a generalization of $r$-ideals in commutative rings. In this paper, our aim is to extend the
notion of $r$-ideals to $r$-hyperideals in Krasner hyperrings and generalize them. Let $\Re$ be a commutative Krasner hyperring, $N$ be a proper
hyperideal of $\Re$ and $\phi:Id(\Re)\rightarrow Id(\Re)\cup\{\emptyset\}$ be
a function. $Id(\Re)$ denotes the hyperideals of $\Re$. $N$ is called $\phi
-r$-hyperideal (resp., $\phi-pr$-hyperideal) if $a\cdot b\in N-\phi(N)$ with
$ann(a)=0$ implies that $b\in N$ (resp., $b^{n}\in N$), for $a,b\in\Re$. Some properties of them are provided. As a result, we obtain that the union of directed
collection of ascending chain $\phi-r$-hyperideals of $R$ is also $\phi
-r$-hyperideal of $\Re$, when $\phi$ preserves the order.

\section{Preliminaries}

First of all, we will give the definitions of $\phi-$prime ideal, $\phi-$primary ideal and $r$-ideal.

\begin{definition}
\cite{X18} Let $\Re$ be a commutative ring. A reduction of ideals is a function $\phi$ that leads any ideal $N$
of $\Re$ to other ideal $\phi(N)$ of $\Re$ such that the following statements hold:

$i)$ For all ideals $N$ of $\Re,$ $\phi(N)\subseteq N$,

$ii)$ If $N\subseteq M$ where $N$ and $M$ are ideals of $\Re$, then
$\phi(N)\subseteq\phi(M).$
\end{definition}

\begin{definition}
Let $\Re$ be a commutative ring. Let $N$ be an ideal of $\Re$. For $a,b\in\Re$:

$i)$ $N$ is said to be a $\phi-$prime ideal, if $ab\in N-\phi(N)$, then $a\in
N$ or $b\in N$ \cite{X2},

$ii)$ $N$ is said to be a $\phi-$primary ideal, if $ab\in N-\phi(N)$, then
$a\in N$ or $b\in\sqrt{N}$ \cite{X10}$.$
\end{definition}

\begin{definition}
\cite{X26} Let $\Re$ be a commutative ring. A proper ideal $N$ of $\Re$ is called an $r$-ideal (resp., $pr$-ideal),
if $ab\in N$ with $ann(a)=(0)$ implies that $b\in N$ (resp., $b^{n}\in N,$ for
any $n\in%
\mathbb{N}
$), for each $a,b\in\Re.$
\end{definition}

\begin{definition}
\cite{X3} Let $\Re$ be a commutative ring. A proper ideal $N$ of $\Re$ is called a $\phi-r$-ideal, if $ab\in
N-\phi(N)$ with $ann(a)=(0)$ implies that $b\in N$\ for $a,b\in\Re.$
Furthermore, $N$ is called a $\phi-pr$-ideal, if $ab\in N-\phi(N)$ with
$ann(a)=(0)$ implies that $b^{n}\in N$ $(n\in%
\mathbb{N}
)$, for $a,b\in\Re.$
\end{definition}

In the following, we recall some notions regarding hyperstructures.

Let $G$ be a nonempty set and $\circ: G\times G\rightarrow \mathcal{P}^{*}(G)$ be a mapping, where $\mathcal{P}^{*}(G)$ denotes the set of all nonempty subsets of $G$. Then, $\circ$ is called a \textit{binary (algebraic) hyperoperation} on $G$. An algebraic system $(G, \circ)$, where $\circ$ is a binary hyperoperation defined on $G$, is called a hypergroupoid \cite{X22}. Let
$(G,\circ)$ be a hypergroupoid and $\emptyset\neq$ $A,B$ be subsets of $G.$

$A\circ B=\bigcup\limits_{\substack{a\in A\\b\in B}}a\circ b$ and for $x\in
G$, $x\circ A=\left\{  x\right\}  \circ A,$ $A\circ x=A\circ\left\{
x\right\}.$ For each $x,y,z\in G,$ if $x\circ(y\circ z)=(x\circ y)\circ z,$
which means, $\bigcup\limits_{u\in x\circ y}u\circ z=\bigcup\limits_{v\in y\circ
z}x\circ v$, then $G$ is called a semihypergroup. When $(G,\circ)$ is a
hypergroupoid, if there is $e\in G$ such that $x\in(e\circ x)\cap(x\circ e)$, for every
$x\in G,$ then $e$ is called identity element.

Semihypergroup $(G,\circ)$ is said to be a hypergroup if $x\circ G=G\circ
x=G,$ $\forall x\in G$ \cite{X22}. If $(G,\circ)$ is a hypergroup,
$\emptyset\neq H$ is a subset of $G\ $and $x\circ H=H\circ x=H,$ $\forall
x\in H,$ then $(H,\circ)$ is called a subhypergroup of $(G,\circ)$.

Let $(G,\circ)$ be a hypergroup. If\ $x\circ y=y\circ x,$ $\forall x,y\in
G$, then $(G,\circ)$ is called a commutative hypergroup.

\begin{definition}
A non-empty set $\Re$ along with the hyperoperation $+$ is called a canonical
hypergroup if the following axioms hold:

i) $ x+(y+z)=(x+y)+z,$ for $x,y,z\in\Re;$

ii) $ x+y=y+x,$ for $x,y\in\Re;$

iii) there exists $0\in\Re$ such that $x+0=\{x\},$ for any $x\in\Re;$

iv) for any $x\in\Re,$ there exists a unique element $x^{\prime}\in\Re,$
such that $0\in x+x^{\prime}$ ($x^{\prime}$ is called the inverse of $x$)

v) $z\in x+y$ implies that $y\in-x+z$ and $x\in z-y,$ that is, $(\Re,+)$ is
reversible \cite{X25}.
\end{definition}

\begin{definition}
$(\Re,+,\cdot)$ is called a Krasner hyperring if it satisfies the following conditions:

1) $(\Re,+)$ is a canonical hypergroup;

2) $(\Re, \cdot)$ is a semigroup having $0$ as a bilaterally absorbing element,
i.e., $x\cdot 0=0\cdot x=0,$ for all $x\in\Re;$

3) $(y+z)\cdot x=(y\cdot x)+(z\cdot x)$ and $x\cdot(y+z)=(x\cdot y)+(x\cdot z),$ for all $x,y,z\in\Re$ \cite{X20}.
\end{definition}

\begin{lemma}
\cite{X13} A non-empty subset $A$ of a Krasner hyperring $\Re$ is a left
(resp. right) hyperideal if and only if

i) $x-y\subseteq A$ $(x-y\subseteq A),$ for $x,y\in A$,

ii) $r\cdot x\in A$ $(resp.$ $x\cdot r\in A),$ for $x\in A,r\in\Re.$
\end{lemma}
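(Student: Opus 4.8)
The plan is to prove the two implications separately, treating the statement as the hyper-analogue of the classical subring/ideal test, where the single condition (i) is meant to encode simultaneously closure under hyperaddition and closure under inverses. The forward direction is essentially a matter of unpacking definitions: if $A$ is a left hyperideal, then $(A,+)$ is by definition a canonical subhypergroup of $(\Re,+)$, so for $x,y\in A$ we have $-y\in A$ and hence $x+(-y)\subseteq A$, that is, $x-y\subseteq A$; and the defining absorption property gives $r\cdot x\in A$ for $x\in A$, $r\in\Re$. Thus (i) and (ii) hold with no extra work.

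For the converse I would assume (i) and (ii) and show that (i) alone promotes $A$ to a canonical subhypergroup, since (ii) is already exactly the absorption requirement. First I would locate the zero: as $A\neq\emptyset$, pick any $x\in A$; by (i) we have $x-x\subseteq A$, and axiom (iv) gives $0\in x+(-x)=x-x$, so $0\in A$. Next, for $x\in A$ apply (i) to the pair $0,x\in A$ to get $0-x\subseteq A$; since $0$ is neutral (axiom (iii), together with commutativity) this set equals $\{-x\}$, whence $-x\in A$, so $A$ is closed under inverses.

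With inverses available, closure under hyperaddition follows at once: given $x,y\in A$ we have $-y\in A$, so (i) yields $x-(-y)\subseteq A$, and using the involutivity $-(-y)=y$ (a consequence of the uniqueness clause in axiom (iv)) this reads $x+y\subseteq A$. To confirm the reproduction axiom $a+A=A$ for each $a\in A$, the inclusion $a+A\subseteq A$ is the closure just established, while for $A\subseteq a+A$ I would invoke reversibility (axiom (v)): given $c\in A$, the set $-a+c\subseteq A$ is nonempty, and any $d$ in it satisfies $c\in a+d\subseteq a+A$. This makes $(A,+)$ a canonical subhypergroup, and combined with (ii) shows $A$ is a left hyperideal; the right-hyperideal case is identical with the multiplication placed on the opposite side.

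I expect the only genuinely delicate points to be the appeals to the structural axioms of the canonical hypergroup — extracting $0$ from $x-x$, recognizing that $0-x=\{-x\}$, and using reversibility to obtain the reverse inclusion in reproduction — rather than any computation. Once these are handled carefully, the argument is a direct transcription of the classical ideal criterion.
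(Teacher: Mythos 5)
The paper offers no proof of this lemma at all: it is quoted, with a citation, from Davvaz and Leoreanu-Fotea's book (reference \cite{X13}), so there is no internal argument to compare yours against; your proposal must be judged on its own merits, and on those merits it is correct and complete. The forward direction is, as you say, pure unpacking of definitions. In the converse, each of the delicate points is handled soundly: $0\in A$ follows from $0\in x+(-x)=x-x\subseteq A$ (axiom (iv) together with condition (i)); $-x\in A$ follows from $0-x=0+(-x)=\{-x\}\subseteq A$ (axiom (iii) plus commutativity); and the involution $-(-y)=y$ needed for closure, $x+y=x-(-y)\subseteq A$, follows from $0\in y+(-y)=(-y)+y$ and the uniqueness clause of axiom (iv). Your verification of reproduction, $a+A=A$ for $a\in A$, is also right: $a+A\subseteq A$ is the closure just established, and for the reverse inclusion, picking $d\in -a+c\subseteq A$ (nonempty since hyperoperations take nonempty values, and contained in $A$ by (i)) and applying reversibility gives $c\in a+d\subseteq a+A$, which is exactly what the paper's definition of subhypergroup demands; condition (ii) is then literally the absorption requirement, and the right-sided case is symmetric. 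In short, you have reconstructed the standard subhypergroup-criterion proof that the cited book supplies, and nothing is missing.
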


\begin{definition}
\cite{X19} Let $N$ be a hyperideal of a hyperring $\Re.$ Then $\sqrt{N}%
=\{r\in\Re:r^{n}\in N,$ \text{for some} $n\in%
\mathbb{N}
\}.$
\end{definition}

Let $\Re$ be a hyperring. For $a\in\Re,$ we define $ann(a)=\{r\in\Re:ra=0\}$.
If $ann(a)=0$ (resp., $ann(a)\neq0$)$,$ $a$ is said to be regular (resp., zero
divisor). We use the notion $r(\Re)$ (resp., $zd(\Re)$ to denote the set of all regular
elements (resp., zero divisors) \cite{X17}.

\begin{definition}
\cite{X28} A proper hyperideal $N$ of a commutative Krasner hyperring $\Re$ is
called an r-hyperideal (pr-hyperideal), if $a\cdot b\in N$ and $ann(a)=0$
implies that $b\in N$ ($b^{n}\in N$, for some $n\in%
\mathbb{N}
$), for any $a,b\in\Re$.
\end{definition}

\begin{example}
Clearly, $%
\mathbb{Z}
_{6}$ is a Krasner hyperring with the usual addition and multiplication and
$\{\hat{0},\hat{2},\hat{4}\}=N$ is a proper hyperideal.

$\qquad ann(\hat{5})=0$ and $\hat{0}.\hat{5}=\hat{0}\in$
$N$ implies that $\hat{0}\in N.$

$\qquad ann(\hat{5})=0$ and $\hat{2}.\hat{5}=\hat{4}\in$
$N$ implies that $\hat{2}\in N$ .

\qquad$ann(\hat{5})=0$ and $\hat{4}.\hat{5}=\hat{2}\in$
$N$ implies that $\hat{4}\in N$ .

Then $N$ is an $r$-hyperideal of $%
\mathbb{Z}
_{6}.$
\end{example}

\begin{example}\cite{Hila} Let $H=\{0, 1, a\}$ and $B=\{0, a\}$. Then, $(H, +, \cdot)$ is a Krasner hyperring with the hyperaddition and multiplication defined by 
\begin{equation*}
\begin{tabular}{l|lll}
$+ $ & 0 & 1 & $a$ \\ \hline
0 & 0 & 1 & $a$ \\
1 & 1 & $H$ & 1\\
$a$ & $a$ & 1 & $B$
\end{tabular}
\ \ \ \
\begin{tabular}{l|lll}
$\cdot $ & 0 & 1 & $a$ \\ \hline
0 & 0 & 0 & 0 \\
1 & 0 & 1 & $a$\\
$a$ & 0 & $a$ & 0
\end{tabular}
\end{equation*}%
Clearly, $B$ is an $r$-hyperideal of $H$.
\end{example}

\begin{example}\cite{kemprasit} Let $H=[0, 1]$. Then, $(H, \oplus, \cdot)$ is a Krasner hyperring where $\cdot$ is the usual multiplication and the hyperaddition $\oplus$ is defined by 
\begin{equation*}
x\oplus y=\begin{cases}
\{\max\{x,y\}\}, & \text{if}\ x\neq y\\
[0,x], & \text{if}\ x=y
\end{cases}
\end{equation*}%
Clearly, $I=[0, a]$, where $H\ni a<1$, is an $r$-hyperideal of $H$.
\end{example}

$Min(N)$ is the set of all minimal prime hyperideals which contain $N$. $Min((0))$ is
stated by $Min(\Re).$

\begin{definition}\cite{X21}
A ring $\Re$ satisfies:
\begin{itemize}
	\item Property A: if any finitely generated ideal
$N\subseteq zd(\Re)$ has non-zero annihilator; 
\item Annihilator condition: if
for any finitely generated ideal $N$ of $\Re$, there exists an element $a\in
\Re$ such that $ann(N)=ann(a)$; 
\item Strong annihilator condition (briefly
s.a.c.): if for any finitely generated ideal $N$ of $\Re$, there exists an
element $a\in N$ such that $ann(N)=ann(a)$ . 
\end{itemize}
\end{definition}

We use the same definition
for a hyperideal $N$ of hyperring $\Re.$

$N$ is a $z^{0}$-hyperideal if $a\in N,$ $b\in\Re$ and $ann(a)=ann(b)$ imply
that $b\in N.$

$soc(\Re)$ denotes the sum of all minimal hyperideals of $\Re$. The socle of a
reduced hyperring $\Re$ is called $soc(\Re)=\oplus_{i\in A}e_{i}\Re,$ where
$\{e_{i}:i\in A\}$ is the set of idempotents of $\Re$ \cite{X11}.

Let $A$ be a subset of $\Re$ and $a\in A$ such that $a\in a\Re a.$ Then $a$ is
called von Neumann regular element$.$ Therefore, if all of the elements are
von Neumann regular, $A$ or $\Re$ is called von Neumann regular \cite{X7}.

A two-sided hyperideal $N$ in a semihypergroup $\Re$ is called right pure
hyperideal if for each $a\in N$, there exists $b\in N$ such that $a\in ab$
\cite{X16}. Similarly, a hyperideal is called pure hyperideal in a Krasner
hyperring, if for each $a\in N$, there exists $b\in N$ such that $a=ab.$

A hyperring $\Re$ is called a reduced hyperring if there is no nilpotent
elements in $\Re$. If $x^{n}=0$ for $x\in\Re$,$\ n\in%
\mathbb{N}
$, then $x=0$ \cite{X4}.

Let $f:\Re\rightarrow S$ be a homomorphism, $N$ be a hyperideal of $\Re$ and
$M$ be a hyperideal of $S.$ Then, the hyperideal $\langle f(N)\rangle$ is said to be the
extension of $N$ and it is denoted by $N^{e}.$ The hyperideal $f^{-1}(M)$ is called
contraction of $M$ and denoted by $M^{c}$ \cite{X27} . The mapping
$\varphi:\Re\rightarrow S^{-1}\Re$ given by $a=a/1$ is a homomorphism. If $N$
is a hyperideal of $\Re$, then $\varphi(N)=S^{-1}N=\{\frac{i}{s}\in S^{-1}%
\Re:i\in N,s\in S\}$ is also a hyperideal of $S^{-1}\Re$ \cite{X14} . If
$S=r(\Re),$ then the hyperring $S^{-1}\Re$ is called quotient hyperring which
is denoted by $Q(\Re).$

\section{$r$-Hyperideals}

In this section, we examine some properties of $r$-hyperideals in commutative
hyperrings. We compare $r$-hyperideals with prime and maximal hyperideals.
Throughout the section, $(\Re,+,\cdot)$ is a commutative Krasner hyperring.

Our main result regarding $r$-hyperideals is the following:

\begin{theorem}
Let $\Re$ be a hyperring and $N$ be a hyperideal of $\Re$. Then the following statements
are equivalent:

a) $N$ is an $r$-hyperideal.

b) $(a\cdot\Re)\cap N=a\cdot N,$ for any $a\in r(\Re).$

c) $N=(N:a),$ for any $a\in r(\Re)\backslash N.$

d) $N=M^{c};$ where $M$ is a hyperideal of $Q(\Re).$
\end{theorem}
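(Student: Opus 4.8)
The plan is to prove the cycle $a)\Rightarrow b)\Rightarrow c)\Rightarrow d)\Rightarrow a)$, treating $N$ as a proper hyperideal throughout and writing $S=r(\Re)$ for the set of regular elements. Note $S$ is multiplicatively closed and $1\in S$, so $Q(\Re)=S^{-1}\Re$ is defined and $\varphi:\Re\to Q(\Re)$, $\varphi(x)=x/1$, is the localization homomorphism. For $a)\Rightarrow b)$, fix $a\in r(\Re)$. Since $N$ is a hyperideal, $a\cdot N\subseteq N$ and $a\cdot N\subseteq a\cdot\Re$, so $a\cdot N\subseteq(a\cdot\Re)\cap N$. For the reverse inclusion take $x\in(a\cdot\Re)\cap N$; then $x=a\cdot r$ for some $r\in\Re$ and $x\in N$, so $a\cdot r\in N$ with $ann(a)=0$, and because $N$ is an $r$-hyperideal we get $r\in N$, whence $x\in a\cdot N$.

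For $b)\Rightarrow c)$, fix $a\in r(\Re)\setminus N$. The inclusion $N\subseteq(N:a)$ is immediate from $a\cdot N\subseteq N$. Conversely, if $x\in(N:a)$ then $a\cdot x\in N$ and $a\cdot x\in a\cdot\Re$, so by b) $a\cdot x\in(a\cdot\Re)\cap N=a\cdot N$; hence $a\cdot x=a\cdot n$ for some $n\in N$. Using distributivity, $0\in a\cdot x-a\cdot n=a\cdot(x-n)$, so some $y\in x-n$ satisfies $a\cdot y=0$; since $ann(a)=0$ this gives $y=0$, and the canonical hypergroup axioms then force $x=n\in N$. Thus $(N:a)=N$.

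For $d)\Rightarrow a)$, suppose $N=M^{c}=\varphi^{-1}(M)$ for a proper hyperideal $M$ of $Q(\Re)$. Let $a\cdot b\in N$ with $ann(a)=0$, i.e. $a\in S$. Then $\varphi(a)\varphi(b)=\varphi(a\cdot b)\in M$, and since $a\in S$ the element $\varphi(a)=a/1$ is a unit of $Q(\Re)$. Multiplying by its inverse and using that $M$ absorbs products, we obtain $\varphi(b)\in M$, so $b\in\varphi^{-1}(M)=N$; hence $N$ is an $r$-hyperideal. This step is clean, as invertibility of $\varphi(a)$ does all the work.

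The remaining and most delicate step is $c)\Rightarrow d)$. The natural choice is $M=S^{-1}N$, which is a hyperideal of $Q(\Re)$, and it suffices to show $N=\varphi^{-1}(S^{-1}N)$; the inclusion $N\subseteq\varphi^{-1}(S^{-1}N)$ is clear. For the reverse, if $a/1\in S^{-1}N$ then $a/1=i/s$ with $i\in N$, $s\in S$, so there is $u\in S$ with $0\in u\cdot(s\cdot a-i)$; by distributivity and uniqueness of inverses in the canonical hypergroup this gives $(us)\cdot a=u\cdot i\in N$ with $us\in S$. If $us\notin N$, then condition c) applied to $us$ yields $a\in(N:us)=N$, as wanted. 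I expect the main obstacle to be precisely the possibility $us\in N$: condition c) as stated, quantifying only over $a\in r(\Re)\setminus N$, says nothing about regular elements lying inside $N$. To close the cycle one must know $N\cap r(\Re)=\emptyset$; this is automatic for a genuine $r$-hyperideal, since $t\in N\cap r(\Re)$ would give $t=t\cdot 1\in N$ with $ann(t)=0$, forcing $1\in N$ and contradicting properness. Establishing $N\cap r(\Re)=\emptyset$ from c) alone — equivalently, reading c) as holding for every $a\in r(\Re)$ rather than only on $r(\Re)\setminus N$ — is the point I would scrutinize most carefully, as it is exactly where the argument is at risk of breaking.
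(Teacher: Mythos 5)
Your implications $a)\Rightarrow b)$, $b)\Rightarrow c)$ and $d)\Rightarrow a)$ are correct and essentially identical to the paper's own proof; in $b)\Rightarrow c)$ you are in fact more careful than the paper, which simply asserts the cancellation ``$a\cdot x=a\cdot y$ and $ann(a)=0$ imply $x=y$,'' whereas you justify it via $0\in a\cdot(x-n)$, $ann(a)=0$, and reversibility of the canonical hypergroup.

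The gap you flag in $c)\Rightarrow d)$ is genuine, and it is not a defect of your write-up alone: the paper's proof commits exactly the oversight you warn about. After reducing to $s\cdot x\in N$ for some $s\in S=r(\Re)$, the paper concludes ``From (c), $x\in(N:s)=N$'' without verifying $s\notin N$, even though condition (c) as printed quantifies only over $r(\Re)\setminus N$. Moreover the defect cannot be argued away: condition (c) as stated is strictly weaker than the others. Take $\Re=\mathbb{Z}$, viewed as a Krasner hyperring with singleton hyperaddition, and $N=2\mathbb{Z}$. For every $a\in r(\mathbb{Z})\setminus N$, i.e. every odd nonzero integer, one has $(N:a)=N$, so (c) holds; yet $N$ contains the regular element $2$, hence is not an $r$-hyperideal ($2\cdot 1\in N$, $ann(2)=0$, $1\notin N$), fails (b) (take $a=2$: $(2\mathbb{Z})\cap N=2\mathbb{Z}\neq 4\mathbb{Z}=2\cdot N$), and fails (d), since the only hyperideals of $Q(\mathbb{Z})=\mathbb{Q}$ contract to $(0)$ and $\mathbb{Z}$. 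So $c)\Rightarrow d)$ is false as stated, not merely unproven. The repair is the one you indicate: restate (c) to hold for all $a\in r(\Re)$ (equivalently, add the requirement $N\cap r(\Re)=\emptyset$, which every proper $r$-hyperideal satisfies by your $t=t\cdot 1$ argument). With that reading, in your localization step $us\cdot x=u\cdot i\in N$ the element $us$ is automatically outside $N$, your choice $M=S^{-1}N$ works, and the cycle closes.
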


\begin{proof}
($a\Rightarrow b)$ Let $N$ be an $r$-hyperideal and $a$ be a regular element.
Suppose that $x\in$ $(a\cdot\Re)\cap N.$ Then $x\in a\cdot\Re$ and $x\in N.$
Thus, $x=a\cdot a^{^{\prime}},$ for $a^{\prime}\in\Re.$ Since $x=a\cdot
a^{^{\prime}}\in N$, $ann(a)=0$ and $N$ is an $r$-hyperideal, $a^{\prime}\in N.$
Hence, $a\cdot a^{\prime}\in a\cdot N$ and $(a\cdot\Re)\cap N\subseteq a\cdot
N.$

Therefore, for every $a\in\Re,$ $a\cdot N\subseteq(a\cdot\Re)\cap N,$
$(a\cdot\Re)\cap N=a\cdot N.$

$(b\Rightarrow c)$ We know that $N\subseteq(N:a),\ $for every $a\in\Re.$ Let
$a$ be regular, $a\notin N$ and $x\in(N:a).$ Hence $ann(a)=0$ and $x\cdot a\in
N.$ From (b), $a\cdot x\in(a\cdot\Re)\cap N$ and $a\cdot x\in a\cdot N.$ This
implies that $a\cdot x=a\cdot y,$ for $y\in N.$ Since $ann(a)=0,$ then $x=y\in
N.$ Hence, $x\in N.$ Thus, $(N:a)\subseteq N.$

$(c\Rightarrow d)$ Let $S$ be the set of regular elements and $\varphi
:\Re\rightarrow Q(\Re)$ be natural homomorphism. We know that $N\subseteq
\varphi^{-1}(M)=M^{c}$, for $M$ a hyperideal of $Q(\Re)$. Suppose $x\in$ $\varphi^{-1}(M).$ Since $\varphi
\left(  x\right)  =\frac{x}{s}\in S^{-1}\Re,$ then $s\cdot x\in N,$ for $s\in S.$
From (c), $x\in(N:s)=N.$

$(d\Rightarrow a)$ Let $a\cdot x\in N$ and $ann(a)=0.$ We have $(a\cdot
x)/1=a/1\cdot x/1\in M$ and since $a$ is regular, then there exists $1/a$ which is the
inverse of $a/1$ in $S.$ Thus $a/1\cdot x/1\cdot1/a\in M$ and $x/1\in M.$
Hence, $x\in N$ and so $N$ is an $r$-hyperideal.
\end{proof}

\begin{corollary} The following statements hold:

$a)$ The zero hyperideal is an $r$-hyperideal.

$b)$ The intersection of $r$-hyperideals is an $r$-hyperideal.

$c)$ When $N$ is an $r$-hyperideal, $N\subseteq zd(\Re).$

$d)$ Every $r$-hyperideal is a $pr$-hyperideal.

$e)$ A prime hyperideal is an $r$-hyperideal if and only if it consists all of
zerodivisors. As a result, every minimal prime hyperideal is an $r$-hyperideal.

$f)$ Let $N$ is an $r$-hyperideal, $S\subseteq\Re$ and $S\nsubseteq N$. Then
$(N:S)$ is an $r$-hyperideal. Particularly, $ann(x)$ and $ann(S)$ are always $r$-hyperideals.

$g)$ Every minimal hyperideal of a reduced hyperring is an $r$-hyperideal.

$h)$ Every pure hyperideal and every von Neumann regular hyperideal are $r$-hyperideals.

$i)$ Suppose that $\Re$ satisfies the s.a.c. and $N$ is a hyperideal of $\Re.$
$N$ is an $r$-hyperideal if and only if for every hyperideal $J$ and $K$ of
$\Re$ such that $J$ is finitely generated, whenever $J\cdot K\subseteq N$ and
$ann(J)=0,$ then $K\subseteq N.$

$j)$ The sum of two $r$-hyperideals may not be an $r$-hyperideal.
\end{corollary}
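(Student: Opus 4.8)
The plan is to prove the ten items by repeatedly reducing to the defining property of an $r$-hyperideal together with the characterizations (b)--(d) of the preceding theorem, and to treat (j) separately since it is a non-implication. I would first dispose of the formal items: for (a), if $a\cdot b\in(0)$ with $ann(a)=0$ then $a\cdot b=0$ forces $b\in ann(a)=0$; for (b), an element lying in $\bigcap_i N_i$ after multiplication by a regular element lies in each $N_i$, hence in the intersection; and (d) is immediate since $b\in N$ already yields $b^{1}\in N$. The key structural fact I would prove next and then reuse is (c): if some $x\in N$ were regular, then $x\cdot b\in N$ for every $b\in\Re$ by the hyperideal property, and the $r$-hyperideal condition would give $b\in N$ for all $b$, i.e. $N=\Re$, contradicting properness; hence $N\subseteq zd(\Re)$.

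Item (e) then follows quickly: one direction is exactly (c), and for the converse, if $P$ is prime with $P\subseteq zd(\Re)$ and $a\cdot b\in P$ with $ann(a)=0$, primeness gives $a\in P$ or $b\in P$, but $a\in P\subseteq zd(\Re)$ contradicts regularity, so $b\in P$; the minimal-prime remark then uses the standard fact that minimal prime hyperideals consist of zero divisors. For (f) I would argue elementwise: if $a\cdot b\in(N:S)$ with $ann(a)=0$, then for each $s\in S$ we have $a\cdot(b\cdot s)\in N$, so $b\cdot s\in N$ by the $r$-hyperideal property, whence $b\in(N:S)$; properness of $(N:S)$ is precisely what $S\nsubseteq N$ buys, and $ann(x)=((0):x)$, $ann(S)=((0):S)$ are then special cases using (a).

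For the regularity-flavoured items I would prove (h) first and deduce (g). If $N$ is pure and $x\cdot y\in N$ with $ann(x)=0$, purity supplies $b\in N$ with $x\cdot y=(x\cdot y)\cdot b$; distributivity gives $0\in x\cdot(y-y\cdot b)$, so some $t\in y-y\cdot b$ has $x\cdot t=0$, whence $t\in ann(x)=0$ and $0\in y-y\cdot b$, and reversibility with the uniqueness of inverses in the canonical hypergroup forces $y=y\cdot b\in N$. A von Neumann regular hyperideal is pure (from $a\in a\cdot\Re\cdot a$ write $a=a\cdot(x\cdot a)$ with $x\cdot a\in N$), hence an $r$-hyperideal; and a minimal hyperideal $M$ of a reduced hyperring satisfies $M^{2}=M$ (otherwise $M^{2}=0$ forces squares, hence all elements of $M$, to vanish), so $M$ is generated by an idempotent and is pure, giving (g). For (i), the forward direction uses the s.a.c. to choose $a\in J$ with $ann(a)=ann(J)=0$, so every $k\in K$ satisfies $a\cdot k\in J\cdot K\subseteq N$ with $a$ regular and hence $k\in N$; the converse specializes the hypothesis to the principal hyperideals $J=\langle a\rangle$ and $K=\langle b\rangle$, using $ann(\langle a\rangle)=ann(a)$.

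The main obstacle is (j), which is a genuine non-implication and therefore demands an explicit example rather than a deduction. Viewing any commutative ring as a Krasner hyperring with singleton hyperaddition, I would take $\Re=k[x,y]/(xy)$: here $(x)$ and $(y)$ are prime hyperideals contained in $zd(\Re)$, hence $r$-hyperideals by (e), yet their sum $(x)+(y)=(x,y)$ is a proper hyperideal containing the regular element $x+y$ (one checks that $(x+y)\cdot f=0$ forces $f=0$), so by (c) it cannot be an $r$-hyperideal. The only delicate points I anticipate are the hyperoperation bookkeeping in (h), namely ensuring that the passage from $0\in x\cdot(y-y\cdot b)$ to $y=y\cdot b$ genuinely invokes reversibility and the uniqueness of inverses rather than a cancellation law that need not hold, and the verification that $x+y$ is regular in the counterexample.
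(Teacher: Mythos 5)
Your proposal is essentially correct, but note a structural point first: the paper offers \emph{no proof whatsoever} for this corollary --- it is stated immediately after Theorem 3.1 and left as a consequence of that theorem and of the known ring-theoretic analogues in Mohamadian's paper on $r$-ideals \cite{X26}. So your write-up is necessarily a different route: where the paper implicitly defers to the contraction characterization $N=M^{c}$ and to the ring case, you argue almost every item directly from the definition, and you are the one actually supplying the hyperstructure-sensitive content. In particular, your treatment of (h) is exactly what the Krasner setting demands: from $x\cdot y=(x\cdot y)\cdot b$ you pass to $0\in x\cdot(y-y\cdot b)$, use $ann(x)=0$ to get $0\in y-y\cdot b$, and then invoke reversibility and uniqueness of inverses (rather than an unavailable cancellation law) to conclude $y=y\cdot b\in N$; this is the step a naive transcription of the ring proof would botch. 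Likewise your counterexample for (j), $\Re=k[x,y]/(xy)$ viewed as a Krasner hyperring with singleton hyperaddition, together with the verification that $x+y$ is regular, is a complete proof of a claim the paper merely asserts. Items (a)--(f) and (i) are handled correctly and efficiently, including the properness bookkeeping ($S\nsubseteq N$ in (f)) and the identity $ann(\langle a\rangle)=ann(a)$ needed for the converse in (i).

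Two steps deserve the same care you gave (h), since you currently assert them. In (g), the claim that a minimal hyperideal $M$ with $M^{2}=M$ is generated by an idempotent is Brauer's lemma; its ring proof ends by noting that $e^{2}-e$ annihilates a suitable $x$, but in a Krasner hyperring $e^{2}-e$ is a \emph{set}, so you must argue that $x\cdot(e^{2}-e)=x-x\ni 0$ produces some $t\in e^{2}-e$ with $x\cdot t=0$, hence $t=0$ by minimality of the annihilator ideal, hence $0\in e^{2}-e$ and $e^{2}=e$ by the canonical hypergroup axioms. The lemma does survive, but it is not automatic. Similarly, in (e) the fact that every minimal prime hyperideal consists of zero divisors is cited as standard; it holds by the usual multiplicative-set and Zorn argument transplanted to hyperrings, but since the paper also asserts it without proof, a self-contained version of your argument should include it. Neither point is a fatal gap; both close with precisely the techniques you already deploy in (h).
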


\begin{remark}
Let $K$ and $L$ be hyperideals of $Q(\Re).$ We know that $K^{c}\cdot
L^{c}\subseteq(K\cdot L)^{c}$ and $K^{c}+L^{c}\subseteq(K+L)^{c}.$ If $N$ and
$M$ are $r$-hyperideals of $\Re,$ then by Theorem 3.1(d), $N=K^{c}$ and $M=L^{c}$,
for some hyperideals $K$ and $L$ in $Q(\Re).$ It follows that:

$a)$ $N\cdot M$ is an $r$-hyperideal of $\Re$ if and only if $(K\cdot
L)^{c}\subseteq$ $K^{c}\cdot L^{c}.$(essentially $(K\cdot L)^{c}=K^{c}\cdot
L^{c}$)

$b)$ $N+M$ is an $r$-hyperideal of $\Re$ if and only if $(K+L)^{c}\subseteq$
$K^{c}+L^{c}$ (essentially $(K+L)^{c}=K^{c}+L^{c}$).
\end{remark}

\begin{lemma}
Let $\Re$ be a hyperring and $N$ be a hyperideal. Then the following statements hold:

$a)$ $N$ is an $r$-hyperideal if and only if for any $J$, $K$ hyperideals of
$\Re$ with $J\cap r(\Re)\neq\emptyset$ and $J\cdot K\subseteq N$, then
$K\subseteq N.$

$b)$ Assume that $N\subseteq zd(\Re)$ is not an $r$-hyperideal. There exist
hyperideals $J$ and $K$ such that $J\cap r(\Re)\neq\emptyset,$ $N\subsetneqq
J,$ $K$ and $J\cdot K\subseteq N.$
\end{lemma}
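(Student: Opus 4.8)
The plan is to derive both directions of (a) straight from the definition of an $r$-hyperideal, and to construct the hyperideals in (b) explicitly from the elements witnessing the failure of the $r$-property; throughout, the decisive computation is that a product of two principal hyperideals generated by $a$ and $b$ is swallowed by $N$ whenever $a\cdot b\in N$.

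For the forward implication of (a) I would assume $N$ is an $r$-hyperideal and take hyperideals $J,K$ with $J\cap r(\Re)\neq\emptyset$ and $J\cdot K\subseteq N$. Fixing a regular element $a\in J\cap r(\Re)$, for each $k\in K$ the product $a\cdot k$ is a single element (multiplication being a binary, not hyper-, operation) lying in $J\cdot K\subseteq N$; since $ann(a)=0$, the defining property of an $r$-hyperideal forces $k\in N$, whence $K\subseteq N$.

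For the converse of (a) I would assume the stated condition and check the definition. Given $a\cdot b\in N$ with $ann(a)=0$, set $J=\langle a\rangle$ and $K=\langle b\rangle$, the principal hyperideals generated by $a$ and $b$. Then $a\in J\cap r(\Re)$, so $J\cap r(\Re)\neq\emptyset$. The key step is to verify $J\cdot K\subseteq N$: every generator of $J\cdot K$ has the form $u\cdot v$ with $u\in\langle a\rangle$ and $v\in\langle b\rangle$, and after distributing over the canonical hypersums (using the distributive law of the Krasner hyperring) each resulting term is a ring- or integer-multiple of $a\cdot b$. Since $a\cdot b\in N$ and $N$, being a hyperideal, is closed under multiplication by $\Re$ and under the hyperaddition (hence also under inverses and integer multiples), all such terms and their hypersums lie in $N$; thus $J\cdot K\subseteq N$. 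The hypothesis then yields $K\subseteq N$, and since $b\in\langle b\rangle=K$ we obtain $b\in N$, as required.

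For (b) I would use that $N$ is not an $r$-hyperideal to fix witnesses $a,b\in\Re$ with $a\cdot b\in N$, $ann(a)=0$ and $b\notin N$, and set $J=N+\langle a\rangle$, $K=N+\langle b\rangle$. Since the regular element $a$ lies in $J$, we have $J\cap r(\Re)\neq\emptyset$; moreover $a\notin N$ because $N\subseteq zd(\Re)$ while $a$ is regular, so $N\subsetneqq J$, and $b\in K\setminus N$ gives $N\subsetneqq K$. Expanding $J\cdot K$ by distributivity, the cross terms $N\cdot N$, $N\cdot\langle b\rangle$ and $\langle a\rangle\cdot N$ lie in $N$ because $N$ is a hyperideal, while $\langle a\rangle\cdot\langle b\rangle\subseteq N$ by the computation already made in part (a); hence $J\cdot K\subseteq N$. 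I expect the only real obstacle to be this inclusion $\langle a\rangle\cdot\langle b\rangle\subseteq N$: one must carefully track how products distribute over canonical hypersums and confirm that the closure axioms of a hyperideal are enough to keep every term, and every hypersum of terms, inside $N$. The remaining verifications are immediate from the $r$-hyperideal definition and the hyperideal closure properties.
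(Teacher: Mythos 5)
Your proposal is correct; part (a) follows essentially the paper's argument, but your part (b) uses a genuinely different construction. For (a), both you and the paper prove the forward direction by picking a regular $a\in J\cap r(\Re)$ and applying the $r$-hyperideal definition to the products $a\cdot k$ (the paper phrases this via $a\cdot\Re\subseteq J$ and $b\cdot\Re\subseteq K$, yours is the cleaner elementwise version), and both prove the converse by feeding the principal hyperideals generated by $a$ and $b$ into the hypothesis; your careful verification that $\langle a\rangle\cdot\langle b\rangle\subseteq N$ via distributivity over hypersums is exactly the point the paper glosses over. For (b), the paper takes the colon hyperideals $J=(N:x)$ and $K=(N:J)$, where $r$ regular, $x\notin N$, $r\cdot x\in N$ witness the failure: then $J\cdot K\subseteq N$ holds automatically by definition of $(N:J)$, $r\in J\setminus N$ because $r$ is regular while $N\subseteq zd(\Re)$, and $x\in K\setminus N$ because $x\cdot(N:x)\subseteq N$; no distributivity computation is needed, and $J$, $K$ are the largest hyperideals doing the job. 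Your construction $J=N+\langle a\rangle$, $K=N+\langle b\rangle$ is smaller and more explicit; its cost is the cross-term computation $J\cdot K\subseteq N$, but that reuses the inclusion $\langle a\rangle\cdot\langle b\rangle\subseteq N$ already established in (a), making your proof more self-contained. Incidentally, your route also avoids the paper's slightly misleading remark that $x\in K$ follows from $ann(x)\neq 0$ (in fact $x\in(N:(N:x))$ holds unconditionally), so both constructions are sound, with yours trading brevity for elementarity.
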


\begin{proof}
$a)$ Let $N$ be an $r$-hyperideal, $J$ and $K$ be hyperideals of $\Re$ such that
$J\cap r(\Re)\neq\emptyset$ and $J\cdot K\subseteq N.$ Let $a\in J\cap r(\Re)$ and $b\in K$. Since $J$ and $K$ are
hyperideals, then we can take $a\cdot\Re\subseteq J$ and $b\cdot\Re\subseteq K.$ By
our assumption, $a\cdot b\cdot\Re\subseteq N.$ Since $J\cap r(\Re
)\neq\emptyset,$ let we take $ann(a)=0.$ Then $N$ is an $r$-hyperideal and so, $b\cdot
\Re\subseteq N.$ Thus, $K\subseteq N.$

Conversely, assume that $J\cap r(\Re)\neq\emptyset$ and $J\cdot
K\subseteq N, K\subseteq N.$ Let $a\cdot\Re\subseteq J$ and $b\cdot\Re\subseteq
K.$ Then $a\cdot b\cdot\Re\subseteq N.$ Let $a\in J\cap r(\Re).$ So
$ann(a)=0.$ At the same time, since $K\subseteq N$, then $b\cdot\Re\subseteq N$.

$b)$ Assume that $N\subseteq zd(\Re)$ is not an $r$-hyperideal.  Then there exist $r\in r(\Re)$ and $x\notin N$ such that $r\cdot x\in N$. We have $ann(r)=0.$ Let $J=(N:x)$
and $K=(N:J).$ It follows $r\in J.$ Since $ann(r)=0$, then $J\cap r(\Re
)\neq\emptyset$ and $r\notin N.$ Since $N\subseteq zd(\Re),$ then $ann(x)\neq 0.$
Therefore, $x\in K$. Thus, there exist $b,c\in\Re$ such that $b\cdot x\in N$
and $c\cdot J\subseteq N.$ Hence, $b\cdot x\cdot c\cdot J\subseteq N.$ Then
$x\cdot J\cdot K\subseteq N$ and $J\cdot K\subseteq N.$
\end{proof}

\begin{proposition}
$a)$ Let $\Re$ be a hyperring and $N$ be a hyperideal of $\Re$ with
$N\cap r(\Re)\neq\emptyset.$ If $J$ and $K$ are $r$-hyperideals of $\Re$ such
that $N\cdot J=N\cdot K$ or $N\cap J=N\cap K,$ then $J=K.$

$b)$ Let $\Re$ be a hyperring and $N, M$ be hyperideals of
$\Re$ with $M\cap r(\Re)\neq\emptyset.$ If $N\cdot M$ is $r$-hyperideal of $\Re
$, then $N=N\cdot M.$ In addition, $N$ is an $r$-hyperideal.
\end{proposition}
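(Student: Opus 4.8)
The plan is to reduce both parts to the ideal-theoretic characterization of $r$-hyperideals in Lemma 3.4(a), which asserts that a hyperideal $L$ is an $r$-hyperideal exactly when $A\cdot B\subseteq L$ together with $A\cap r(\Re)\neq\emptyset$ forces $B\subseteq L$. I will also use repeatedly that $N\cdot L\subseteq L$ for any hyperideal $L$, which is immediate from the absorption property $r\cdot x\in L$ for $r\in\Re$ and $x\in L$.

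For part (a) under the hypothesis $N\cdot J=N\cdot K$, observe that $N\cdot J=N\cdot K\subseteq K$ since $K$ is a hyperideal. Because $N\cap r(\Re)\neq\emptyset$ and $K$ is an $r$-hyperideal, Lemma 3.4(a) applied to the pair $(N,J)$ gives $J\subseteq K$. The symmetric argument, from $N\cdot K=N\cdot J\subseteq J$ with $J$ an $r$-hyperideal, gives $K\subseteq J$, whence $J=K$.

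For part (a) under the alternative hypothesis $N\cap J=N\cap K$, I would argue pointwise, since here I have only an intersection rather than a containment of products. Fix a regular element $a\in N\cap r(\Re)$, so $ann(a)=0$. For any $x\in J$, absorption gives $a\cdot x\in N$ (as $a\in N$) and $a\cdot x\in J$ (as $x\in J$), hence $a\cdot x\in N\cap J=N\cap K\subseteq K$. Since $K$ is an $r$-hyperideal and $ann(a)=0$, this forces $x\in K$; thus $J\subseteq K$, and exchanging $J$ and $K$ yields equality.

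For part (b), the containment $N\cdot M\subseteq N$ is automatic from absorption. For the reverse, apply Lemma 3.4(a) to the $r$-hyperideal $N\cdot M$ with the pair $(M,N)$: by hypothesis $M\cap r(\Re)\neq\emptyset$, and $M\cdot N=N\cdot M\subseteq N\cdot M$ by commutativity, so the lemma gives $N\subseteq N\cdot M$. Therefore $N=N\cdot M$, and since $N\cdot M$ is an $r$-hyperideal, so is $N$. The only delicate point throughout is the bookkeeping: in each application of Lemma 3.4(a) the factor meeting $r(\Re)$ must be taken as the first argument $A$ (namely $N$ in part (a) and $M$ in part (b)), with the conclusion drawn for the remaining factor; and the intersection case of part (a) genuinely requires the pointwise definition of $r$-hyperideal rather than the product characterization.
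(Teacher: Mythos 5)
Your proof is correct, and on the ground the paper actually covers it is essentially the paper's own argument: both reduce part (a) (product hypothesis) and part (b) to the lemma stating that $L$ is an $r$-hyperideal iff $A\cdot B\subseteq L$ with $A\cap r(\Re)\neq\emptyset$ forces $B\subseteq L$, via $N\cdot J=N\cdot K\subseteq K$ (and symmetrically) for (a), and via $M\cdot N=N\cdot M\subseteq N\cdot M$ with $M\cap r(\Re)\neq\emptyset$ for (b). The notable difference is that the paper's printed proof of (a) handles \emph{only} the hypothesis $N\cdot J=N\cdot K$ and silently ignores the alternative hypothesis $N\cap J=N\cap K$, whereas you treat it with a correct elementwise argument: fixing a regular $a\in N\cap r(\Re)$, absorption gives $a\cdot x\in N\cap J=N\cap K\subseteq K$ for every $x\in J$, so $x\in K$ since $ann(a)=0$ and $K$ is an $r$-hyperideal, and symmetry finishes it. So your write-up is strictly more complete than the paper's. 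One small inaccuracy in your closing remark: the intersection case does not \emph{genuinely require} the pointwise definition, because absorption gives $N\cdot J\subseteq N\cap J$, hence $N\cdot J\subseteq N\cap J=N\cap K\subseteq K$, and the product characterization of the lemma applies just as in the other case; your pointwise route is simply an equally valid alternative.
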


\begin{proof}
$a)$ Let $J$ and $K$ be $r$-hyperideals of $\Re$ and $N\cdot J=N\cdot K.$ From
Lemma 2, since $N\cdot J=N\cdot K\subseteq K,$ $J\subseteq K.$ Therefore
$K\subseteq J.$ Then, $J=K.$

$b)$ Let $N\cdot M$ be $r$-hyperideal of $\Re$ and $M\cap r(\Re)\neq\emptyset.$
Since $N\cdot M\subseteq M\cdot N,$ from Lemma 2, $N\subseteq N\cdot M.$
Therefore, obviously, $N\cdot M\subseteq N.$ Then $N=N\cdot M.$
\end{proof}

\begin{theorem}
Let $I_{1},I_{2},...,I_{n}$ be prime hyperideals of $\Re,$ which are not
comparable. If $\ \bigcap\limits_{i=1}^{n}I_{i}$ is an $r$-hyperideal, then
$I_{i}$ is an $r$-hyperideal, for $i=1,...,n.$
\end{theorem}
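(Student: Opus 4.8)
The plan is to reduce the statement to the characterization of prime $r$-hyperideals recorded in Corollary 3.2(e), namely that a prime hyperideal is an $r$-hyperideal precisely when it consists entirely of zerodivisors, i.e. when it is contained in $zd(\Re)$. Thus it suffices to prove that each $I_j$ satisfies $I_j \subseteq zd(\Re)$; primeness together with Corollary 3.2(e) then immediately yields that $I_j$ is an $r$-hyperideal. Fixing an index $j$, I would argue by contradiction and suppose that $I_j$ contains a regular element, i.e. that there is $a \in I_j$ with $ann(a)=0$.

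The key construction uses the incomparability hypothesis. Since the hyperideals are pairwise non-comparable, for every $i \neq j$ we have $I_i \not\subseteq I_j$, so we may choose $b_i \in I_i \setminus I_j$. Consider the single element $t = a \cdot \prod_{i \neq j} b_i$ (recall that in a Krasner hyperring the multiplication is a binary operation, so this product is an element, not a set). Because $a \in I_j$ we have $t \in I_j$. Moreover, for every $k \neq j$ the factor $b_k \in I_k$ forces $\prod_{i \neq j} b_i \in I_k$, since $I_k$ absorbs multiplication; consequently $t \in I_k$. Therefore $t \in \bigcap_{i=1}^n I_i$.

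Now the hypothesis that $\bigcap_{i=1}^n I_i$ is an $r$-hyperideal performs the cancellation: since $t = a \cdot \big(\prod_{i\neq j} b_i\big) \in \bigcap_{i=1}^n I_i$ and $ann(a)=0$, the defining property of an $r$-hyperideal forces $\prod_{i\neq j} b_i \in \bigcap_{i=1}^n I_i \subseteq I_j$. But $I_j$ is prime, so a product lying in $I_j$ drives one of its factors into $I_j$; that is, $b_i \in I_j$ for some $i \neq j$, contradicting the choice $b_i \notin I_j$. Hence $I_j$ contains no regular element, i.e. $I_j \subseteq zd(\Re)$, and Corollary 3.2(e) gives that $I_j$ is an $r$-hyperideal. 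As the index $j$ was arbitrary, every $I_i$ is an $r$-hyperideal. (The case $n=1$ is immediate, since then $\bigcap_{i=1}^n I_i = I_1$ is an $r$-hyperideal by hypothesis and the incomparability condition is vacuous.)

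The main obstacle is the middle step: one must resist trying to verify the $r$-hyperideal property of $I_j$ by directly testing $x\cdot y \in I_j$, and instead manufacture a single witness element $a\cdot\prod_{i\neq j} b_i$ that simultaneously lies in all of $I_1,\dots,I_n$, so that the $r$-hyperideal property of the intersection can be applied to the regular factor $a$. The incomparability hypothesis is used in an essential way precisely to produce factors $b_i$ outside $I_j$, which is what makes the concluding appeal to the primeness of $I_j$ yield a contradiction; without incomparability the product could legitimately lie in $I_j$ and no contradiction would arise.
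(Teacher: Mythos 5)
Your proof is correct, and its engine is the same as the paper's: use incomparability to pick $b_i \in I_i \setminus I_j$ for $i \neq j$, form a product that lands in $\bigcap_{i=1}^{n} I_i$, cancel the regular factor via the $r$-hyperideal property of the intersection, and finish with primeness of $I_j$. Where you diverge is the packaging. The paper verifies the definition of an $r$-hyperideal directly: it takes an arbitrary $r\cdot x \in I_i$ with $ann(r)=0$, picks $y \in \bigl(\prod_{j\neq i} I_j\bigr)\setminus I_i$, deduces $r\cdot x\cdot y \in \bigcap_{i=1}^{n} I_i$, cancels $r$ to get $x\cdot y \in I_i$, and concludes $x \in I_i$ by primeness. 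You instead argue by contradiction that $I_j$ can contain no regular element and then invoke the criterion that a prime hyperideal is an $r$-hyperideal if and only if it consists of zerodivisors (this is Corollary 1(e) in the paper's numbering, not 3.2(e)). Your reduction buys two things: you only have to handle regular elements lying in $I_j$ itself, rather than arbitrary products $r\cdot x \in I_j$ with $r$ regular; and your explicit construction of the $b_i$'s justifies a step the paper merely asserts, namely the existence of an element of $\prod_{j\neq i} I_j$ outside $I_i$, which indeed requires exactly the incomparability-plus-primeness argument you spell out. The cost is the dependence on Corollary 1(e), which the paper's self-contained verification avoids; both arguments are of comparable length and difficulty.
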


\begin{proof}
Let $r, x\in \Re$ such that $r\cdot x\in I_{i}$ and $ann(r)=0$. Let $y\in(\prod\nolimits_{j\neq
i}I_{j})\backslash I_{i}.$ Then, $r\cdot x\cdot y\in\bigcap\limits_{i=1}%
^{n}I_{i}$ . We get that $x\cdot y\in\bigcap\limits_{i=1}^{n}I_{i}$, since
$\bigcap\limits_{i=1}^{n}I_{i}$ is an $r$-hyperideal and $ann(r)=0.$ Thus,
$x\cdot y\in I_{i}.$ We conclude that $x\in I_{i},$ since $y\notin I_{i}$ and
$I_{i}$'s are prime. Hence, $I_{i}$ is an $r$-hyperideal.
\end{proof}

Let $\rho:\Re\rightarrow S$ be a homomorphism. We investigate whether the
image of an $r$-hyperideal and the inverse image of an $r$-hyperideal are $r$-hyperideal.

\begin{theorem}
Let $\rho:\Re\rightarrow S$ be a good epimorphism such that $Ker$
$\rho\subseteq N$. If $N$ is an $r$-hyperideal of $(\Re,+,\cdot),$ then
$\rho(N)$ is an $r$-hyperideal of the hyperring $(S,\oplus,\odot).$
\end{theorem}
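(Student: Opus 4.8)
The plan is to transfer everything back to $\Re$ through the preimage map and then invoke the $r$-hyperideal property of $N$ there. The backbone of the argument is the identity $\rho^{-1}(\rho(N))=N$, which holds precisely because $\mathrm{Ker}\,\rho\subseteq N$; once this is in hand, surjectivity lets me lift an arbitrary factorization in $S$ to one in $\Re$.

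First I would record the structural facts. Since $\rho$ is a good epimorphism, $\rho(N)$ is a hyperideal of $S$, and it is proper: if $\rho(N)=S$, then every $x\in\Re$ satisfies $\rho(x)\in\rho(N)$, whence $x\in\rho^{-1}(\rho(N))=N$ and $N=\Re$, contradicting that $N$ is proper. So the genuine content is the pullback identity. The inclusion $N\subseteq\rho^{-1}(\rho(N))$ is immediate. For the reverse, take $x$ with $\rho(x)\in\rho(N)$, say $\rho(x)=\rho(n)$ for some $n\in N$. Because $\rho$ is good it preserves sums and inverses, so $0\in\rho(x)-\rho(n)=\rho(x-n)$; hence some $w\in x-n$ lies in $\mathrm{Ker}\,\rho\subseteq N$. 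Reversibility (axiom (v) of the canonical hypergroup) then gives $x\in w+n\subseteq N$, so $x\in N$. This step leans essentially on reversibility and on $N$ being closed under the hyperaddition.

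Next comes the main implication. Suppose $s\odot t\in\rho(N)$ with $ann(s)=0$ in $S$. Using surjectivity, choose $a,b\in\Re$ with $\rho(a)=s$ and $\rho(b)=t$. Since $\rho$ is good, $\rho(a\cdot b)=\rho(a)\odot\rho(b)=s\odot t\in\rho(N)$, so $a\cdot b\in\rho^{-1}(\rho(N))=N$ by the pullback identity. If I can establish $ann(a)=0$ in $\Re$, then the $r$-hyperideal hypothesis on $N$ forces $b\in N$, and hence $t=\rho(b)\in\rho(N)$, which finishes the proof.

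The hard part is exactly this annihilator transfer. What the hypothesis directly yields is only an inclusion: if $c\cdot a=0$, then $\rho(c)\odot s=\rho(c\cdot a)=0$, so $\rho(c)\in ann(s)=0$ and $c\in\mathrm{Ker}\,\rho\subseteq N$; thus $ann(a)\subseteq\mathrm{Ker}\,\rho$, which is weaker than $ann(a)=0$. To close the gap I would try to replace $a$ by a regular preimage of $s$, noting that any $a'\in a+\mathrm{Ker}\,\rho$ still satisfies $\rho(a')=s$, so the task becomes choosing the coset representative inside $r(\Re)$; failing that, one must argue that $a\cdot b\in N$ together with $ann(a)\subseteq\mathrm{Ker}\,\rho\subseteq N$ already forces $b\in N$. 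The cleanest case is $\mathrm{Ker}\,\rho=0$, where $ann(a)=0$ outright. I expect the real obstacle to lie in controlling a nontrivial kernel, and this is the point at which the argument demands the most care.
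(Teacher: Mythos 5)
Your proposal is incomplete, and the gap is exactly the one you flagged: the transfer of regularity from $S$ back to $\Re$. Everything before that point is correct and matches the paper's own argument step for step --- the identity $\rho^{-1}(\rho(N))=N$ via reversibility and $\mathrm{Ker}\,\rho\subseteq N$, and the lifting of $s\odot t\in\rho(N)$ to $a\cdot b\in N$. But from $ann(s)=0$ you only deduce $ann(a)\subseteq\mathrm{Ker}\,\rho$, and you end by proposing, without carrying out, either a choice of regular representative in $a+\mathrm{Ker}\,\rho$ or a direct argument from $ann(a)\subseteq\mathrm{Ker}\,\rho\subseteq N$; so no proof is produced. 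You should know that the paper's proof ``closes'' this step by taking $0\neq c$ with $a\cdot c=0$ and asserting that $\rho(c)\neq 0_S$, whence $ann(s)\neq 0_S$, a contradiction. That assertion is unjustified: nothing prevents $c$ from lying in $\mathrm{Ker}\,\rho$, which the theorem allows to be nonzero. So the paper has the same hole you identified; it simply does not acknowledge it.

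In fact your worry about a nontrivial kernel is fatal: the missing step cannot be filled, because the statement as given is false. Every commutative ring is a Krasner hyperring (with singleton hyperaddition), and every surjective ring homomorphism is a good epimorphism. Take the idealization $\Re=\mathbb{Z}(+)M$ with $M=\mathbb{Z}_2\oplus\mathbb{Z}_3$, multiplication $(n,m)(n',m')=(nn',\,nm'+n'm)$, and let $\rho:\Re\rightarrow\mathbb{Z}$ be the projection $(n,m)\mapsto n$. The regular elements of $\Re$ are precisely the $(n,m)$ with $\gcd(n,6)=1$: if $p\in\{2,3\}$ divides $n$, then $(n,m)$ is annihilated by $(0,m')$ with $m'$ a generator of the $\mathbb{Z}_p$-component, and if $n=0$ it is annihilated by every $(0,m')$. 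Consequently $N=\rho^{-1}(6\mathbb{Z})=\{(n,m):6\mid n\}$ is a proper $r$-hyperideal of $\Re$ (if $\gcd(a,6)=1$ and $6\mid ab$, then $6\mid b$), and $\mathrm{Ker}\,\rho=0(+)M\subseteq N$. Yet $\rho(N)=6\mathbb{Z}$ is not an $r$-hyperideal of $\mathbb{Z}$, since $2\cdot 3\in 6\mathbb{Z}$ with $ann(2)=0$ but $3\notin 6\mathbb{Z}$. So the theorem requires a stronger hypothesis --- for instance $\mathrm{Ker}\,\rho=0$ (your ``cleanest case,'' where $\rho$ is forced to be an isomorphism), or the assumption that every regular element of $S$ lifts to a regular element of $\Re$ --- and your instinct that this is ``the point at which the argument demands the most care'' was precisely right.
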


\begin{proof}
Obviously, $\rho(N)$ is hyperideal of $S.$ Let $b_{1}\odot b_{2}\in\rho(N)$
and $ann(b_{1})=0_{S},$ for $b_{1},b_{2}\in S.$ Since $\rho$ \ is onto, then there
exist $a_{1},a_{2}\in\Re$ such that $b_{1}=\rho(a_{1})$ and $b_{2}=\rho
(a_{2})$. Then $b_{1}\odot b_{2}=\rho(a_{1})\odot\rho(a_{2})=\rho(a_{1}\cdot
a_{2})=\rho(x)\in f(N),$ for some $x\in N.$

$0\in$ $\rho(a_{1}\cdot a_{2})\ominus\rho(x)=\rho(a_{1}\cdot a_{2}-x).$ Then
there exists $t\in a_{1}\cdot a_{2}-x$ such that $\rho(t)=0.$ We have
$a_{1}\cdot a_{2}\in t+x\subseteq Ker\rho+N\subseteq N+N\subseteq N.$ Thus,
$a_{1}\cdot a_{2}\in N.$

Let $ann(a_{1})\neq0.$ Then there exists $0\neq c\in\Re$ such that $a_{1}\cdot
c=0_{\Re}.$ So, $\rho(a_{1}\cdot c)=\rho(a_{1})\odot\rho(c)=\rho(0_{\Re}%
)=0_{S}.$ Since $\rho(c)\neq0_{S}$, $ann(\rho(a_{1}))=ann(b_{1})=0_{S}$ and
this is a contradiction$.$ This implies that $ann(a_{1})=0.$ Since $N$ is
an $r$-hyperideal, then $a_{2}\in N.$ Therefore, $b_{2}=\rho(a_{2})\in\rho(N).$
\end{proof}

\begin{theorem}
Let $\rho:\Re\rightarrow S$ be a good monomorphism. If $M$ is an $r$-hyperideal
of $(S,\oplus,\odot),$ then $\rho^{-1}(M)=M^{c}$ is an $r$-hyperideal of
$(\Re,+,\cdot).$
\end{theorem}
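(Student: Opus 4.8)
The plan is to verify the defining implication for $M^{c}=\rho^{-1}(M)$ directly. First I would dispose of the routine part: the contraction of a hyperideal under a homomorphism is again a hyperideal, so $\rho^{-1}(M)$ is a hyperideal of $\Re$, and it is proper provided $M$ does not contain the whole image $\rho(\Re)$. All the substance therefore lies in the regularity condition, not in the hyperideal axioms.

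Next, fix $a,b\in\Re$ with $a\cdot b\in M^{c}$ and $ann(a)=0$; the target is $b\in M^{c}$, i.e. $\rho(b)\in M$. Since $\rho$ is a homomorphism the single-valued product is preserved, so $\rho(a)\odot\rho(b)=\rho(a\cdot b)\in M$. Because $M$ is an $r$-hyperideal of $S$, it now suffices to show that $\rho(a)$ is regular in $S$, that is $ann(\rho(a))=0_{S}$; granting this, the $r$-hyperideal property of $M$ forces $\rho(b)\in M$, hence $b\in M^{c}$, and the proof is complete.

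Thus the heart of the argument, and the step I expect to be the main obstacle, is the transfer of regularity $ann(a)=0\Rightarrow ann(\rho(a))=0_{S}$. For annihilators lying in the image this is free from injectivity: if $y=\rho(c)$ satisfies $\rho(a)\odot y=0_{S}=\rho(0)$, then $\rho(a\cdot c)=\rho(0)$, so $a\cdot c=0$ by injectivity of the good monomorphism, whence $c\in ann(a)=0$ and $y=0_{S}$. The delicate point is excluding annihilators of $\rho(a)$ lying \emph{outside} $\rho(\Re)$, and this is exactly where the good monomorphism hypothesis must do real work; I would concentrate the argument here, arguing by contradiction as in the epimorphism case but exploiting injectivity in place of surjectivity, and I would watch carefully whether that hypothesis alone controls the extra zero divisors of $S$ or whether an additional assumption on $\rho$ (so that regular elements of $\Re$ are forced to regular elements of $S$) is really needed.
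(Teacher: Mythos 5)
Your proposal is not a complete proof. The reduction to showing $ann(\rho(a))=0_{S}$ is correct, and your treatment of annihilators lying inside the image $\rho(\Re)$ (where injectivity does the work) is also correct; but you then stop at the decisive step --- excluding annihilators of $\rho(a)$ that lie \emph{outside} $\rho(\Re)$ --- and only announce that you would concentrate the argument there. Since everything else is routine, that unfinished step is the entire content of the theorem, so as written the proposal has a genuine gap at exactly that point.

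However, your closing suspicion that ``an additional assumption on $\rho$ is really needed'' is correct, and it is the more valuable observation. The paper's own proof fails at precisely the step you flagged: it argues that if $ann(\rho(r))\neq 0$ then ``there exists $0\neq\rho(s)\in M$ such that $\rho(r)\odot\rho(s)=0_{S}$,'' i.e.\ it silently assumes that a nonzero annihilator of $\rho(r)$ can be chosen inside the image of $\rho$ (indeed inside $M$) --- which injectivity cannot deliver; that is what surjectivity provides in the preceding epimorphism theorem. In fact the statement is false as it stands. Take $\Re=\mathbb{Z}$ and $S=\mathbb{Z}\times\mathbb{Z}_{2}$, regarded as Krasner hyperrings with singleton hyperaddition (so every injective ring homomorphism is a good monomorphism), let $\rho(n)=(n,\bar{n})$, and let $M=\mathbb{Z}\times\{\bar{0}\}=ann_{S}((0,\bar{1}))$. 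Annihilator hyperideals are always $r$-hyperideals (Corollary 1(f)), yet $\rho^{-1}(M)=2\mathbb{Z}$ is not an $r$-hyperideal of $\mathbb{Z}$: we have $2\cdot 1\in 2\mathbb{Z}$ and $ann(2)=0$, while $1\notin 2\mathbb{Z}$. The failure is exactly the non-transfer of regularity: $2$ is regular in $\mathbb{Z}$, but $\rho(2)=(2,\bar{0})$ is a zero divisor in $S$. So neither your outline nor the paper's argument can be completed as stated; the theorem needs an extra hypothesis, for instance that $\rho$ carries regular elements of $\Re$ to regular elements of $S$, under which your reduction finishes the proof immediately.
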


\begin{proof}
$\rho^{-1}(M)$ is a hyperideal \cite{X28}. Let $r\cdot x\in\rho^{-1}(M)$ and
$ann(r)=0.$ Then $\rho(r)\odot f(x)=\rho(r\cdot x)\in M.$ If $ann(\rho(r))\neq0,$ then
there exists $0\neq\rho(s)\in M$ such that $\rho(r)\odot\rho(s)=\rho(r\cdot
s)=0_{S}.$ This means that there exists a $0\neq s\in\rho^{-1}(M)$ such that
$r\cdot s=0_{\Re}.$ Then, $ann(r)\neq0$. This is a contradiction, so
$ann(\rho(r))=0.$ Since $M$ is an $r$-hyperideal, then $\rho(x)\in M$ and therefore
$x\in\rho^{-1}(M).$
\end{proof}

\begin{theorem}
Let $\Re$ be a hyperring. The following statements are equivalent:

$a)$ $\Re$ is a hyperdomain.

$b)$ The only $r$-hyperideal of $\Re$ is the zero hyperideal.

$c)$ $ann(x\cdot y)=$ $ann(x)\cup ann(y),$ for each $x,y\in\Re.$
\end{theorem}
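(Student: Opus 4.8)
The plan is to prove the three equivalences through the cycle $(a)\Rightarrow(b)\Rightarrow(c)\Rightarrow(a)$, leaning almost entirely on the annihilator facts already recorded in Corollary 3.2. Throughout I read ``hyperdomain'' as a commutative Krasner hyperring with identity $1\neq 0$ and no nonzero zero divisors, so that $zd(\Re)=\{0\}$ and $ann(x)=0$ for every $x\neq 0$. The first implication $(a)\Rightarrow(b)$ is then almost immediate: if $\Re$ is a hyperdomain then $zd(\Re)=\{0\}$, and since every $r$-hyperideal $N$ satisfies $N\subseteq zd(\Re)$ by Corollary 3.2(c), we get $N=\{0\}$; as the zero hyperideal is itself an $r$-hyperideal by Corollary 3.2(a), it is the unique one.

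For $(b)\Rightarrow(c)$ I would first use (b) to recover the domain property. If some $x\neq 0$ were a zero divisor, then $ann(x)$ would be a nonzero hyperideal, proper because $1\cdot x=x\neq 0$ forces $1\notin ann(x)$, and an $r$-hyperideal by Corollary 3.2(f); this contradicts (b). Hence $ann(x)=0$ for all $x\neq 0$. Now fix $x,y\in\Re$. The inclusion $ann(x)\cup ann(y)\subseteq ann(x\cdot y)$ always holds, since $rx=0$ (or $ry=0$) forces $r\cdot(xy)=0$ by associativity and commutativity. For the reverse inclusion take $r\in ann(xy)$: if $y=0$ then $r\in ann(y)$, while if $y\neq 0$ then $(rx)y=0$ with $y$ regular forces $rx=0$, i.e. $r\in ann(x)$. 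This yields the claimed equality.

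Finally $(c)\Rightarrow(a)$. Suppose for contradiction that $x\cdot y=0$ with $x,y\neq 0$. Then $ann(x\cdot y)=ann(0)=\Re$, so by (c) we have $\Re=ann(x)\cup ann(y)$. With an identity the contradiction is immediate: $1\in ann(x)\cup ann(y)$ gives $1\cdot x=0$ or $1\cdot y=0$, i.e. $x=0$ or $y=0$. More generally I would invoke the hyperring analogue of the fact that a hyperideal is never the union of two incomparable hyperideals, provable directly from reversibility of the canonical hypergroup (if $a\in ann(x)\setminus ann(y)$ and $b\in ann(y)\setminus ann(x)$, pick $c\in a+b$; reversibility places $c$ in neither piece), to conclude $ann(x)=\Re$ or $ann(y)=\Re$ and hence $x=0$ or $y=0$. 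Either way this contradicts $x,y\neq 0$, so $\Re$ has no nonzero zero divisors and is a hyperdomain.

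The main obstacle is this last implication: hypothesis (c) equates a genuine hyperideal, $ann(x\cdot y)$, with a mere set-theoretic union of two hyperideals, and the argument must exploit either the identity or reversibility to force one annihilator to absorb the other. Everything else reduces to the annihilator corollaries (parts (a), (c), (f) of Corollary 3.2) together with the cancellation that becomes available once $\Re$ is known to have no nonzero zero divisors.
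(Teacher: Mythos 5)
Your proposal is correct and follows essentially the same route as the paper: the cycle $(a)\Rightarrow(b)\Rightarrow(c)\Rightarrow(a)$, using the corollary facts that $ann(x)$ is an $r$-hyperideal and that $r$-hyperideals sit inside $zd(\Re)$, and exploiting the identity element $1_\Re\in ann(x)\cup ann(y)$ for $(c)\Rightarrow(a)$. Your write-up is in fact somewhat more careful than the paper's (you handle the $x\cdot y=0$ and $y=0$ cases in $(b)\Rightarrow(c)$ explicitly, and you note the reversibility-based alternative for the union of hyperideals), but the underlying argument is the same.
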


\begin{proof}
($a)\Rightarrow(b)$ Let us suppose that $\Re$ is hyperdomain and $(0)\neq N$ is a
proper hyperideal of $\Re.$ Then, there exists $0\neq\Re\in N$. Since $\Re$ is
a hyperdomain, then we have $ann(r)=0$. This contradicts the ideal being proper.
Then the zero ideal is the only $r$-hyperideal.

($b)\Rightarrow(c)$ From Corollary 1(f), $ann(x)$ is an $r$-hyperideal. Assume that
the zero hyperideal is the only $r$-hyperideal of $\Re.$ Hence, $ann(x)=0.$ So
$ann(x\cdot y)=$ $ann(x)\cup ann(y)=0,$ for every $x,y\in\Re.$

($c)\Rightarrow(a)$ Let $x\cdot y=0,$ $x,y\in\Re.$ Thus, $ann(x\cdot
y)=ann(0)=$ $ann(x)\cup ann(y)=\Re$. This means $1_{\Re}\in ann(x)\cup
ann(y).$ Then $x=\{0\}$ or $y=\{0\}.$ Hence, $\Re$ is a hyperdomain.
\end{proof}

\begin{proposition}
Let $\Re$ be a hyperring and $\{e_{i}:i\in\Lambda\}$ be a set of idempotents
of $\Re$. If $N$ is proper and $N=\sum\limits_{i\in\Lambda}e_{i}\cdot\Re$, then
$N$ is an $r$-hyperideal.
\end{proposition}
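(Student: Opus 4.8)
The plan is to prove that $N$ is a pure hyperideal and then to invoke Corollary 1(h), according to which every pure hyperideal is an $r$-hyperideal; since $N$ is assumed proper, this finishes the argument. Recall that $N$ is pure means that for each $a\in N$ there is some $b\in N$ with $a=a\cdot b$. As a warm-up and the clean core of the argument, I would first dispose of the single-generator case: if $N=e\cdot\Re$ for one idempotent $e$, then any $a\in N$ has the form $a=e\cdot r$, and taking $b=e\in N$ gives $a\cdot b=e\cdot(e\cdot r)=e^{2}\cdot r=e\cdot r=a$, so $e\cdot\Re$ is automatically pure.

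For the general $N=\sum_{i\in\Lambda}e_{i}\cdot\Re$, fix $a\in N$. By definition of the hypersum, $a$ lies in a finite hypersum $x_{1}+\cdots+x_{k}$ with $x_{j}=e_{i_{j}}\cdot r_{j}\in e_{i_{j}}\cdot\Re$. The idea is to replace the finitely many idempotents $e_{i_{1}},\dots,e_{i_{k}}$ by a single idempotent $g\in N$ acting as an identity on each of them, i.e.\ $g\cdot e_{i_{j}}=e_{i_{j}}$ for every $j$. Granting such a $g$, each generator satisfies $g\cdot x_{j}=g\cdot e_{i_{j}}\cdot r_{j}=e_{i_{j}}\cdot r_{j}=x_{j}$, hence $x_{j}\in g\cdot\Re$; since $g\cdot\Re$ is a hyperideal it is closed under the hyperaddition, so $x_{1}+\cdots+x_{k}\subseteq g\cdot\Re$ and in particular $a\in g\cdot\Re$. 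Writing $a=g\cdot s$ then yields $a\cdot g=g\cdot(g\cdot s)=g\cdot s=a$ with $g\in N$, which proves purity.

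The main obstacle is exactly the construction of this idempotent $g$ inside the hypersum. In a commutative ring one simply sets $g=1-\prod_{j=1}^{k}(1-e_{i_{j}})$, which is idempotent, satisfies $g\cdot e_{i_{j}}=e_{i_{j}}$, and visibly lies in $\sum_{j}e_{i_{j}}\cdot\Re$. In a Krasner hyperring this formula is not literal, because each $1-e_{i_{j}}$ is a hyperset rather than a single element, so I would build $g$ by induction on $k$, at each step orthogonalizing the new idempotent against the previous one by choosing a suitable element of a hypersum of the form $g_{\mathrm{old}}+\big(f-g_{\mathrm{old}}\cdot f\big)$ and verifying, via the reversibility axiom (v), that the chosen element is genuinely idempotent and restricts to the identity on the relevant generators. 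Establishing that a good element of these hypersums exists, and not merely that the hypersums contain $0$ in the right places, is the delicate point that must be handled with care.

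Finally, as an alternative to the purity route one can argue the $r$-hyperideal property directly and thereby showcase reversibility: if $a\cdot b\in N$ with $\mathrm{ann}(a)=0$, then as above $a\cdot b\in g\cdot\Re$ for a suitable idempotent $g\in N$, so $g\cdot(a\cdot b)=a\cdot b$ and hence $a\cdot(g\cdot b)=a\cdot b$ by commutativity. Consequently $0\in a\cdot\big((g\cdot b)-b\big)$, so regularity of $a$ forces $0\in(g\cdot b)-b$, and reversibility then gives $g\cdot b=b$, that is $b\in g\cdot\Re\subseteq N$. Either route reduces the whole statement to the single combinatorial lemma about merging finitely many idempotents, which is the step I expect to require the most care.
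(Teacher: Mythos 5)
Your proposal is not a complete proof: both of your routes (purity plus Corollary~1(h), and the direct argument) rest entirely on the ``merging lemma''---the existence, for finitely many idempotents $e_{i_1},\dots,e_{i_k}$, of an idempotent $g\in N$ with $g\cdot e_{i_j}=e_{i_j}$ for all $j$---and you never prove this lemma; you only sketch an inductive plan and yourself flag it as the delicate point. This is exactly where the hyperstructure bites, and it is not a routine verification. Already for two idempotents $e,f$, the natural candidate set $e+f-e\cdot f$ need not contain an element with the required properties: for $g\in e+f-e\cdot f$, distributivity and commutativity only give $g\cdot e\in e+e\cdot f-e\cdot f$, a set which \emph{contains} $e$ (because $e\cdot f-e\cdot f\ni 0$ and $e+0=\{e\}$) but may contain many other elements, and nothing forces the particular element $g\cdot e$ to equal $e$; reversibility yields memberships of the form $0\in\cdots$, never the on-the-nose equalities your argument needs. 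So your argument reduces the proposition to an unproven, and in this generality not obviously true, statement. The pieces you do carry out---the single-generator case, and the cancellation $a\cdot(g\cdot b)=a\cdot b\Rightarrow 0\in a\cdot(g\cdot b-b)\Rightarrow g\cdot b=b$ from $ann(a)=0$---are correct, but they are all conditional on $g$.

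The paper's own proof avoids idempotent merging altogether, and that is what lets it close. Given $r\cdot x\in N$ with $ann(r)=0$, it places $r\cdot x$ in a finite subsum $\sum_{k=1}^{n}e_{i_k}\cdot r_{i_k}$, takes $y\in\prod_{k=1}^{n}(1-e_{i_k})$, argues $r\cdot x\cdot y=0$ and cancels $r$ to get $x\cdot y=0$, and then uses only the expansion of the product: $y\in 1-s$ for \emph{some} $s\in N$. That expansion asks nothing of $s$ beyond membership in $N$ (it follows from distributivity and the closure of $N$ under the hyperoperations---no idempotency, no identity-action), and then $0=x\cdot y\in x\cdot(1-s)=x-x\cdot s$ forces $x=x\cdot s\in N$. (The paper is itself somewhat loose about set-versus-element issues at the step $r\cdot x\cdot y=0$, but its structural device genuinely needs far less than yours.) If you want to salvage your write-up, replace your merging lemma by this much weaker expansion statement: it is provable by a one-line induction, whereas constructing a common identity-acting idempotent inside a hypersum is an open obstacle in your argument.
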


\begin{proof}
Let $r\cdot x\in N$ and $ann(r)=0.$ We get that $r\cdot x\in\sum
\limits_{k=1}^{n}e_{i_{k}}r_{i_{k}}$, since $N=\sum\limits_{i\in\Lambda}%
e_{i}\cdot\Re,$ for $i_{1},...,i_{n}\in\Lambda$ and $r_{i_{1}},...,r_{i_{k}%
}\in\Re.$ Let $y=\prod\limits_{k=1}^{n}(1-e_{i_{k}}).$ Since $r\cdot x\cdot
y=0$ and $ann(r)=0,$ then $x\cdot y=0.$ Thus there exists $s\in N$ such that
$y\in1-s.$ If $x\cdot y\in x\cdot(1-s)=0,$ then $x=x\cdot s\in N.$
\end{proof}

\begin{proposition}
$a)$ Let $\Re$ be a hyperring and $x,y\in\Re$ with $1_{\Re}\in x+y.$ Then
$N=ann(x)+ann(y)$ is an $r$-hyperideal.

$b)$ Let $\Re$ be a reduced hyperring, $I\in Min(\Re)$ and $e\in\Re$ be an
idempotent element. We infer that $N=I+ann(e)$ is an $r$-hyperideal.
\end{proposition}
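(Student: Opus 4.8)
The plan is to reduce each part to a hyperideal already known to be an $r$-hyperideal: for $a)$ to the annihilator $ann(x\cdot y)$ (an $r$-hyperideal by Corollary 1(f)), and for $b)$ to the minimal prime $I$ itself (an $r$-hyperideal by Corollary 1(e)).

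For part $a)$, I would prove the set identity $N=ann(x)+ann(y)=ann(x\cdot y)$. The inclusion $ann(x)+ann(y)\subseteq ann(x\cdot y)$ is routine: from $t\cdot x=0$ we get $t\cdot(x\cdot y)=(t\cdot x)\cdot y=0$, so $ann(x)\subseteq ann(x\cdot y)$ and symmetrically $ann(y)\subseteq ann(x\cdot y)$, and since $ann(x\cdot y)$ is a hyperideal it absorbs the hypersum $ann(x)+ann(y)$. The reverse inclusion is where the hypothesis $1_{\Re}\in x+y$ is used and is the crux of $a)$: given $t\in ann(x\cdot y)$, multiplying $1_{\Re}\in x+y$ by $t$ and applying distributivity (axiom 3 of a Krasner hyperring) gives $t=t\cdot 1_{\Re}\in t\cdot(x+y)=t\cdot x+t\cdot y$; since $(t\cdot x)\cdot y=0$ we have $t\cdot x\in ann(y)$, and likewise $t\cdot y\in ann(x)$, so $t\in t\cdot x+t\cdot y\subseteq N$. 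Once $N=ann(x\cdot y)$ is established, Corollary 1(f) finishes the proof (noting that $x\cdot y\neq 0$, so that $N$ is proper).

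For part $b)$, since an $r$-hyperideal is proper, it suffices to show that $N=I+ann(e)$ actually equals $I$. First I would extract from the idempotent a convenient annihilator element: from $e\cdot(1_{\Re}-e)=e-e\ni 0$ and distributivity there is some $w_{0}\in 1_{\Re}-e$ with $e\cdot w_{0}=0$, hence $w_{0}\in ann(e)$, and by reversibility (axiom v of the canonical hypergroup) $w_{0}\in 1_{\Re}-e$ yields $1_{\Re}\in e+w_{0}$. If $e\in I$ this would force $1_{\Re}\in e+w_{0}\subseteq I+ann(e)=N$, making $N=\Re$ improper; since $N$ must be proper, we conclude $e\notin I$.

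With $e\notin I$ in hand, the inclusion $ann(e)\subseteq I$ follows from primeness of $I$: for $t\in ann(e)$ we have $t\cdot e=0\in I$, and $e\notin I$ forces $t\in I$. Therefore $N=I+ann(e)=I$, which is an $r$-hyperideal by Corollary 1(e); the reduced hypothesis is what licenses this last citation and keeps the idempotent behaviour under control. I expect the main obstacle of $b)$ to be the careful treatment of $1_{\Re}-e$ as a hyperset — one cannot assume $e\cdot(1_{\Re}-e)=\{0\}$ as in the classical case, but must single out a representative $w_{0}$ with $e\cdot w_{0}=0$ and then use reversibility to relocate $1_{\Re}$ into $e+w_{0}$ — while for $a)$ the only delicate step is the analogous distributivity-plus-reversibility manipulation that places $t$ inside $t\cdot x+t\cdot y$.
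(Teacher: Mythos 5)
Your proposal is correct in its substance, but both parts take a genuinely different route from the paper's. For part $a)$, the paper verifies the defining implication directly: from $a\cdot b\in N$ with $ann(a)=0$ it writes $a\cdot b\in r+s$ with $r\in ann(x)$, $s\in ann(y)$, multiplies by $x\cdot y$ to get $a\cdot b\cdot x\cdot y=0$, cancels $a$, and concludes $b\in b\cdot(x+y)\subseteq ann(y)+ann(x)=N$. You instead prove the set identity $N=ann(x\cdot y)$ and quote Corollary 1(f); the computational core (distributivity against $1_{\Re}\in x+y$) is the same, but your version is more structural and the identity $N=ann(x\cdot y)$ is a stronger fact worth recording. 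For part $b)$, the paper again argues directly, using the minimal-prime property in reduced hyperrings (for $a\in I$ there is $y\notin I$ with $a\cdot y=0$) to reach $e\cdot x\in I$, and then the decomposition $x\in e\cdot x+x\cdot(1_{\Re}-e)\subseteq I+ann(e)$ --- a step that is actually delicate in a hyperstructure, since an element of $x\cdot(1_{\Re}-e)$ need not individually annihilate $e$ (only $0\in x\cdot e-x\cdot e$ is guaranteed). Your route --- collapse $N$ to $I$ via $ann(e)\subseteq I$ and quote Corollary 1(e) --- is shorter, sidesteps that delicate step entirely (your extraction of a representative $w_{0}\in 1_{\Re}-e$ with $e\cdot w_{0}=0$ and the use of reversibility is exactly the right hyperstructure-aware handling), and it shows that reducedness plays no role beyond what Corollary 1(e) already encodes.

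The one genuine weak point is how you handle properness, in both parts. In $a)$ you assert ``$x\cdot y\neq 0$'' as if it were a fact; it does not follow from $1_{\Re}\in x+y$ (take $x=1_{\Re}$, $y=0$, or $x=\hat{3}$, $y=\hat{4}$ in $\mathbb{Z}_{6}$), and when $x\cdot y=0$ your identity gives $N=ann(0)=\Re$, so the proposition itself fails. In $b)$ your derivation of $e\notin I$ from ``$N$ must be proper'' is circular as a proof of the stated claim: properness is part of what ``$r$-hyperideal'' means, hence part of the conclusion, not a given; and indeed for $e\in I$ (for instance $e=0$, or $I=\{\hat{0},\hat{3}\}$ and $e=\hat{3}$ in $\mathbb{Z}_{6}$) one gets $N=\Re$ and the proposition is false. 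What your arguments really prove is the corrected statement ``if $N$ is proper, then $N$ is an $r$-hyperideal,'' with properness equivalent to $x\cdot y\neq 0$ in $a)$ and to $e\notin I$ in $b)$. To be fair, the paper's own proofs silently ignore properness in exactly the same way, so this is a defect of the proposition rather than of your strategy --- but you should state these conditions as hypotheses you are adding, not as facts you have derived.
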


\begin{proof}
$a)$ Let $a\cdot b\in N$ and $ann(a)=0.$ Then, there exists $r\in ann(x)$ and
$s\in ann(y)$ such that $a\cdot b\in r+s.$ Obviously, $r\cdot x=0$ and $s\cdot
y=0.$ We get that $a\cdot b\cdot x\cdot y\in(r+s)\cdot(x\cdot y)=r\cdot(x\cdot
y)+s\cdot(x\cdot y)=y\cdot(x\cdot r)+x\cdot(y\cdot s)=b\cdot0+x\cdot0=0$ and
by our assumption, $b\cdot x\cdot y=0$. Thus, $b\cdot x\in ann(y)$ or $b\cdot
y\in ann(x).$ Hence, $b=b\cdot 1_{\Re}\in b\cdot(x+y)=(b\cdot x)+(b\cdot
y)\subseteq ann(x)+ann(y)=N$ and therefore $b\in N.$

$b)$ Let $r\cdot x\in N$ and $ann(r)=0.$ We have $r\cdot x\in a+b$ such that
$a\in I$ and $b\in ann(e).$ Since $I\in Min(\Re),$ then $I$ is an $r$-hyperideal. There
is $y\notin I$ such that $a\cdot y=0.$ $e\cdot y\cdot r\cdot x\in e\cdot
y\cdot(a+b)=(e\cdot y\cdot a)+(e\cdot y\cdot b)=0.$ Since $\Re$ is a reduced
hyperring, then every idempotent element is $0.$ By our assumption, $e\cdot y\cdot
x\in I.$ Since $y\notin I,$ then $e\cdot x\in I.$ We write $x\in x+(e\cdot
x)-(e\cdot x),$ $x\in(e\cdot x)+x-(e\cdot x)=e\cdot x+x\cdot(1_{\Re
}-e)\subseteq I+ann(e)=N.$ Then $N$ is an $r$-hyperideal.
\end{proof}

\begin{corollary}
a) If there is a hyperideal $K$ of $\Re$ such that $ann(N)+ann(M)=ann(K),$ for
$N,M$ hyperideals, then $ann(N)+ann(M)$ is $r$-hyperideal.

b) The direct sum of $r$-hyperideals is $r$-hyperideal. Therefore, if $N=M\oplus
K$ and $N$ is $r$-hyperideal, then $M$ and $K$ are $r$-hyperideals

c) Let $\Re$ be a reduced hyperring. Then $soc(\Re)$ is an $r$-hyperideal.
Therefore, there is a  hyperideal $M$ of $\Re$ such that $soc(\Re)=M^{c}%
.$
\end{corollary}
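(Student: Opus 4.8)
For part (a) the plan is to match the hypothesis to an already-proved fact. By Corollary 1(f), the annihilator of any subset of $\Re$ is an $r$-hyperideal; in particular $ann(K)$ is an $r$-hyperideal. Since we are handed a hyperideal $K$ with $ann(N)+ann(M)=ann(K)$, the hyperideal $ann(N)+ann(M)$ \emph{is} $ann(K)$, hence an $r$-hyperideal. No computation is required: the entire content is the observation that the given equality places $ann(N)+ann(M)$ inside the class covered by Corollary 1(f).

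For part (c) I would argue directly and independently of (b). In a reduced hyperring the socle decomposes as $soc(\Re)=\oplus_{i\in A}e_i\cdot\Re=\sum_{i\in A}e_i\cdot\Re$, which is exactly the shape $\sum_{i\in\Lambda}e_i\cdot\Re$ treated in the earlier Proposition on idempotent-generated sums (Proposition 2). That Proposition therefore gives at once that $soc(\Re)$ is an $r$-hyperideal, provided it is proper, which I would state explicitly since properness is part of the definition of an $r$-hyperideal. The second assertion is then immediate from Theorem 3.1(d): every $r$-hyperideal is the contraction of some hyperideal of $Q(\Re)$, so there is a hyperideal $M$ of $Q(\Re)$ with $soc(\Re)=M^c$.

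The substantive part is (b), and the first step is to fix the meaning of $M\oplus K$: I read it as a direct sum in which $M$ and $K$ are orthogonal hyperideals (the external direct sum $M\oplus K\subseteq \Re_1\oplus\Re_2$ being the clean model), so that for an element $a$ with components $a_1,a_2$ one has $ann(a)=ann(a_1)\oplus ann(a_2)$; in particular $a$ is regular if and only if both $a_1$ and $a_2$ are. Establishing this splitting of the annihilator is the small preparatory lemma everything rests on. With it, the forward implication is a componentwise reading of the definition: if $a\cdot b\in M\oplus K$ with $ann(a)=0$, then $a_1 b_1\in M$ and $a_2 b_2\in K$ with $a_1,a_2$ regular, so the $r$-hyperideal property of $M$ and of $K$ forces $b_1\in M$ and $b_2\in K$, i.e. $b\in M\oplus K$.

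For the converse (the ``therefore'' clause), suppose $N=M\oplus K$ is an $r$-hyperideal and take $a_1,b_1$ in the first factor with $a_1 b_1\in M$ and $ann(a_1)=0$. The key device is to lift to $\Re$ using a regular element $u$ of the complementary factor — the identity works, since $ann(1)=0$ — so that the element with components $(a_1,u)$ is regular, while its product with $(b_1,0)$ has first component $a_1 b_1\in M$ and second component $0\in K$, hence lies in $M\oplus K$. Applying the $r$-hyperideal property of $N$ yields $(b_1,0)\in M\oplus K$, so $b_1\in M$; by symmetry $K$ is an $r$-hyperideal as well. I expect this converse to be the main obstacle: it is not a formal consequence of the forward direction, it requires each factor to contain a regular element (equivalently an identity), and it depends on the annihilator genuinely splitting so that regularity can be tested componentwise. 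The only remaining routine point, which I would flag rather than belabor, is that $M$ and $K$ must be proper for them to qualify as $r$-hyperideals, which follows from $N=M\oplus K$ being proper.
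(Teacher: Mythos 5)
Your parts (a) and (c) are correct and coincide with what the paper (which states this corollary without any proof) evidently intends: (a) is exactly the observation that $ann(N)+ann(M)$ \emph{is} $ann(K)$, which is an $r$-hyperideal by Corollary 1(f); and (c) follows from the paper's definition $soc(\Re)=\oplus_{i\in A}e_{i}\Re$ for a reduced hyperring together with Proposition 2, with Theorem 1(d) then supplying $soc(\Re)=M^{c}$. Your insistence that properness of $soc(\Re)$ must be assumed is a legitimate point the paper suppresses.

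Part (b) is where the real issue lies, and your passage to the external model is not a harmless normalization: it changes the content of the statement. Read externally ($\Re=\Re_{1}\oplus\Re_{2}$, $M$ an $r$-hyperideal of $\Re_{1}$, $K$ of $\Re_{2}$), your proof is correct, and you rightly identify the splitting $ann((a_{1},a_{2}))=ann(a_{1})\oplus ann(a_{2})$ as the crux. But the paper's notation ($N,M,K$ all hyperideals of the single hyperring $\Re$) and the grouping with (c), where $soc(\Re)$ is an \emph{internal} direct sum of hyperideals of $\Re$, point to the internal reading, and under that reading the first clause of (b) is false. Take $\Re=\mathbb{Z}[x,y]/(xy)$ (a ring, hence a Krasner hyperring with singleton hyperaddition), $M=ann(x)=(y)$ and $K=ann(y)=(x)$: both are $r$-hyperideals by Corollary 1(f), and $M\cap K=0$, yet $M\oplus K=(x,y)$ is proper and contains the regular element $x+y$ (one checks $ann(x+y)=0$ in $\Re$), so by Corollary 1(c) it cannot be an $r$-hyperideal. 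This example also shows that for internal orthogonal hyperideals the annihilator does \emph{not} split: $x+y$ is regular although both of its components are zero divisors. Furthermore, your device for the converse clause—lifting by a regular element of the complementary factor—is unavailable internally: if $N=M\oplus K$ is an $r$-hyperideal then $K\subseteq N\subseteq zd(\Re)$, so $K$ contains no regular element at all. The internal converse is nevertheless true, but by a different argument, namely uniqueness of the direct-sum decomposition: if $a\cdot b\in M$ with $ann(a)=0$, then $b\in N$, so $b\in m+k$ with $m\in M$, $k\in K$; then $a\cdot b\in a\cdot m+a\cdot k$ and also $a\cdot b\in a\cdot b+0$ with $a\cdot b\in M$ and $0\in K$, so uniqueness forces $a\cdot k=0$, whence $k=0$ and $b=m\in M$.

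In summary: your write-up is self-consistent only because you derive (c) from Proposition 2 rather than from (b)—indeed, internally it is the idempotent generation of the minimal hyperideals of a reduced hyperring, not any general direct-sum principle, that saves the socle. As a proof of the paper's (b), however, you should either state explicitly that you are proving the external version (the only version that is true in both directions), or replace the identity-lift in the converse by the uniqueness argument above and record the counterexample showing the internal forward clause fails.
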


\begin{proposition}
Let $N$ be a hyperideal of $\Re.$ $N$ is a $pr$-hyperideal if and only if
$\sqrt{N}$ is $r$-hyperideal.
\end{proposition}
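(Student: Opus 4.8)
The plan is to prove the two implications separately, deriving each directly from the defining property of the relevant hyperideal; the only auxiliary inputs I need are that $\sqrt{N}$ is a hyperideal and that regularity is inherited by powers.

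First I would record two preliminary facts. The first is that $\sqrt{N}$ is a hyperideal of $\Re$; in the Krasner setting this is the one point that requires a little care, since closure under the hyperaddition amounts to checking that whenever $x^{m}\in N$ and $y^{k}\in N$ every element of $x-y$ has some power in $N$, which follows from distributing a high enough power over the hypersum. I would invoke this as a known property of the hyperradical defined above. The second fact is: if $ann(a)=0$, then $ann(a^{n})=0$ for all $n\in\mathbb{N}$. This I would prove by induction on $n$: if $a^{n}\cdot c=0$, then $a\cdot(a^{n-1}\cdot c)=0$, so $a^{n-1}\cdot c\in ann(a)=0$, and the inductive hypothesis forces $c=0$. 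I would also use that, since $(\Re,\cdot)$ is a commutative semigroup, $(a\cdot b)^{n}=a^{n}\cdot b^{n}$ for every $n$.

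For the forward implication, assume $N$ is a $pr$-hyperideal and take $a,b\in\Re$ with $a\cdot b\in\sqrt{N}$ and $ann(a)=0$. By definition of $\sqrt{N}$ there is an $n$ with $(a\cdot b)^{n}\in N$, i.e. $a^{n}\cdot b^{n}\in N$. By the preliminary fact $ann(a^{n})=0$, so applying the $pr$-property with the regular element $a^{n}$ and the element $b^{n}$ yields $(b^{n})^{k}\in N$ for some $k$, that is $b^{nk}\in N$. Hence $b\in\sqrt{N}$, and $\sqrt{N}$ is an $r$-hyperideal.

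For the converse, assume $\sqrt{N}$ is an $r$-hyperideal and take $a,b\in\Re$ with $a\cdot b\in N$ and $ann(a)=0$. Since $N\subseteq\sqrt{N}$ we have $a\cdot b\in\sqrt{N}$, and as $\sqrt{N}$ is an $r$-hyperideal with $ann(a)=0$ we conclude $b\in\sqrt{N}$; by definition of the hyperradical this means $b^{n}\in N$ for some $n$, so $N$ is a $pr$-hyperideal. The converse is thus immediate, and the whole argument hinges on the forward direction, whose only nontrivial ingredients are the passage $ann(a)=0\Rightarrow ann(a^{n})=0$ together with the identity $(a\cdot b)^{n}=a^{n}\cdot b^{n}$; the genuine background obstacle is merely establishing (or citing) that $\sqrt{N}$ is a hyperideal of $\Re$ in the first place.
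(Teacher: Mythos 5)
Your proof is correct and follows essentially the same route as the paper's: pass to $(a\cdot b)^{n}=a^{n}\cdot b^{n}\in N$, use regularity of $a^{n}$ and the $pr$-property for the forward direction, and use $N\subseteq\sqrt{N}$ for the converse. The only difference is that you explicitly justify $ann(a)=0\Rightarrow ann(a^{n})=0$ by induction (and flag that $\sqrt{N}$ is a hyperideal), steps the paper simply asserts, which is a welcome tightening rather than a new approach.
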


\begin{proof}
Similarly with \cite{X26}, let $N$ be $pr$-hyperideal, $x\cdot y\in\sqrt{N}$ and
$ann(x)=0.$ There is $n\in%
\mathbb{N}
$ such that $(x\cdot y)^{n}\in N.$ Hence $x^{n}\cdot y^{n}\in N$ and
$ann(x^{n})=0.$ Since $N$ is $pr$-hyperideal, then $y^{nm}\in N,$ for $m\in%
\mathbb{N}
.$ Thus, $y\in\sqrt{N}.$

Conversely, let $x\cdot y\in N$ and $ann(x)=0.$ Since $N\subseteq\sqrt{N},$ then 
$x\cdot y\in\sqrt{N}.$ Therefore, since $\sqrt{N}$ is $r$-hyperideal, then $b\in
\sqrt{N}.$ Thus, $y^{n}\in N,$ for $n\in%
\mathbb{N}
.$
\end{proof}

\begin{proposition}
Let $\Re$ be a hyperring and $nil(\Re)$ be a set of nilpotents of $\Re$. Then
$nil(\Re)$ is an $r$-hyperideal.
\end{proposition}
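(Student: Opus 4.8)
The plan is to reduce the statement to results already established, since $nil(\Re)$ is nothing but the radical of the zero hyperideal. First I would observe that by the definition of the radical, $\sqrt{(0)}=\{r\in\Re:r^{n}\in(0)\text{ for some }n\in\mathbb{N}\}=\{r\in\Re:r^{n}=0\}=nil(\Re)$. Hence it suffices to show that $\sqrt{(0)}$ is an $r$-hyperideal, and the whole statement becomes a special case of the machinery built in the preceding results.

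Next I would chain together the facts already proved. By Corollary 1(a) the zero hyperideal $(0)$ is an $r$-hyperideal, and by Corollary 1(d) every $r$-hyperideal is a $pr$-hyperideal; therefore $(0)$ is a $pr$-hyperideal. The immediately preceding Proposition asserts that a hyperideal $N$ is a $pr$-hyperideal if and only if $\sqrt{N}$ is an $r$-hyperideal. Applying this equivalence with $N=(0)$, the fact that $(0)$ is a $pr$-hyperideal yields at once that $\sqrt{(0)}=nil(\Re)$ is an $r$-hyperideal, which is exactly the assertion.

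As a self-contained alternative that does not cite the Proposition, I would argue directly. Suppose $a\cdot b\in nil(\Re)$ with $ann(a)=0$. Then, by commutativity of the multiplication, $(a\cdot b)^{n}=a^{n}\cdot b^{n}=0$ for some $n\in\mathbb{N}$. A short induction shows that $ann(a)=0$ forces $ann(a^{n})=0$: from $a\cdot(a^{n-1}c)=0$ together with $ann(a)=0$ we get $a^{n-1}c=0$, and iterating the same step drives this down to $c=0$. Consequently $b^{n}\in ann(a^{n})=0$, so $b^{n}=0$ and $b\in nil(\Re)$, which establishes the defining condition of an $r$-hyperideal.

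The main obstacle is not the implication itself but the preliminary bookkeeping: one must confirm that $nil(\Re)$ really is a proper hyperideal before the term ``$r$-hyperideal'' applies. Properness follows from assuming $\Re$ is nontrivial, since $1_{\Re}\in nil(\Re)$ would force $1_{\Re}=0$; and verifying the hyperideal axioms requires checking, through the hyperaddition, that every element of $x-y$ is nilpotent whenever $x,y$ are nilpotent. In the first route this subtlety is already absorbed into the phrase ``$\sqrt{N}$ is an $r$-hyperideal'' supplied by the preceding Proposition, which is why I would present that short argument as the main proof and treat the direct computation as a remark.
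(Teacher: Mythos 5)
Your main argument is exactly the paper's proof: identify $nil(\Re)$ with $\sqrt{(0)}$ and invoke the preceding Proposition together with Corollary 1(a) and (d) — indeed you spell out the justification that the paper compresses into ``Obviously, $\sqrt{0}$ is $r$-hyperideal.'' The proposal is correct, and your supplementary direct computation (showing $ann(a)=0$ implies $ann(a^{n})=0$, hence $b^{n}=0$) is a sound, self-contained alternative, though not needed.
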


\begin{proof}
$nil(\Re)=\{a\in\Re:0=a^{n},n\in%
\mathbb{N}
\}=\sqrt{0}.$ Obviously,$\sqrt{0}$ is $r$-hyperideal. Then $nil(\Re)$ is also $r$-hyperideal.
\end{proof}

From Theorem 1 and Proposition 4, we obtain the next corollary.

\begin{corollary}
Let $N$ be a hyperideal of $\Re.$ The following statements are equivalent:

a) $N$ is $pr$-hyperideal.

b)\ $(r\cdot\Re)\cap\sqrt{N}=r\cdot\sqrt{N},$ for any $r\in\Re(\Re).$

c) $\sqrt{N}=\sqrt{(N:r)},$ for any $r\in r(\Re)$ and $r\notin N.$

d) $N=M^{c},$ $M$ is a primary hyperideal of $Q(\Re).$
\end{corollary}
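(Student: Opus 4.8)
The plan is to reduce everything to the characterization of $r$-hyperideals already obtained in Theorem 3.1, using Proposition 4 as the bridge. By Proposition 4, statement (a), that $N$ is a $pr$-hyperideal, is equivalent to the assertion that $\sqrt{N}$ is an $r$-hyperideal. So I would fix the hyperideal $\sqrt{N}$, read off the four equivalent conditions of Theorem 3.1 applied to $\sqrt{N}$, and then translate each of those conditions on $\sqrt{N}$ back into the stated condition on $N$. The implication (a)$\Leftrightarrow$(b) is then immediate: applying Theorem 3.1(a$\Leftrightarrow$b) to $\sqrt{N}$ gives $(r\cdot\Re)\cap\sqrt{N}=r\cdot\sqrt{N}$ for every regular $r$, which is verbatim condition (b), so no further work is needed there.

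For (a)$\Leftrightarrow$(c) I would argue directly rather than quote Theorem 3.1(c), since $\sqrt{(N:r)}$ and $(\sqrt{N}:r)$ are not literally the same hyperideal. In the forward direction, assume $N$ is a $pr$-hyperideal and let $r$ be regular with $r\notin N$. From $N\subseteq(N:r)$ one gets $\sqrt{N}\subseteq\sqrt{(N:r)}$ for free; conversely, if $x\in\sqrt{(N:r)}$ then $r\cdot x^{m}\in N$ for some $m$, and since $ann(r)=0$ the $pr$-property forces $(x^{m})^{k}\in N$, that is $x\in\sqrt{N}$. This yields (c). For the converse I would take $a\cdot b\in N$ with $ann(a)=0$: if $a\notin N$, then (c) gives $b\in(N:a)\subseteq\sqrt{(N:a)}=\sqrt{N}$, hence $b^{n}\in N$, as required.

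The delicate point in (c)$\Rightarrow$(a), and the step I expect to cost the most care, is the residual case in which the regular element $a$ happens to lie in $N$, where condition (c) says nothing. I would dispose of it by first establishing that a proper $pr$-hyperideal cannot contain a regular element: if $a\in N$ with $ann(a)=0$, then $a\cdot b\in N$ for \emph{every} $b$, so the $pr$-property would give $b\in\sqrt{N}$ for all $b$, forcing $1_{\Re}\in\sqrt{N}$ and contradicting properness. Thus $N\subseteq zd(\Re)$ and this case does not actually occur, which is the analogue for $pr$-hyperideals of Corollary 1(c).

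Finally, for (a)$\Leftrightarrow$(d) I would pass to the quotient hyperring $Q(\Re)=S^{-1}\Re$ with $S=r(\Re)$ via the natural homomorphism $\varphi$. The implication (d)$\Rightarrow$(a) is the clean one: if $N=M^{c}=\varphi^{-1}(M)$ with $M$ primary, then a regular $a$ maps to a unit $\varphi(a)=a/1$ of $Q(\Re)$, so from $a\cdot b\in N$ we get $\varphi(a)\odot\varphi(b)\in M$ with $\varphi(a)\notin\sqrt{M}$; primeness of $\sqrt{M}$ forces $\varphi(b)\in\sqrt{M}$, whence $b^{n}\in\varphi^{-1}(M)=N$. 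The reverse implication is where the \textbf{main obstacle} lies: one must exhibit a primary hyperideal $M$ of $Q(\Re)$ with $N=M^{c}$. The natural candidate is $M=S^{-1}N$, and using that $\sqrt{N}$ is an $r$-hyperideal (Proposition 4) together with Theorem 3.1(d) for $\sqrt{N}$ one sees that $\sqrt{M}=S^{-1}\sqrt{N}$ is prime. The crux, requiring careful regular-element/zero-divisor bookkeeping, is to verify that the contraction $(S^{-1}N)^{c}$ collapses back exactly to $N$ (and not merely to the larger hyperideal $\sqrt{N}$) and that $S^{-1}N$ is genuinely primary in $Q(\Re)$; this is the step I would spend the most effort on.
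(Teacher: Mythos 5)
Your plan --- reduce everything to Theorem 1 through Proposition 4 --- is precisely the derivation the paper intends (its entire ``proof'' is the sentence ``From Theorem 1 and Proposition 4, we obtain the next corollary''), and your treatments of (a)$\Leftrightarrow$(b), of (a)$\Rightarrow$(c), and of (d)$\Rightarrow$(a) are correct. The two steps you yourself single out as delicate, however, contain genuine gaps, and neither can be repaired. In (c)$\Rightarrow$(a), your disposal of the residual case of a regular element $a\in N$ is circular: the fact that a proper $pr$-hyperideal contains no regular element is a consequence of (a), which is exactly what you are trying to prove, and hypothesis (c) gives no control over regular elements lying in $N$ because it quantifies only over $r\notin N$. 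The implication is in fact false: take $\Re=\mathbb{Z}$ (a Krasner hyperring with singleton hyperaddition) and $N=2\mathbb{Z}$. Every regular $r\notin N$ is an odd nonzero integer, so $(N:r)=2\mathbb{Z}$ and $\sqrt{(N:r)}=2\mathbb{Z}=\sqrt{N}$, i.e.\ (c) holds; but $N$ is not a $pr$-hyperideal, since $2\cdot 1\in N$ and $ann(2)=0$ while $1^{n}\notin N$ for all $n$. (The same example shows that the quantifier ``$a\in r(\Re)\backslash N$'' already makes condition (c) of Theorem 1 defective, so this is a flaw of the statement itself, which the paper's non-proof glosses over.)

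For (a)$\Rightarrow$(d) you only name the candidate $M=S^{-1}N$ and defer the two claims that carry all the content; the deferral hides the fact that the first claim fails. One computes $(S^{-1}N)^{c}=\bigcup_{s\in S}(N:s)$, and for a $pr$-hyperideal the $pr$-property gives only $(N:s)\subseteq\sqrt{N}$, not $(N:s)\subseteq N$, so the contraction need not collapse to $N$. Concretely, let $\Re=k[x,y]/(x^{2})$ and $N=(xy)$. Then $N$ is a $pr$-ideal: if $f\cdot g\in N$ with $ann(f)=0$, then $f\notin(x)=zd(\Re)$, hence $g\in(x)$ by primeness of $(x)$, and $g^{2}=0\in N$; equivalently, $\sqrt{N}=(x)$ is an $r$-ideal. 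But $Q(\Re)=k(y)[x]/(x^{2})$ has only the hyperideals $0$, $(x)$ and $Q(\Re)$, whose contractions are $0$, $(x)$ and $\Re$; so there is no hyperideal $M$ of $Q(\Re)$ at all, primary or otherwise, with $M^{c}=N$, and (a)$\Rightarrow$(d) is false as stated. In summary, the implications that really do follow from Theorem 1 plus Proposition 4 you prove correctly, but (c)$\Rightarrow$(a) and (a)$\Rightarrow$(d) are not theorems, and your attempts at them break at exactly the points you flagged: the first by circular use of the conclusion, the second by an unverified (and false) contraction identity.
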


\begin{theorem}
Every $z^{0}$-hyperideal is an $r$-hyperideal.
\end{theorem}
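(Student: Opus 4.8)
The plan is to unwind both definitions and reduce the claim to a single annihilator computation. Let $N$ be a $z^{0}$-hyperideal. To check that $N$ is an $r$-hyperideal, take $a,b\in\Re$ with $a\cdot b\in N$ and $ann(a)=0$; the goal is to deduce $b\in N$. The key observation is that the element $a\cdot b$ already lies in $N$, so if I can show that $a\cdot b$ and $b$ have the same annihilator, then the defining property of a $z^{0}$-hyperideal applied to the pair $(a\cdot b,\,b)$ will immediately force $b\in N$.

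Thus the heart of the argument is the equality $ann(a\cdot b)=ann(b)$, which I would prove by two inclusions, exploiting that multiplication in a Krasner hyperring is an ordinary (single-valued) associative and commutative operation. For $ann(b)\subseteq ann(a\cdot b)$: if $r\cdot b=0$, then $r\cdot(a\cdot b)=a\cdot(r\cdot b)=a\cdot 0=0$, so $r\in ann(a\cdot b)$; this inclusion holds for any $a$. The reverse inclusion is where the hypothesis $ann(a)=0$ is used: if $r\cdot(a\cdot b)=0$, rewrite it by commutativity and associativity as $a\cdot(r\cdot b)=0$, so $r\cdot b\in ann(a)=0$, whence $r\cdot b=0$ and $r\in ann(b)$.

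Having established $ann(a\cdot b)=ann(b)$, I would finish by invoking the $z^{0}$-hyperideal property: since $a\cdot b\in N$, $b\in\Re$, and $ann(a\cdot b)=ann(b)$, the definition yields $b\in N$. This shows that $a\cdot b\in N$ with $ann(a)=0$ implies $b\in N$, i.e.\ $N$ is an $r$-hyperideal. I expect no serious obstacle here; the only point requiring care is the annihilator manipulation, and specifically that $ann(a)=0$ must be read as equality with the zero element so that the containment $r\cdot b\in ann(a)$ collapses to $r\cdot b=0$. It is also worth emphasizing that this step genuinely relies on $\cdot$ being a binary operation rather than a hyperoperation, so that the single-valued identities $a\cdot(r\cdot b)=r\cdot(a\cdot b)$ are available without any set-theoretic complications.
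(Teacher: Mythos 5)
Your proposal is correct and follows exactly the paper's argument: reduce the claim to the equality $ann(a\cdot b)=ann(b)$ (which the paper asserts without detail and you verify by two inclusions, using $ann(a)=0$ for the nontrivial one) and then apply the $z^{0}$-hyperideal property to the pair $(a\cdot b,\,b)$. No gaps; your write-up simply supplies the annihilator computation the paper leaves implicit.
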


\begin{proof}
Similar to \cite{X26}, let $x\cdot y\in N$ and $ann(x)=0.$ Then, $ann(x\cdot
y)=ann(y).$ Since $N$ is $z^{0}$-hyperideal, then $y\in N$ and $N$ is an $r$-hyperideal.
\end{proof}

\begin{theorem}
Every hyperideal which contains zero divisors is contained by a prime $r$-hyperideal.
\end{theorem}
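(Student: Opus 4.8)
The plan is to read the hypothesis as $N\subseteq zd(\Re)$, that is, $N$ consists entirely of zero divisors; this is the only sensible reading, since Corollary 1(c) forces every $r$-hyperideal (a fortiori every prime $r$-hyperideal) to lie inside $zd(\Re)$, so a hyperideal containing a regular element could never be contained in one. Granting this, the strategy is to enlarge $N$ to a hyperideal that is maximal among those still contained in $zd(\Re)$, show that maximality forces such a hyperideal to be prime, and then quote Corollary 1(e), by which a prime hyperideal is an $r$-hyperideal exactly when it consists of zero divisors.

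First I would form $\Sigma=\{I : I\text{ a hyperideal of }\Re,\ N\subseteq I\subseteq zd(\Re)\}$, ordered by inclusion. It is nonempty since $N\in\Sigma$, and because the union of a chain of hyperideals is a hyperideal and remains inside $zd(\Re)$, every chain has an upper bound in $\Sigma$. Zorn's Lemma then yields a maximal $P\in\Sigma$. As $1_{\Re}$ is regular, $1_{\Re}\notin P$, so $P$ is proper. The conceptual reason the remaining step should succeed is that $r(\Re)$ is multiplicatively closed and disjoint from every member of $\Sigma$, so $P$ is a maximal hyperideal avoiding a multiplicatively closed set and hence ought to be prime.

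The core of the argument is precisely this primality. Suppose $a\cdot b\in P$ with $a\notin P$ and $b\notin P$. By the characterization in Lemma \cite{X13}, the hyperideal generated by $P$ and $a$ is $\langle P,a\rangle=\bigcup_{p\in P,\,r\in\Re}(p+r\cdot a)$, and similarly for $b$; both properly contain $P$, so by maximality they leave $\Sigma$, and since each still contains $N$ the only possibility is that each meets $r(\Re)$. Choose regular $s\in\langle P,a\rangle$ and $t\in\langle P,b\rangle$, say $s\in p_1+r_1\cdot a$ and $t\in p_2+r_2\cdot b$ with $p_1,p_2\in P$. Iterating distributivity,
\[
s\cdot t\in(p_1+r_1\cdot a)\cdot(p_2+r_2\cdot b)\subseteq p_1 p_2+p_1 r_2 b+r_1 a p_2+r_1 r_2(a\cdot b),
\]
and each of the four terms lies in $P$ (the first three since $p_1,p_2\in P$, the last since $a\cdot b\in P$), so $s\cdot t\in P$. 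But $ann(s\cdot t)=0$, because $x\cdot(s\cdot t)=0$ gives $x\cdot s\in ann(t)=0$ and then $x\in ann(s)=0$; hence $s\cdot t$ is a regular element of $P\subseteq zd(\Re)$, a contradiction. Therefore $P$ is prime, and being a prime hyperideal with $N\subseteq P\subseteq zd(\Re)$, it is by Corollary 1(e) a prime $r$-hyperideal containing $N$.

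I expect the only genuine friction to be the hyperoperation bookkeeping: justifying the explicit description of $\langle P,a\rangle$ from the subset criterion of Lemma \cite{X13}, and verifying that the hyperset product above is contained in $P$ (using that $P$, as a canonical subhypergroup, is closed under the hypersum of two of its elements) rather than merely intersecting it. Everything outside this is a direct application of Zorn's Lemma together with Corollary 1(e).
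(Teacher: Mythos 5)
Your proposal is correct, and in fact it supplies an argument where the paper gives none: the paper's entire proof of this theorem is the sentence ``It is trivial.'' Your reading of the hypothesis as $N\subseteq zd(\Re)$ is the right one (it is the only reading compatible with Corollary 1(c)), and your route is the natural adaptation of the classical ring argument from \cite{X26}: since $r(\Re)$ is multiplicatively closed (the product of regular elements is regular, exactly by your computation $x\cdot(s\cdot t)=0\Rightarrow x\cdot s\in ann(t)=0\Rightarrow x\in ann(s)=0$), Zorn's Lemma produces a hyperideal $P$ maximal among those squeezed between $N$ and $zd(\Re)$; maximality plus the explicit description of $\langle P,a\rangle$ forces $P$ to be prime, and Corollary 1(e) then upgrades ``prime and inside $zd(\Re)$'' to ``prime $r$-hyperideal.'' The hyperstructure bookkeeping you flag does go through: the union of a chain of hyperideals satisfies the criterion of the Lemma from \cite{X13}; the description $\langle P,a\rangle=\bigcup_{p\in P,\,r\in\Re}(p+r\cdot a)$ is valid because the paper's hyperrings are unital (it uses $1_{\Re}$ throughout) and because $(r_1-r_2)\cdot a=r_1\cdot a-r_2\cdot a$ by distributivity; and $P$, being a canonical subhypergroup containing the inverses of its elements, absorbs the four-term hypersum $p_1p_2+p_1r_2b+r_1ap_2+r_1r_2(a\cdot b)$, so $s\cdot t\in P$ and the contradiction with $P\subseteq zd(\Re)$ is genuine. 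In short, your proof is complete and is what the paper's ``trivial'' presumably intends; the one presentational improvement would be to state explicitly, before invoking Zorn, that the union of a chain in $\Sigma$ is again a hyperideal contained in $zd(\Re)$, since that is the only place where the partial order needs an argument.
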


\begin{proof}
It is trivial.
\end{proof}

\begin{theorem}
If $N$ is a hyperideal and $I\in Min(N)$ in hyperring $\Re,$ then $I$ is an $r$-hyperideal.
\end{theorem}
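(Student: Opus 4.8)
The plan is to route everything through Corollary 1(e). Since $I$ is by hypothesis a prime hyperideal (it lies in $Min(N)$), that corollary tells us $I$ is an $r$-hyperideal exactly when $I\subseteq zd(\Re)$. So the entire theorem collapses to one purely multiplicative assertion: every element of a minimal prime over $N$ is a zero divisor. I would state this reduction first, because it replaces the $r$-hyperideal condition (a statement quantified over pairs $a\cdot b\in I$ with $ann(a)=0$) by the much more tractable problem of controlling $ann(a)$ for a single $a\in I$.

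To show $I\subseteq zd(\Re)$, I would exploit minimality of $I$ over $N$ through localization. Setting $S=\Re\setminus I$ and passing to $S^{-1}\Re$ via the quotient-hyperring machinery recalled in the Preliminaries, the image $S^{-1}I$ is the unique minimal prime over $S^{-1}N$, hence equals $\sqrt{S^{-1}N}$. Pulling this back along $\varphi:\Re\to S^{-1}\Re$ yields, for each $a\in I$, an element $s\in S$ (so $s\notin I$) and an $n\in\mathbb{N}$ with $s\cdot a^{n}\in N$. Feeding in a zero-divisor hypothesis on $N$ (the standing assumption $N\subseteq zd(\Re)$ under which the preceding theorem operates), we obtain a nonzero $t$ with $t\cdot s\cdot a^{n}=0$.

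From $t\cdot s\cdot a^{n}=0$ I would descend on the exponent: take $k$ least with $t\cdot s\cdot a^{k}=0$; if $k\geq1$ then $t\cdot s\cdot a^{k-1}$ is a nonzero annihilator of $a$, so $a\in zd(\Re)$, and Corollary 1(e) then closes the argument. The hard part — and the step I expect to carry all the weight — is the degenerate branch $k=0$, i.e. $t\cdot s=0$: there the zero-divisor witness attaches to $s$, which lies outside $I$, and tells us nothing about $a$. This branch is not an artifact of the bookkeeping: if $a$ were regular then $a^{n}$ would be regular, forcing $t\cdot s\cdot a^{n}=0$ to collapse immediately to $t\cdot s=0$, so the whole difficulty is exactly to rule out that a minimal prime over $N$ contains a regular element. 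I would therefore concentrate the proof on that single implication, almost certainly leaning on $N\subseteq zd(\Re)$ together with a McCoy/prime-avoidance control of $zd(\Re)$ by finitely many primes, and treat the localization and descent as routine.
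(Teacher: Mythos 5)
Your plan stalls exactly where you predict it will, and the reason is that you are carrying the wrong hypothesis. The theorem --- as the paper's own proof makes explicit, and as in Mohamadian's original ring-theoretic version --- needs $N$ to be an \emph{$r$-hyperideal}; the paper's argument invokes this verbatim (``Since $N$ is $r$-hyperideal\dots''), even though the printed statement omits it. You instead import only the strictly weaker condition $N\subseteq zd(\Re)$, calling it a standing assumption; it is not one, and under it the assertion you reduced to (every minimal prime over $N$ lies in $zd(\Re)$) is simply false. Counterexample (an ordinary ring, hence a Krasner hyperring): $\Re=k[x,y]/(xy)$ and $N=(x^{2}-x)$. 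Here $N\subseteq(x)\subseteq zd(\Re)=(x)\cup(y)$, and $I=(x-1)=(x-1,y)$ is a minimal prime over $N$ (any prime $Q$ with $N\subseteq Q\subseteq I$ contains $x$ or $x-1$, and it cannot contain $x$, since $x\in I$ together with $x-1\in I$ would force $1\in I$), yet $x-1\notin(x)\cup(y)$ is regular, so $I$ is not an $r$-hyperideal. Thus no McCoy or prime-avoidance argument can close your $k=0$ branch: that branch is open because the statement you are trying to prove there is wrong. (For the same reason the theorem as literally printed, with no hypothesis on $N$, is also false: take $\Re=\mathbb{Z}$, $N=4\mathbb{Z}$, $I=2\mathbb{Z}$.)

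With the correct hypothesis your architecture does work, and the gap closes in one line. Your localization step produces, for $a\in I$, elements $s\notin I$ and $n\in\mathbb{N}$ with $s\cdot a^{n}\in N$; if $a$ were regular, then $a^{n}$ is regular, and applying the $r$-property of $N$ to $a^{n}\cdot s\in N$ yields $s\in N\subseteq I$, contradicting $s\notin I$. Hence $I\subseteq zd(\Re)$, and Corollary 1(e) finishes. Note that this repaired route still differs from the paper's proof, which never passes through Corollary 1(e): there one takes $x\cdot y\in I$ with $ann(x)=0$, uses minimality of $I$ to find $a\notin I$ and $n$ with $a\cdot x^{n}\cdot y^{n}\in N$, applies the $r$-property of $N$ (with $x^{n}$ regular) to get $a\cdot y^{n}\in N\subseteq I$, and concludes $y^{n}\in I$, hence $y\in I$, by primality of $I$. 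Both arguments use the same two ingredients --- the minimal-prime localization fact and the $r$-property of $N$ --- and your attempt shows precisely that the second ingredient is the one that cannot be discarded.
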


\begin{proof}
Let $x\cdot y\in I$ and $ann(x)=0.$ There exists $a\notin I$ and $n\in%
\mathbb{N}
$ such that $a\cdot(x\cdot y)^{n}=a\cdot x^{n}\cdot y^{n}\in N.$ Since $N$ is
$r$-hyperideal and $ann(x^{n})=0,$ then $a\cdot y^{n}\in N\subseteq I.$ Since
$a\notin I,$ then $y^{n}\in I.$ Thus $y\in I.$
\end{proof}

\begin{proposition}
Let $N$ be an $r$-hyperideal and $N\subseteq M.$ If $M/N$ is $r$-hyperideal of
$\Re/N,$ then $M$ is $r$-hyperideal of $\Re.$
\end{proposition}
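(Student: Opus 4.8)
The plan is to verify the defining property of an $r$-hyperideal for $M$ directly, by transferring the hypothesis on $a\cdot b$ down to the quotient hyperring $\Re/N$, invoking the $r$-hyperideal property of $M/N$ there, and then pulling the conclusion back up to $\Re$. First I would take arbitrary $a,b\in\Re$ with $a\cdot b\in M$ and $ann(a)=0$, and pass to $\Re/N$, writing $\bar{x}=x+N$ for cosets, with zero element $\bar{0}=N$ and binary multiplication $\bar{x}\cdot\bar{y}=\overline{x\cdot y}$. Since $a\cdot b\in M$, the coset $\bar{a}\cdot\bar{b}=\overline{a\cdot b}$ lies in $M/N$. Thus the goal reduces to proving $\bar{b}\in M/N$, which would immediately give $b\in M$ because $N\subseteq M$.

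The decisive step is to show that the regularity of $a$ in $\Re$ is inherited by $\bar{a}$ in $\Re/N$, that is, the annihilator of $\bar{a}$ computed in $\Re/N$ is $\{\bar{0}\}$. Suppose $\bar{c}$ annihilates $\bar{a}$, so that $\bar{a}\cdot\bar{c}=\bar{0}$. Since multiplication in a Krasner hyperring is an ordinary binary operation, $\bar{a}\cdot\bar{c}=\overline{a\cdot c}$ is a single coset, and $\overline{a\cdot c}=\bar{0}$ means precisely that $a\cdot c\in N$. This is exactly where the hypothesis that $N$ is itself an $r$-hyperideal enters: from $a\cdot c\in N$ together with $ann(a)=0$, the $r$-hyperideal property of $N$ forces $c\in N$, i.e. $\bar{c}=\bar{0}$. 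Hence $\bar{a}$ is regular in $\Re/N$.

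Now I would apply the assumption that $M/N$ is an $r$-hyperideal of $\Re/N$: since $\bar{a}\cdot\bar{b}\in M/N$ and the annihilator of $\bar{a}$ in $\Re/N$ is $\{\bar{0}\}$, we conclude $\bar{b}\in M/N$, and therefore $b\in M$. As $a,b$ were arbitrary with $a\cdot b\in M$ and $ann(a)=0$, this establishes that $M$ satisfies the $r$-hyperideal condition; it is proper because $M/N\neq\Re/N$ forces $M\neq\Re$. I expect the translation of regularity through the quotient to be the only genuine obstacle, since the argument hinges on the observation that the product $a\cdot c$ is a single element rather than a hyperset, which makes $\bar{a}\cdot\bar{c}=\bar{0}$ equivalent to the clean membership $a\cdot c\in N$; after that, the $r$-hyperideal property of $N$ does all the work, and the remaining verifications are a routine unwinding of the coset arithmetic.
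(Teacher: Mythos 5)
Your proof is correct, and it is actually more complete than the paper's own argument. Both proofs work through the quotient $\Re/N$, but the paper runs the translation in the wrong direction: it starts from coset data, assuming $(a+N)\cdot(b+N)\subseteq M/N$ and $ann(a+N)=0$, unwinds the coset arithmetic to conclude $a\cdot b\in M$, and then ends with ``Since $ann(a)=0$ and $b\in M$, $M$ is an $r$-hyperideal'' --- neither of which has been established at that point; notably, the paper's proof never invokes the hypothesis that $N$ is an $r$-hyperideal at all. Your argument goes in the logically correct direction (take $a\cdot b\in M$ with $ann(a)=0$ in $\Re$, pass to $\Re/N$, apply the hypothesis on $M/N$, pull back), and it isolates exactly the step the paper glosses over: that $ann(a)=0$ in $\Re$ forces $ann(a+N)=\{\bar{0}\}$ in $\Re/N$, which is precisely where the assumption that $N$ is an $r$-hyperideal enters (from $\bar{a}\cdot\bar{c}=\bar{0}$, i.e. $a\cdot c\in N$, the $r$-property of $N$ gives $c\in N$). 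Your observation that the single-valuedness of multiplication makes $\bar{a}\cdot\bar{c}=\bar{0}$ equivalent to $a\cdot c\in N$ is also the right justification for that equivalence. In short, this regularity-transfer step is the real content of the proposition, and your write-up supplies the argument the paper evidently intended but did not carry out.
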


\begin{proof}
Let $M/N$ be $r$-hyperideal of $\Re/N,$ $(a+N)\cdot(b+N)\subseteq M/N$ and
$ann(a+N)=0$ for $a+N,b+N\in\Re/N.$

$(a+N)\cdot(b+N)\subseteq a\cdot(b+N)+N\cdot(b+N)\subseteq(a\cdot b)+(a\cdot
N)+(N\cdot b)+N\subseteq(a\cdot b)+N\subseteq M+N.$ So, $a\cdot b\in M.$ Since
$ann(a)=0$ and $b\in M$, $M$ is an $r$-hyperideal.
\end{proof}

In the following, we investigate the maximal hyperideal and $r$-hyperideal relation.

\begin{proposition}
Every maximal $r$-hyperideal is a prime hyperideal.
\end{proposition}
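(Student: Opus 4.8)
The plan is to argue by contradiction, mirroring the classical fact that a maximal $r$-ideal in a commutative ring is prime, and to lean on the residual characterization of $r$-hyperideals already recorded in Corollary 1(f). Here I read ``maximal $r$-hyperideal'' as a proper $r$-hyperideal that is maximal with respect to inclusion among all proper $r$-hyperideals of $\Re$. Since multiplication in a Krasner hyperring is an ordinary single-valued binary operation (only addition is a hyperoperation), the primality condition to verify is the elementwise one: whenever $a\cdot b\in N$, either $a\in N$ or $b\in N$.

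First I would fix a maximal $r$-hyperideal $N$ and take $a,b\in\Re$ with $a\cdot b\in N$ and $a\notin N$; the goal is to deduce $b\in N$. The central device is the residual $(N:a)=\{r\in\Re:r\cdot a\in N\}$. I would begin by recording the two elementary containments that drive the argument: $N\subseteq(N:a)$ always holds because $N$ is a hyperideal, and $b\in(N:a)$ holds because $a\cdot b\in N$.

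Next I would invoke Corollary 1(f) with $S=\{a\}$. Since $a\notin N$ means $\{a\}\nsubseteq N$, that corollary guarantees that $(N:a)$ is itself an $r$-hyperideal, and in particular it is proper: if $(N:a)$ were all of $\Re$ then $1_{\Re}\cdot a=a\in N$, contradicting $a\notin N$. Now maximality of $N$ takes over. The hyperideal $(N:a)$ is a proper $r$-hyperideal containing $N$, so by maximality it cannot strictly contain $N$; hence $(N:a)=N$. Combining this with $b\in(N:a)$ yields $b\in N$, which is exactly the primality requirement, so $N$ is prime.

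I expect the only genuine points needing care to be the correct reading of ``maximal'' and the verification that $(N:a)$ is proper, both handled above; the substantive content, namely that a residual of an $r$-hyperideal by an element lying outside it is again an $r$-hyperideal, is supplied wholesale by Corollary 1(f), so no fresh computation in the canonical hypergroup is required. An equivalent phrasing avoids asserting $(N:a)=N$ directly: from $b\in(N:a)$, if $b\notin N$ then $N\subsetneq(N:a)$ exhibits a proper $r$-hyperideal strictly larger than $N$, contradicting maximality, and therefore $b\in N$.
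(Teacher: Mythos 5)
Your proof is correct and takes the same route the paper intends: the paper's own ``proof'' is just the citation \cite{X26}, and the classical argument there is precisely your residual computation --- $(N:a)$ is a proper $r$-ideal containing $N$, so maximality forces $(N:a)=N$ and hence $b\in(N:a)=N$. You have simply supplied the details the paper delegates to the reference, using the hyperring version of the key fact (Corollary 1(f)) and correctly checking the two points that need care, namely properness of $(N:a)$ and the reading of ``maximal'' as maximal among $r$-hyperideals, which matches the paper's usage elsewhere.
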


\begin{proof}
Obvious from \cite{X26}.
\end{proof}

\begin{proposition}
If every prime hyperideal of $\Re$ is an $r$-hyperideal, every maximal
hyperideal of $\Re$ is an $r$-hyperideal.
\end{proposition}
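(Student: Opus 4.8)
The plan is to reduce the statement to the classical containment that every maximal hyperideal is prime. Granting this, the proposition is immediate: if $M$ is a maximal hyperideal of $\Re$, then $M$ is prime, and the hypothesis that every prime hyperideal of $\Re$ is an $r$-hyperideal forces $M$ itself to be an $r$-hyperideal. So the entire content of the argument lies in establishing that a maximal hyperideal of a commutative Krasner hyperring is prime.

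To prove that a maximal hyperideal $M$ is prime, I would argue directly. Suppose $a \cdot b \in M$ with $a \notin M$. Since $M$ is maximal, the hyperideal generated by $M \cup \{a\}$ is all of $\Re$, so $1_{\Re} \in M + a \cdot \Re$; thus there are $m \in M$ and $r \in \Re$ with $1_{\Re} \in m + a \cdot r$. Multiplying by $b$ and using distributivity and commutativity, $\{b\} = b \cdot 1_{\Re} \subseteq b \cdot (m + a \cdot r) = (b \cdot m) + (a \cdot b) \cdot r$. Now $b \cdot m \in M$ because $m \in M$, and $(a \cdot b) \cdot r \in M$ because $a \cdot b \in M$; since $M$ is closed under the hyperaddition, it follows that $b \in (b \cdot m) + (a \cdot b) \cdot r \subseteq M$. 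Hence $M$ is prime, and the hypothesis finishes the argument.

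The main obstacle is purely notational: because $+$ is a hyperoperation, the membership $1_{\Re} \in M + a \cdot \Re$ and the subsequent multiplication by $b$ must be read as set-level inclusions rather than element equalities, and one must invoke the closure of a hyperideal under hyperaddition (which follows from the defining property $x - y \subseteq M$ for $x, y \in M$ together with stability under inverses). Alternatively, this step can be bypassed entirely by quoting the standard fact that the quotient $\Re / M$ by a maximal hyperideal is a hyperfield, hence a hyperdomain, so that $M$ is prime; the proposition then follows from the hypothesis without any further computation.
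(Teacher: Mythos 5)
Your proof is correct (granted the standing assumption, used throughout the paper, that $\Re$ has a multiplicative identity $1_{\Re}$), and its logical skeleton matches the paper's: pass from maximal to prime, then invoke the hypothesis. The difference is where the work is placed. The paper merely \emph{quotes} the fact that every maximal hyperideal is prime, and then, instead of finishing in one line with the hypothesis, runs a direct colon-hyperideal verification: from $a\cdot b\in M$ and $ann(a)=0$ it gets $b\in (M:a)$ and asserts $(M:a)=M$ by maximality. That assertion is where the paper is silently incomplete: maximality only gives $(M:a)=M$ or $(M:a)=\Re$, and the second case, which occurs exactly when $a\in M$, must be excluded; doing so actually requires the hypothesis (by Corollary 1(c), a prime hyperideal that is an $r$-hyperideal satisfies $M\subseteq zd(\Re)$, so $M$ cannot contain a regular element $a$). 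Your route never meets this issue: once ``maximal implies prime'' is in hand, the hypothesis instantly makes $M$ an $r$-hyperideal, with no computation. Conversely, you spend your effort proving ``maximal implies prime'' from scratch --- via $1_{\Re}\in M+a\cdot\Re$, distributivity, and closure of hyperideals under the hyperaddition, or alternatively via the hyperfield quotient $\Re/M$ --- which the paper takes for granted. So your write-up is the more self-contained and logically tighter of the two: it supplies the one fact the paper leaves uncited, and it discards the paper's extra colon-ideal computation, which is redundant given the appeal to primeness and, as written, has the small gap noted above.
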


\begin{proof}
Suppose that $M$ is a maximal hyperideal of $\Re$. We know that every maximal
hyperideal is a prime hyperideal. Let $a\cdot b\in M$ with $ann(a)=0.$ Then,
$b\in(M:a).$ Since $M$ is maximal, then $M=(M:a)$ and $b\in M.$
\end{proof}

\begin{proposition}
Let $\Re$ be a reduced hyperring satisfying the Property A. Then every maximal
$r$-hyperideal is an $z^{0}$-hyperideal.
\end{proposition}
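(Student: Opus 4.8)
The plan is to reduce the proposition to a structural statement about minimal prime hyperideals. By the proposition that every maximal $r$-hyperideal is prime, $N$ is a prime hyperideal, and by Corollary 1(c) we have $N\subseteq zd(\Re)$. The heart of the argument will be to show that, under the reduced and Property A hypotheses, such an $N$ is necessarily a \emph{minimal} prime hyperideal; granting this, the $z^{0}$-property drops out quickly, so I would record that implication first.

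Suppose $P$ is a minimal prime hyperideal of the reduced hyperring $\Re$. I claim $P$ is a $z^{0}$-hyperideal. Let $a\in P$ and $b\in\Re$ with $ann(a)=ann(b)$. Since $\Re$ is reduced, $\bigcap Min(\Re)=nil(\Re)=(0)$, and the standard minimal-prime criterion provides $s\notin P$ with $s\cdot a=0$. Then $s\in ann(a)=ann(b)$, so $s\cdot b=0\in P$; as $P$ is prime and $s\notin P$, we conclude $b\in P$. This is exactly the defining condition of a $z^{0}$-hyperideal, and it is the part I expect to carry out in full detail.

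The substantive step is therefore to prove that the maximal $r$-hyperideal $N$ is a minimal prime. My plan is first to produce a minimal prime hyperideal $P$ with $N\subseteq P$, and then to upgrade the containment to equality: since $P\in Min(\Re)$ is itself an $r$-hyperideal by Corollary 1(e), while $N$ is maximal among proper $r$-hyperideals and $N\subseteq P$, maximality forces $N=P$, a minimal prime. To build such a $P$ I would pass to finitely generated subhyperideals of $N$: given a finitely generated $I\subseteq N\subseteq zd(\Re)$, Property A furnishes $0\neq c$ with $c\cdot I=0$, and since $\bigcap Min(\Re)=(0)$ some minimal prime omits $c$; as $c\cdot I=0$ lies in that prime while $c$ does not, $I$ is contained in it. Hence every finite subset of $N$ sits inside a common minimal prime.

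The hard part will be the passage from these finitely generated pieces to all of $N$, because a priori distinct finitely generated subhyperideals might be captured by distinct minimal primes. I would handle this by observing that the family $\{\,Q\in Min(\Re):a\in Q\,\}$, as $a$ ranges over $N$, has the finite intersection property — this is precisely what the preceding paragraph establishes on finite subsets — and then invoking the quasi-compactness of $Min(\Re)$ to extract a single minimal prime $P$ with $N\subseteq P$. This assembly, together with the recognition that Property A is exactly the hypothesis supplying nonzero annihilators for finitely generated ideals of zero divisors, is where the real content lies; the remaining hyperarithmetic is routine and mirrors the commutative-ring treatment of \cite{X26}.
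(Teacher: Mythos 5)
Your second paragraph and your finitely-generated step are essentially correct: minimal prime hyperideals of a reduced hyperring are $z^{0}$-hyperideals, and Property A together with $\bigcap Min(\Re)=nil(\Re)=(0)$ does place every finitely generated subhyperideal of $N$ inside some minimal prime. The gap is the assembly step, and it is fatal rather than technical. Quasi-compactness of $Min(\Re)$ is neither a hypothesis nor a consequence of ``reduced $+$ Property A'': already for reduced commutative rings, Property A together with compactness of the minimal prime spectrum is equivalent to the total quotient ring being von Neumann regular (Quentel's theorem; see Huckaba \cite{X17} and Lucas \cite{X21}), which is strictly stronger than Property A. Worse, the intermediate statement you are steering toward --- every maximal $r$-hyperideal is a minimal prime --- is false under the stated hypotheses. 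Take any reduced total quotient ring $R$ with Property A and Krull dimension at least $1$; such rings exist (Huckaba's $A+B$ construction, or $C(X)$ for $X$ an almost P-space that is not a P-space, e.g.\ $X=\beta\mathbb{N}\setminus\mathbb{N}$), and since every commutative ring is a Krasner hyperring they lie in the paper's setting. In such an $R$ every regular element is a unit, so every proper ideal is an $r$-ideal; hence the maximal $r$-ideals are exactly the maximal ideals, and at least one maximal ideal $N$ properly contains a minimal prime. For that $N$ no minimal prime contains $N$ at all (a minimal prime containing the maximal ideal $N$ would equal $N$, contradicting that $N$ properly contains one), so no compactness argument can rescue the construction --- even though $N$ is in fact a $z^{0}$-ideal, so the proposition itself is not in doubt.

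The paper avoids this trap by enveloping $N$ in a different object. Using Property A one produces a \emph{proper} $z^{0}$-hyperideal $J$ containing $N$ (concretely, $J=\{b\in\Re : ann(b)\supseteq ann(a_{1})\cap\cdots\cap ann(a_{n})\ \text{for some}\ a_{1},\dots,a_{n}\in N\}$; properness is exactly where Property A enters, since $N\subseteq zd(\Re)$), then invokes the paper's earlier theorem that every $z^{0}$-hyperideal is an $r$-hyperideal, and finishes with the same maximality argument you intended: $N\subseteq J$ with $J$ a proper $r$-hyperideal forces $N=J$, so $N$ is a $z^{0}$-hyperideal. In short, your endgame (playing maximality against an enveloping $r$-hyperideal) is the right one, but the enveloping object must be a $z^{0}$-hyperideal built directly from annihilators, not a minimal prime.
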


\begin{proof}
Let $I$ be a maximal $r$-hyperideal, $I\subseteq zd(\Re).$ There exists a
$z^{0}$-hyperideal $J$ such that $I\subseteq J.$ Then $J$ is an $r$-hyperideal.
Since $I$ is maximal, then $I=J.$ Thus, $I$ is an $z^{0}$-hyperideal.
\end{proof}

\section{Generalizations of $r$-Hyperideals}

In this section, we introduce a generalization of $r$-hyperideals in commutative Krasner
hyperrings. $Id(\Re)$ denotes the hyperideals of $\Re.$ Initially, we give the
definition of $\phi-$prime and $\phi-$primary hyperideal. Similarly, we give
the definitions of $\phi-r$-hyperideal, $\phi-pr$-hyperideal, $\phi-$pure
hyperideal and $\phi-$von Neumann regular hyperideal in $\Re.$ Afterwards, we
investigate some properties of $\phi-r$-hyperideal of $\Re$.

\begin{definition}
Let $\Re$ be a Krasner hyperring$,$ $\phi:Id(\Re)\rightarrow Id(\Re
)\cup\{\emptyset\}$ be a function and $\emptyset\neq N\in Id(\Re).$ Then $N$
is said to be a $\phi-$prime (resp. $\phi-$primary) hyperideal of $\Re$ if
whenever $r,s\in\Re$ and $\Re\cdot s\in N-$ $\phi(N),$ then $r\in N$ or $s\in
N$ (resp. $r\in N$ or $s^{n}\in N$).
\end{definition}

\begin{definition}
Let $\Re$ be a Krasner hyperring, $N$ be a proper hyperideal of $\Re$ and
$\phi:Id(\Re)\rightarrow Id(\Re)\cup\{\emptyset\}$ be a function. $N$ is
called $\phi-r$-hyperideal (resp., $\phi-pr$-hyperideal) if $a\cdot b\in
N-\phi(N)$ with $ann(a)=0$ implies that $b\in N$ (resp., $b^{n}\in N$), for
$a,b\in\Re.$
\end{definition}

Let $J$ be a $\phi-r$-hyperideal of $\Re.$ Then:

i) If $\phi(J)=\varnothing,$ for all $J\in Id(\Re),$ then $J$ is called
$\phi_{\emptyset}-r$-hyperideal of $\Re$ ($r$-hyperideal).

ii) If $\phi(J)=0,$ for all $J\in Id(\Re),$ then $J$ is called $\phi_{0}%
-r$-hyperideal of $\Re$ (weakly $r$-hyperideal).

iii) If $\phi(J)=J,$ for all $J\in Id(\Re),$ then $J$ is called $\phi_{1}%
-r$-hyperideal of $\Re$ (any hyperideal).

iv) If $\phi(J)=J^{n},$ for all $J\in Id(\Re)$ and $n\geq2,$ then $N$ is called
$\phi_{n}-r$-hyperideal ($n$-almost $r$-hyperideal). If $n=2,$ then $J$ is called
almost $r$-hyperideal of $\Re$.

v) \ If $\phi(J)=\cap_{i=1}^{\infty}J^{i},$ for all $J\in Id(\Re),$ then $J$ is
called $\phi_{\omega}-r$-hyperideal ($\omega-r$-hyperideal) of $\Re.$

\begin{definition}
Let $\Re$ be a Krasner hyperring, $N$ be a proper ideal of $\Re$ and
$\phi:Id(\Re)\rightarrow Id(\Re)\cup\{0\}$ be a function.

(1) $N$ is called $\phi-$pure hyperideal if there exists $b\in N$ such that
$a=a\cdot b,$ for every $a\in N-\phi(N).$

(2) $N$ is called $\phi-$von Neumann regular hyperideal if there exists $b\in
N$ such that $a=a^{2}\cdot b,$ for every $a\in N-\phi(N).$
\end{definition}

\begin{theorem}
Let $N\in Id(\Re).$ Then the following statements hold:

(i) Let $\psi_{1}$ and $\psi_{2}$ be two functions. $\psi_{1,2}%
:Id(\Re)\longrightarrow Id(\Re)\cup\{\emptyset\}$ such that $\psi_{1}\leq
\psi_{2}.$ If $N$ is a $\psi_{1}-r$-hyperideal, then $N$ is a $\psi_{2}-r$-hyperideal.

(ii) Every $\phi-r$-hyperideal is a $\phi-pr$-hyperideal.

(iii) $N$ is an $r$-hyperideal $\Rightarrow N$ is a weakly
$r$-hyperideal $\Rightarrow$ $N$ is an $\omega-r$-hyperideal $\Rightarrow$ $N$ is
an $(n+1)$-almost $r$-hyperideal $\Rightarrow N$ is an $n$-almost $r$-hyperideal 
$\Rightarrow N$ is an almost $r$-hyperideal.

(iv) Every $\phi-$pure hyperideal is a $\phi-r$-hyperideal.

(v) Every $\phi-$von Neumann regular hyperideal is a $\phi-r$-hyperideal.
\end{theorem}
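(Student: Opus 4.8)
The plan is to split the five claims into two groups. Parts (i)--(iii) are purely order-theoretic and reduce to a single bookkeeping principle, while parts (iv)--(v) require honest computation inside the canonical hypergroup $(\Re,+)$, and these are where the real work lies.

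For (i) I read $\psi_{1}\leq\psi_{2}$ as $\psi_{1}(M)\subseteq\psi_{2}(M)$ for all $M\in Id(\Re)$. The point is that a larger excluded set leaves a smaller test set: from $\psi_{1}(N)\subseteq\psi_{2}(N)$ we get $N-\psi_{2}(N)\subseteq N-\psi_{1}(N)$. Hence if $a\cdot b\in N-\psi_{2}(N)$ with $ann(a)=0$, then $a\cdot b\in N-\psi_{1}(N)$ as well, and the $\psi_{1}-r$-property forces $b\in N$. Part (ii) is immediate: if $a\cdot b\in N-\phi(N)$ with $ann(a)=0$ then $b\in N$ by the $\phi-r$-property, and since $N$ is a hyperideal it absorbs multiplication, so $b^{n}\in N$.

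For (iii) I reduce each implication to part (i) by checking pointwise containments of the relevant functions. With $\phi_{\emptyset}(J)=\emptyset$, $\phi_{0}(J)=\{0\}$, $\phi_{\omega}(J)=\bigcap_{i\geq1}J^{i}$ and $\phi_{m}(J)=J^{m}$, the chain rests on
\[\emptyset\subseteq\{0\}\subseteq\bigcap_{i\geq1}J^{i}\subseteq J^{n+1}\subseteq J^{n}\subseteq J^{2},\]
valid for every $J$ and every $n\geq2$: the second inclusion holds because $0$ lies in every $J^{i}$, the third because an intersection is contained in each factor, and the last two because $J^{k+1}=J\cdot J^{k}\subseteq J^{k}$ as $J^{k}$ is a hyperideal. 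Applying (i) to each adjacent pair yields the displayed chain.

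The substance is in (iv) and (v). For (iv), let $N$ be $\phi$-pure and take $x\cdot y\in N-\phi(N)$ with $ann(x)=0$; purity supplies $b\in N$ with $x\cdot y=(x\cdot y)\cdot b=x\cdot(y\cdot b)$. Using distributivity together with the standard Krasner identity $-(x\cdot c)=x\cdot(-c)$, one gets $0\in(x\cdot y)-x\cdot(y\cdot b)=x\cdot(y-y\cdot b)$, so some $t\in y-y\cdot b$ satisfies $x\cdot t=0$; since $ann(x)=0$ this forces $t=0$, i.e.\ $0\in y-y\cdot b$. Reversibility (axiom (v) of the canonical hypergroup) then yields the single-element identity $y=y\cdot b\in N$. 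Part (v) runs identically: $\phi$-von Neumann regularity gives $b\in N$ with $x\cdot y=(x\cdot y)^{2}\cdot b=x\cdot(x\cdot y^{2}\cdot b)$, and the same annihilator-plus-reversibility step produces $y=x\cdot y^{2}\cdot b$, which lies in $N$ since $b\in N$ and $N$ is a hyperideal. The main obstacle is precisely this hyperoperation bookkeeping: justifying $0\in x\cdot(y-y\cdot b)$ via distributivity over the hyperset $y-y\cdot b$, and then invoking reversibility correctly to collapse $0\in y-y\cdot b$ to the equality $y=y\cdot b$. Everything else is formal.
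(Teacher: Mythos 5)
Your proof is correct: parts (i)--(iii) reduce, exactly as you argue, to the pointwise comparisons $\emptyset\subseteq\{0\}\subseteq\bigcap_{i\geq1}J^{i}\subseteq J^{n+1}\subseteq J^{n}\subseteq J^{2}$ fed into (i), and in (iv)--(v) the cancellation of the regular element $x$ --- passing from $x\cdot y=x\cdot(y\cdot b)$ to $0\in x\cdot(y-y\cdot b)$ by distributivity and $-(x\cdot c)=x\cdot(-c)$, then using $ann(x)=0$ and reversibility to collapse $0\in y-y\cdot b$ to $y=y\cdot b\in N$ --- is carried out correctly. The paper itself dismisses this theorem with the single word ``Straightforward,'' so your writeup is the natural intended argument and in fact supplies the only genuinely nontrivial details (the Krasner-hyperring cancellation step in (iv)--(v)) that the authors omit.
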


\begin{proof}
Straighforward.
\end{proof}

\begin{theorem}
Let $\Re$ be a Krasner hyperring, $N$ be a proper ideal of $\Re.$ Then the
following statements are equivalent:

i) $N$ is $\phi-r$-hyperideal;

ii) $(N:r)=N\cup(\phi(N):r),$ $r\in r(\Re)\backslash N;$

iii) $(N:r)=N$ or $(N:r)=(\phi(N):r),$ $r\in r(\Re)\backslash N;$
\end{theorem}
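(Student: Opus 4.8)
The plan is to prove the three-way equivalence by establishing a cycle $(i)\Rightarrow(ii)\Rightarrow(iii)\Rightarrow(i)$, treating $\phi$ as an arbitrary reduction-type map so that $\phi(N)\subseteq N$ is available throughout. The underlying idea mirrors the characterization of ordinary $r$-hyperideals via colon hyperideals in Theorem 3.1(c), but now the defining condition $a\cdot b\in N-\phi(N)$ forces us to track the part of $(N:r)$ that lands inside $\phi(N)$ separately. The key observation I would isolate first is the set-theoretic decomposition: for any $r\in r(\Re)\setminus N$, an element $x\in(N:r)$ satisfies $r\cdot x\in N$, and exactly one of two things happens — either $r\cdot x\in N-\phi(N)$ or $r\cdot x\in\phi(N)$, i.e. $x\in(\phi(N):r)$.

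First I would prove $(i)\Rightarrow(ii)$. Fix a regular element $r\notin N$. The containment $N\cup(\phi(N):r)\subseteq(N:r)$ is routine: $N\subseteq(N:r)$ always holds, and since $\phi(N)\subseteq N$ we have $(\phi(N):r)\subseteq(N:r)$. For the reverse containment, take $x\in(N:r)$, so $r\cdot x\in N$ and $ann(r)=0$. If $r\cdot x\in\phi(N)$ then $x\in(\phi(N):r)$; otherwise $r\cdot x\in N-\phi(N)$, and the $\phi-r$-hyperideal hypothesis together with $ann(r)=0$ yields $x\in N$. Either way $x\in N\cup(\phi(N):r)$, giving equality.

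Next, $(ii)\Rightarrow(iii)$ is the classical ``an ideal equal to a union of two subideals equals one of them'' argument. Here $(N:r)$ is a hyperideal written as a union of the two subhyperideals $N$ and $(\phi(N):r)$; since a hyperideal (being in particular closed under the hypersum and absorbing) cannot be the union of two proper subhyperideals, it must coincide with one of them, so either $(N:r)=N$ or $(N:r)=(\phi(N):r)$. I would state the ``union of two subideals'' lemma explicitly in the hyperring setting, as this is the step most likely to need care: one assumes $(N:r)\neq(\phi(N):r)$, picks $x\in(N:r)\setminus(\phi(N):r)$, shows via closure under the hypersum $x+y$ and reversibility that any $y\in(\phi(N):r)$ must also lie in $N$, and concludes $(\phi(N):r)\subseteq N\subseteq(N:r)=N\cup(\phi(N):r)$, forcing $(N:r)=N$. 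Finally $(iii)\Rightarrow(i)$ unwinds the definition: if $a\cdot b\in N-\phi(N)$ with $ann(a)=0$, then $a$ is regular and $b\in(N:a)$; the case $a\in N$ is handled by noting that if $a\in N$ then $a\cdot b\in N$ trivially and we may reduce to regular $a\notin N$, so apply (iii) to $r=a$. Since $a\cdot b\notin\phi(N)$ means $b\notin(\phi(N):a)$, the alternative $(N:a)=(\phi(N):a)$ is impossible, leaving $(N:a)=N$, whence $b\in N$.

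The main obstacle I anticipate is the ``union of two subhyperideals'' step in $(ii)\Rightarrow(iii)$, because the usual ring-theoretic proof manipulates sums $x+y$, and in a Krasner hyperring $x+y$ is a \emph{set}, so one must argue that the entire hypersum $x+y\subseteq(N:r)$ distributes correctly across the two pieces and use reversibility (axiom (v) of the canonical hypergroup) to transfer membership. A secondary point to verify is that $(\phi(N):r)$ is genuinely a hyperideal; this follows from $\phi(N)\in Id(\Re)$ together with Lemma 1, but should be remarked so that the union decomposition is a statement about hyperideals and the lemma applies. Beyond these, the argument is a direct adaptation of the corresponding result for $\phi-r$-ideals in the ring case of \cite{X3}.
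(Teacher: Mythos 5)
Your cycle $(i)\Rightarrow(ii)\Rightarrow(iii)\Rightarrow(i)$ follows the paper's proof in outline, and two of the three legs are fine: $(i)\Rightarrow(ii)$ is essentially identical to the paper's argument, and in $(ii)\Rightarrow(iii)$ you correctly isolate the ``a hyperideal cannot be the union of two proper subhyperideals'' lemma and sketch its proof via reversibility --- a step the paper dismisses as ``evident,'' so your care there is a genuine improvement. The gap is in $(iii)\Rightarrow(i)$, at exactly the case you tried to wave away: a regular element $a$ lying \emph{inside} $N$. Conditions (ii) and (iii) are quantified only over $r\in r(\Re)\setminus N$, so they say nothing whatsoever about $(N:a)$ when $a\in N$. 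Your ``reduction'' --- ``if $a\in N$ then $a\cdot b\in N$ trivially and we may reduce to regular $a\notin N$'' --- is a non-argument: the definition requires you to produce $b\in N$ for \emph{this particular} pair $(a,b)$ with $a\cdot b\in N-\phi(N)$ and $ann(a)=0$, and there is no other regular element outside $N$ to which the problem can be transferred. (The paper's own proof of $(iii)\Rightarrow(i)$ silently commits the same omission: it applies (iii) to a regular $r$ with $r\cdot s\in N-\phi(N)$ without ever checking $r\notin N$. You at least noticed the case; neither of you closes it.)

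Moreover, the gap cannot be closed as the statement stands, because $(iii)\Rightarrow(i)$ is false with the quantifier $r\in r(\Re)\setminus N$. Take $\Re=\mathbb{Z}$ (a Krasner hyperring with singleton hyperaddition), $N=2\mathbb{Z}$, and $\phi=\phi_{2}$, so $\phi(N)=N^{2}=4\mathbb{Z}$. Every regular $r\notin N$ is an odd integer, and for such $r$ one has $(N:r)=2\mathbb{Z}=N$, so both (ii) and (iii) hold; yet $2\cdot 1\in N-\phi(N)$ with $ann(2)=0$ and $1\notin N$, so $N$ is \emph{not} a $\phi$-$r$-hyperideal. The equivalence becomes true --- and your argument for $(iii)\Rightarrow(i)$ then goes through verbatim, with no case split at all --- if (ii) and (iii) are required for all $r\in r(\Re)$ rather than only $r\in r(\Re)\setminus N$; alternatively one can keep the restricted quantifier but add to (ii)/(iii) the hypothesis $N-\phi(N)\subseteq zd(\Re)$, which makes the case $a\in N$ vacuous (a regular $a\in N$ with $a\cdot b\notin\phi(N)$ would force $a\in N-\phi(N)$, impossible). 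A correct write-up should either prove one of these corrected statements or record the counterexample; as submitted, your proof (like the paper's) establishes $(i)\Rightarrow(ii)\Rightarrow(iii)$ but not the return implication. A last, minor point: both you and the paper use $(\phi(N):r)\subseteq(N:r)$, which needs $\phi(N)\subseteq N$; since the paper's definition of $\phi$ does not impose this, you should state it as a standing assumption or replace $\phi(N)$ by $\phi(N)\cap N$ throughout.
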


\begin{proof}
($i)\Rightarrow(ii)$ We know that $N\subseteq(N:r)$ and $(\phi(N):r)\subseteq
(N:r).$ Let $s\in(N:r).$ Then $r\cdot s\in N.$ If $r\cdot s\in$ $\phi(N),$ then
$s\in(\phi(N):r).$ On the other hand, if $r\cdot s\notin$ $\phi(N),$ since $N$
is $\phi-r$-hyperideal, then $s\in N.$

($ii)\Rightarrow(iii)$ It is evident.

($iii)\Rightarrow(i)$ Let $r,s\in\Re$ such that $r\cdot s\in N-$ $\phi(N)$ and
$r$ be regular. Then, $(N:r)=N$ or $(N:r)=(\phi(N):r)$.

Case 1: Assume that $(N:r)=N.$ Since $r\cdot s\in N,$ then $s\in$ $(N:r)=N.$

Case 2: Assume that $(N:r)=(\phi(N):r)$. Since $r\cdot s\in N,$ then $s\in(N:r).$
Thus $r\cdot s\in$ $\phi(N).$ This is a contradiction.
\end{proof}

\begin{definition}
Let $N$ be a proper hyperideal of $\Re$, $J$ and $K$ be hyperideals of $\Re$
such that $J\cdot K\subseteq N,$ $J\cdot K\nsubseteq\phi(N)$ and $ann(J)=0$
implies that $K\subseteq N$. Then $N$ is called a strongly $\phi-r$-hyperideal.
\end{definition}

\begin{theorem}
Every strongly $\phi-r$-hyperideal is an $\phi-r$-hyperideal.
\end{theorem}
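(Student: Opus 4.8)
The plan is to deduce the element-wise defining condition of a $\phi-r$-hyperideal from the ideal-wise condition of a strongly $\phi-r$-hyperideal by replacing the elements $a$ and $b$ with the principal hyperideals they generate. Concretely, suppose $N$ is a strongly $\phi-r$-hyperideal and take $a,b\in\Re$ with $a\cdot b\in N-\phi(N)$ and $ann(a)=0$; the goal is to show $b\in N$. I would set $J=a\cdot\Re$ and $K=b\cdot\Re$, the principal hyperideals generated by $a$ and $b$, using that $\Re$ carries an identity $1_{\Re}$ so that $a=a\cdot 1_{\Re}\in J$ and $b\in K$.

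First I would check that $J$ and $K$ are indeed hyperideals: closure under hyper-subtraction follows from distributivity, since $a\cdot r_{1}-a\cdot r_{2}=a\cdot(r_{1}-r_{2})\subseteq a\cdot\Re$, and closure under outer multiplication follows from commutativity and associativity of $\cdot$. Next I would verify the three hypotheses in the definition of a strongly $\phi-r$-hyperideal. For $J\cdot K\subseteq N$: every element of $J\cdot K$ is a hyper-sum of products $(a\cdot r)\cdot(b\cdot s)=(a\cdot b)\cdot(r\cdot s)$, so $J\cdot K\subseteq(a\cdot b)\cdot\Re$, and since $a\cdot b\in N$ the hyperideal property of $N$ gives $(a\cdot b)\cdot\Re\subseteq N$. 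For $J\cdot K\nsubseteq\phi(N)$: because $a\in J$ and $b\in K$ the single element $a\cdot b$ lies in $J\cdot K$, yet $a\cdot b\notin\phi(N)$, so $J\cdot K$ cannot be contained in $\phi(N)$. For $ann(J)=0$: any $r$ with $r\cdot J=0$ in particular satisfies $r\cdot a=0$, hence $r\in ann(a)=0$, giving $ann(J)=ann(a)=0$.

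With all three hypotheses verified, the strongly $\phi-r$-hyperideal condition yields $K\subseteq N$, and since $b=b\cdot 1_{\Re}\in K$, I conclude $b\in N$, which is exactly the defining condition of a $\phi-r$-hyperideal.

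The argument is essentially bookkeeping, so I do not expect a genuine obstacle; the only point requiring care is the justification that $a\cdot\Re$ really is the principal hyperideal of $a$ and that the product $J\cdot K$ literally contains the element $a\cdot b$ rather than merely a hyper-sum built from it. Both rest on the presence of the identity $1_{\Re}$ and on the fact that in a Krasner hyperring the multiplication is an ordinary binary operation, so $a\cdot b$ is a single element; these ensure the translation between the element formulation and the ideal formulation is faithful.
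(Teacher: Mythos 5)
Your proof is correct and takes essentially the same route as the paper: pass from the elements $a,b$ to the principal hyperideals they generate, verify the three hypotheses of the strong condition, and conclude $b\in N$. If anything, your write-up is more careful than the paper's, which asserts $\langle a\rangle\cdot\langle b\rangle\subseteq N-\phi(N)$ (literally false in general, since $0$ may lie in $\phi(N)$), whereas you correctly separate the containment $J\cdot K\subseteq N$ from the non-containment $J\cdot K\nsubseteq\phi(N)$ witnessed by the single element $a\cdot b$.
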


\begin{proof}
Similar to \cite{X3}, let $a,b\in\Re$ such that $a\cdot b\in N-$ $\phi(N)$ and
$ann(a)=0.$ Then $<a>\cdot<b>\subseteq N-\phi(N).$ Obviously, $ann(<a>)=0.$
Since $N$ is strongly $\phi-r$-hyperideal, then $<b>\subseteq N.$ Therefore, $b\in N.$
\end{proof}

\begin{theorem}
Let $\Re$ satisfy the s.a.c., $N$ be a proper hyperideal of $\Re$ and
$\phi(N)$ be an $r$-hyperideal of $\Re.$ $N$ is $\phi-r$-hyperideal if and only
if for every hyperideal $J$ and $K$ of $\Re$ such that $J$ is finitely
generated, whenever $J\cdot K\subseteq N$, $J\cdot K\nsubseteq\phi(N)$ and
$ann(J)=0,$ implies that $K\subseteq N.$

\begin{proof}
$\Rightarrow:$ Let $N$ be a $\phi-r$-hyperideal, $\phi(N)$ be an $r$-hyperideal,
$J$ be finitely generated, $J$ and $K$ be hyperideals of $\Re$ such that
$J\cdot K\subseteq N$ and $J\cdot K\nsubseteq\phi(N).$ Assume that
$K\nsubseteq N.$ Then there exists an element $0\neq b\in K-N.$ Since
$\Re$ satisfy the s.a.c and $J$ is finitely generated, then there is an element
$a\in J$ with $ann(J)=ann(a)=0.$ Let $a\cdot b\in N.$ If $a\cdot b\notin
\phi(N)$, since $N$ is $\phi-r$-hyperideal, then $b\in N$ which is a contradiction.
If $a\cdot b\in\phi(N),$ since $\phi(N)$ is an $r$-hyperideal, then $b\in\phi(N)$
which is a contradiction. Thus, $K\subseteq N.$

$\Leftarrow:$ It is obvious.
\end{proof}
\end{theorem}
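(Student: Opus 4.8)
The plan is to prove the two implications separately; the forward direction carries all the content and the converse is a routine specialization to principal hyperideals.

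For the forward direction I would argue by contradiction. Assume $N$ is a $\phi-r$-hyperideal and $\phi(N)$ is an $r$-hyperideal, and suppose $J$ is finitely generated, $K$ is a hyperideal with $J\cdot K\subseteq N$, $J\cdot K\nsubseteq\phi(N)$ and $ann(J)=0$, but that $K\nsubseteq N$; I would derive a contradiction. First pick $b\in K\setminus N$. The crucial step is to invoke the strong annihilator condition: since $J$ is finitely generated there is an $a\in J$ with $ann(a)=ann(J)=0$, i.e.\ $a$ is regular. Because $a\in J$, $b\in K$ and $\Re$ is commutative, the hyperideal axioms give $a\cdot b\in J\cdot K\subseteq N$. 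Now split on whether $a\cdot b\in\phi(N)$: if $a\cdot b\notin\phi(N)$ then $a\cdot b\in N-\phi(N)$ with $ann(a)=0$, so the $\phi-r$-hyperideal property forces $b\in N$; if $a\cdot b\in\phi(N)$, then since $\phi(N)$ is an $r$-hyperideal and $ann(a)=0$ we obtain $b\in\phi(N)\subseteq N$, using $\phi(N)\subseteq N$. In either case $b\in N$, contradicting the choice of $b$, so $K\subseteq N$.

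For the converse I would specialize to principal hyperideals. Let $a,b\in\Re$ satisfy $a\cdot b\in N-\phi(N)$ and $ann(a)=0$, and set $J=\langle a\rangle$ and $K=\langle b\rangle$. Then $J$ is finitely generated and $ann(J)=ann(a)=0$; since $a\cdot b\in N$ and $N$ is a hyperideal, every element of $J\cdot K$ is a multiple of $a\cdot b$ and hence lies in $N$, so $J\cdot K\subseteq N$; and since $a\cdot b\in J\cdot K$ while $a\cdot b\notin\phi(N)$ we have $J\cdot K\nsubseteq\phi(N)$. The right-hand condition now yields $K\subseteq N$, whence $b\in N$, proving $N$ is a $\phi-r$-hyperideal.

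The main obstacle is entirely in the forward direction, and it has two sensitive points. The first is the correct use of the s.a.c.\ to extract from the finitely generated $J$ a single regular element $a$ realizing $ann(J)=0$; this is precisely why both the finite generation of $J$ and the s.a.c.\ hypothesis on $\Re$ are indispensable, since no such uniform $a$ need exist otherwise. The second is closing the case $a\cdot b\in\phi(N)$, which is exactly where the extra hypothesis that $\phi(N)$ be an $r$-hyperideal is used, together with the containment $\phi(N)\subseteq N$ to upgrade $b\in\phi(N)$ to $b\in N$. I note that the hypothesis $J\cdot K\nsubseteq\phi(N)$ is not actually needed to run this argument, so the forward implication in fact holds in the slightly stronger form in which that condition is dropped.
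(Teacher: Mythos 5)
Your proof is correct and follows essentially the same route as the paper's: in the forward direction you extract a regular element $a\in J$ with $ann(a)=ann(J)=0$ via the s.a.c., take $b\in K\setminus N$, and split on whether $a\cdot b$ lies in $\phi(N)$, using the $\phi-r$-hyperideal property in one case and the $r$-hyperideal property of $\phi(N)$ in the other, exactly as the paper does; your converse is just the routine specialization to principal hyperideals that the paper dismisses as obvious. Your closing observation that the hypothesis $J\cdot K\nsubseteq\phi(N)$ is never used in the forward direction is accurate and applies equally to the paper's own argument.
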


Now, we investigate the relation between the hyperideal $N$ and the set of all
zero divisors, while $\phi(N)$ is an $r$-hyperideal of $\Re.$

\begin{theorem}
Let $\phi(N)$ be an $r$-hyperideal of $\Re$ and $N$ be a proper hyperideal of
$\Re.$ If $N$ is $\phi-r$-hyperideal, then $N\subseteq zd(\Re)$.
\end{theorem}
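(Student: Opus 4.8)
The plan is to argue by contradiction: assume that $N$ contains a regular element and deduce that $N = \Re$, which is impossible since $N$ is proper. So first I would suppose $N \nsubseteq zd(\Re)$, which produces some $x \in N$ with $ann(x) = 0$, i.e.\ $x \in N \cap r(\Re)$. The aim is then to show that this single regular element of $N$ forces every element of $\Re$ into $N$. To that end I would fix an arbitrary $y \in \Re$ and use the hyperideal property to record that $x \cdot y \in N$. It is worth noting that in a Krasner hyperring the multiplication is an ordinary binary operation (only the addition is a hyperoperation), so $x \cdot y$ is a genuine single element and no set-membership subtleties enter at this step.

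The core of the proof is a dichotomy according to whether the element $x \cdot y$ lies in $\phi(N)$. If $x \cdot y \notin \phi(N)$, then $x \cdot y \in N - \phi(N)$ with $ann(x) = 0$, and the defining property of a $\phi$-$r$-hyperideal yields $y \in N$ at once. If instead $x \cdot y \in \phi(N)$, I would invoke the standing hypothesis that $\phi(N)$ is itself an $r$-hyperideal: since $ann(x) = 0$ and $x \cdot y \in \phi(N)$, the definition of $r$-hyperideal gives $y \in \phi(N)$, and then the convention $\phi(N) \subseteq N$ (used throughout the paper for the reduction function $\phi$, exactly as in the preceding s.a.c.\ theorem) upgrades this to $y \in N$. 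Combining the two cases, every $y \in \Re$ lies in $N$, whence $N = \Re$, contradicting properness; therefore $N \subseteq zd(\Re)$.

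I expect the only delicate point to be the second case of the dichotomy, and it is precisely there that the hypothesis on $\phi(N)$ is doing the work. Without the assumption that $\phi(N)$ is an $r$-hyperideal, the product $x \cdot y$ could ``hide'' inside $\phi(N)$ and escape the $\phi$-$r$-hyperideal condition, so that $y$ need not be forced into $N$; requiring $\phi(N)$ to be an $r$-hyperideal closes exactly this gap. The remaining ingredients, namely the hyperideal absorption $x \cdot y \in N$ and the conclusion $N = \Re$ from $y$ being arbitrary, are entirely routine.
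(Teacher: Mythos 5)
Your proof is correct and follows essentially the same route as the paper: assume $N\nsubseteq zd(\Re)$, pick a regular element of $N$, and split into cases according to whether the relevant product lies in $\phi(N)$, using the $\phi$-$r$-hyperideal property in one case and the hypothesis that $\phi(N)$ is an $r$-hyperideal (together with $\phi(N)\subseteq N$) in the other. The only cosmetic difference is that the paper specializes immediately to $y=1_{\Re}$ (writing $a=a\cdot 1_{\Re}$ and deriving $1_{\Re}\in N$), whereas you run the same dichotomy for an arbitrary $y\in\Re$ and conclude $N=\Re$; both contradict properness identically.
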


\begin{proof}
Let $N\nsubseteq zd(\Re).$ Then there exists an element $a\in N$ $-zd(\Re).$
So, $ann(a)=0.$ We have $a=a\cdot1_{\Re}\in N.$ If $a\notin\phi(N),$ then
$1_{\Re}\in N,$ which is a contradiction. If $a\in\phi(N),$ then $1_{\Re}\in
\phi(N)\subseteq N,$ since $\phi(N)$ is an $r$-hyperideal.
\end{proof}

\begin{theorem}
Let $\phi(N)$ be an $r$-hyperideal of $\Re$ and $N$ be a prime hyperideal of
$\Re.$ $N$ is $\phi-r$-hyperideal if and only if $N\subseteq zd(\Re)$.

\begin{proof}
Let $\phi(N)$ be an $r$-hyperideal of $\Re$ and $N$ be a prime hyperideal.

$\Rightarrow:$ It is obvious from previous theorem.

$\Leftarrow:$ Let $a\cdot b\in N-$ $\phi(N)$ and $ann(a)=0.$ Since $N$ is a
prime hyperideal, $a\in N$ or $b\in N.$ Since $N\subseteq zd(\Re)$, then $a\notin
N.$ Therefore $b\in N.$
\end{proof}
\end{theorem}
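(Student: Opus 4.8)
The plan is to establish the two implications of the biconditional separately, noting at the outset that a prime hyperideal is in particular a proper hyperideal, so the hypotheses of the preceding results apply to $N$ without any further verification.

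For the direction $(\Rightarrow)$ I would simply invoke the previous theorem. Assuming that $N$ is a $\phi-r$-hyperideal, the standing hypotheses---namely that $\phi(N)$ is an $r$-hyperideal and that $N$ is proper---are exactly those of that theorem, which already delivers the conclusion $N\subseteq zd(\Re)$. Hence no new argument is needed for this half.

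For the converse $(\Leftarrow)$, I would assume $N\subseteq zd(\Re)$ and take $a,b\in\Re$ with $a\cdot b\in N-\phi(N)$ and $ann(a)=0$, with the aim of showing $b\in N$. The key observation is that $a\cdot b$ is a genuine element of $\Re$, since multiplication is an ordinary binary operation in a Krasner hyperring; thus from $a\cdot b\in N$ and the primeness of $N$ I obtain the dichotomy $a\in N$ or $b\in N$. The decisive step is to eliminate the first alternative: if $a\in N$, then $a\in N\subseteq zd(\Re)$ would make $a$ a zero divisor, that is $ann(a)\neq 0$, contradicting the assumption $ann(a)=0$. Consequently $a\notin N$, which forces $b\in N$, and this is precisely what it means for $N$ to be a $\phi-r$-hyperideal.

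The argument is short, and the only point I expect to require care is the interplay between the two descriptions of a regular element: the condition $ann(a)=0$ is exactly the statement that $a$ lies outside $zd(\Re)$, and it is this reformulation that collides with the containment $N\subseteq zd(\Re)$ to exclude the case $a\in N$. I would also remark that the hypothesis ``$\phi(N)$ is an $r$-hyperideal'' is used only in the forward implication (through the previous theorem); the converse relies solely on the primeness of $N$ together with $N\subseteq zd(\Re)$, so it goes through irrespective of the behavior of $\phi$.
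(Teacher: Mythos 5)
Your proof is correct and follows essentially the same route as the paper: the forward direction is the preceding theorem (with the observation, left implicit in the paper, that a prime hyperideal is proper), and the converse uses primeness to get $a\in N$ or $b\in N$ and then rules out $a\in N$ because $ann(a)=0$ is incompatible with $N\subseteq zd(\Re)$. Your added remarks (that $a\cdot b$ is an honest element since multiplication is a binary operation, and that the hypothesis on $\phi(N)$ is only needed for the forward implication) are accurate refinements of the paper's argument, not a different approach.
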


In a hyperideal $N,$ nilpotent elements are also zero divisors. So we can
change the theorem as if $N$ is a hyperideal of which elements are nilpotent, then
$N$ is $\phi-r$-hyperideal if and only if $N$ is $\phi$-primary hyperideal.

\begin{theorem}
If $N$ is $\phi-r$-hyperideal, then $N-\phi(N)\subseteq zd(\Re).$
\end{theorem}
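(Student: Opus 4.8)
The statement to prove is that if $N$ is a $\phi$-$r$-hyperideal, then $N-\phi(N)\subseteq zd(\Re)$. My plan is to argue by contradiction: I would suppose that some element $a\in N-\phi(N)$ fails to be a zero divisor, i.e.\ $a\in r(\Re)$ with $ann(a)=0$, and then derive that $N$ must contain $1_{\Re}$, contradicting that $N$ is a proper hyperideal.

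First I would take an arbitrary $a\in N-\phi(N)$ and assume toward a contradiction that $a\notin zd(\Re)$, so that $ann(a)=0$. The key move is to factor $a$ trivially through the multiplicative identity: since $a\in N$ and $\Re$ has $1_{\Re}$ as its identity, we have $a=a\cdot 1_{\Re}\in N$. Crucially, because $a\in N-\phi(N)$, the product $a\cdot 1_{\Re}=a$ also lies in $N-\phi(N)$, so the defining hypothesis of a $\phi$-$r$-hyperideal applies directly to the factorization $a\cdot 1_{\Re}\in N-\phi(N)$ with $ann(a)=0$. By the definition of $\phi$-$r$-hyperideal this forces $1_{\Re}\in N$, whence $N=\Re$, contradicting that $N$ is proper. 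Therefore no such $a$ exists, and every element of $N-\phi(N)$ is a zero divisor.

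I expect the argument to be short and essentially identical in spirit to the proof of the earlier Theorem on $N\subseteq zd(\Re)$ (the one assuming $\phi(N)$ is an $r$-hyperideal), but cleaner: here I do not even need the hypothesis that $\phi(N)$ is an $r$-hyperideal, precisely because I restrict attention to elements of $N-\phi(N)$ rather than to all of $N$, so the troublesome case $a\in\phi(N)$ never arises. The one point to state carefully is that the identity factorization keeps the product inside $N-\phi(N)$: since the element being tested is $a$ itself and $a\notin\phi(N)$ by choice, the set-difference condition $a\cdot 1_{\Re}\in N-\phi(N)$ holds automatically. The main (mild) obstacle is simply ensuring the definition is invoked with $ann(a)=0$ rather than conflating regularity with the zero-divisor condition; once that is in place the contradiction with properness is immediate.
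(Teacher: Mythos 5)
Your proof is correct and follows exactly the paper's argument: assume some $a\in N-\phi(N)$ has $ann(a)=0$, apply the $\phi$-$r$-hyperideal condition to the factorization $a=a\cdot 1_{\Re}\in N-\phi(N)$, and conclude $1_{\Re}\in N$, contradicting properness. Your additional remarks (why the hypothesis on $\phi(N)$ being an $r$-hyperideal is not needed here) are accurate but do not change the substance.
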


\begin{proof}
Let us suppose that $N-\phi(N)\nsubseteq zd(\Re).$ Then there exists an element $a$ in
$N-\phi(N)$ such that $ann(a)=0.$ Since $a=a\cdot1\in N-\phi(N)$ and $N$ is
$\phi-r$-hyperideal, then $1\in N.$ That is a contradiction.
\end{proof}

\begin{theorem}
Let $\phi(N)$ be an $r$-hyperideal. $N$ is $\phi-r$-hyperideal if and only if
$N$ is an $r$-hyperideal.
\end{theorem}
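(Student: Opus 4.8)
The plan is to prove both implications separately, noting that only the forward direction requires the hypothesis that $\phi(N)$ is an $r$-hyperideal. For the reverse implication, suppose $N$ is an $r$-hyperideal and take $a,b\in\Re$ with $a\cdot b\in N-\phi(N)$ and $ann(a)=0$. Since $a\cdot b\in N$ and $N$ is an $r$-hyperideal, we immediately get $b\in N$, so $N$ is a $\phi-r$-hyperideal. This direction in fact holds for every $\phi$ with $\phi(N)\subseteq N$ and is just the instance $\psi_1=\phi_{\emptyset}\leq\phi=\psi_2$ of the monotonicity theorem proved earlier (part (i)), since an $r$-hyperideal is precisely a $\phi_{\emptyset}-r$-hyperideal.

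For the forward implication, assume $N$ is a $\phi-r$-hyperideal and $\phi(N)$ is an $r$-hyperideal. Let $a,b\in\Re$ satisfy $a\cdot b\in N$ with $ann(a)=0$; the goal is to show $b\in N$. Because $\cdot$ is an ordinary binary operation in a Krasner hyperring, $a\cdot b$ is a single element, so exactly one of $a\cdot b\in\phi(N)$ or $a\cdot b\notin\phi(N)$ holds, and I would split on this. If $a\cdot b\notin\phi(N)$, then $a\cdot b\in N-\phi(N)$, and the $\phi-r$-hyperideal hypothesis together with $ann(a)=0$ yields $b\in N$. If instead $a\cdot b\in\phi(N)$, then since $\phi(N)$ is an $r$-hyperideal and $ann(a)=0$, we obtain $b\in\phi(N)$; invoking the containment $\phi(N)\subseteq N$ then gives $b\in N$. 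In either case $b\in N$, so $N$ is an $r$-hyperideal.

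The argument is short, and the only genuine step is the case $a\cdot b\in\phi(N)$, which is exactly where the hypothesis on $\phi(N)$ is consumed: without it one could not conclude $b\in N$ when the product lands inside $\phi(N)$. The one point to keep explicit is the use of $\phi(N)\subseteq N$, which is guaranteed by the reduction structure underlying $\phi$ (condition (i) in the definition of a reduction of ideals, and satisfied by all the special choices $\phi_{\emptyset},\phi_{0},\phi_{1},\phi_{n},\phi_{\omega}$ listed after the definition of $\phi-r$-hyperideal). No obstacle beyond this case split is anticipated, since the multiplication being single-valued removes any complication that the hyperaddition might otherwise introduce.
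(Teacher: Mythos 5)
Your proposal is correct and follows essentially the same argument as the paper: the forward direction is proved by the identical case split on whether $a\cdot b\in\phi(N)$, using the $\phi-r$-hyperideal property of $N$ in one case and the $r$-hyperideal property of $\phi(N)$ together with $\phi(N)\subseteq N$ in the other, while the reverse direction is the immediate one the paper dismisses as obvious. Your explicit remark that the containment $\phi(N)\subseteq N$ is genuinely used (and is implicit in the paper's step $b\in\phi(N)\subseteq N$) is a worthwhile point of care, but it does not change the route.
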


\begin{proof}
($\Rightarrow:)$ Let $a\cdot b\in N$ and $ann(a)=0.$ If $a\cdot b\in\phi(N),$
since $\phi(N)$ is an $r$-hyperideal, then $b\in\phi(N)\subseteq N.$ If $a\cdot
b\notin\phi(N),$ since $N$ is $\phi-$r-hyperideal, then $b\in N.$

($\Leftarrow:)$ It is obvious.
\end{proof}

Similar to \cite{X3}, if $N$ is a hyperideal of $\Re,$ we define $\phi
_{N}:Id(\Re/N)\longrightarrow Id(\Re/N)\cup\{\emptyset\}$ by $\phi_{N}(M/N)=$
$(\phi(M)+N)/N$ for each hyperideal $N\subseteq M$. If $\phi(N)=\emptyset,$
$\phi_{N}(M/N)=\emptyset$. We see that $\phi_{N}(M/N)\subseteq M/N.$

\begin{theorem}
Suppose that $N\subseteq r(\Re)$ and $M$ are hyperideals in $\Re$ such that
$N\subseteq M.$ If $M$ is $\phi-r$-hyperideal of $\Re$, then $M/N$ is
$\phi_{N}-r$-hyperideal of $\Re/N.$
\end{theorem}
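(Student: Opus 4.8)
The plan is to check the defining property of a $\phi_N$-$r$-hyperideal for $M/N$ by pulling every condition back to $\Re$ and then invoking the hypothesis that $M$ is a $\phi$-$r$-hyperideal. First I would record the routine structural facts: since $N\subseteq M\subsetneq\Re$, the quotient $M/N$ is a proper hyperideal of $\Re/N$; and by the very definition of $\phi_N$ one has $\phi_N(M/N)=(\phi(M)+N)/N$, with the convention that $\phi_N(M/N)=\emptyset$ when $\phi(N)=\emptyset$.

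Next I would fix cosets $a+N,\,b+N\in\Re/N$ satisfying $(a+N)\cdot(b+N)\in M/N-\phi_N(M/N)$ together with $ann(a+N)=0_{\Re/N}$, the aim being $b+N\in M/N$. Using $(a+N)\cdot(b+N)=a\cdot b+N$, the membership $a\cdot b+N\in M/N$ gives $a\cdot b\in M$, while $a\cdot b+N\notin(\phi(M)+N)/N$ gives $a\cdot b\notin\phi(M)+N$ and hence $a\cdot b\notin\phi(M)$ (because $\phi(M)\subseteq\phi(M)+N$). Thus $a\cdot b\in M-\phi(M)$.

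The decisive step is to convert the vanishing of the annihilator in the quotient into $ann(a)=0$ in $\Re$, and this is exactly where $N\subseteq r(\Re)$ enters. I would argue that any $c\in ann(a)$ satisfies $a\cdot c=0$, so $(a+N)\cdot(c+N)=a\cdot c+N=N=0_{\Re/N}$, forcing $c+N\in ann(a+N)=\{0_{\Re/N}\}$ and therefore $c\in N$; this already shows $ann(a)\subseteq N$. Moreover $a\notin N$, since $a\in N$ would give $a+N=0_{\Re/N}$ with $ann(0_{\Re/N})=\Re/N\neq 0_{\Re/N}$, contradicting $ann(a+N)=0_{\Re/N}$ (here $\Re/N\neq 0$ because $N\subseteq M\subsetneq\Re$); in particular $a\neq 0$. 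Now if some $c\in ann(a)$ were nonzero, then $c\in N\subseteq r(\Re)$ would make $c$ regular, so $ann(c)=0$, and $a\cdot c=0$ would then force $a=0$, a contradiction. Hence $ann(a)=0$ in $\Re$.

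With $a\cdot b\in M-\phi(M)$ and $ann(a)=0$ established, the assumption that $M$ is a $\phi$-$r$-hyperideal of $\Re$ yields $b\in M$, whence $b+N\in M/N$, which is what we wanted. I expect the genuine obstacle to be the annihilator transfer of the third paragraph: triviality of $ann(a+N)$ in $\Re/N$ does not by itself give triviality of $ann(a)$ in $\Re$, and the argument truly depends on the nonzero elements of $N$ being regular (the content of $N\subseteq r(\Re)$); by comparison, the reduction to $a\cdot b\in M-\phi(M)$ and the final application of the $\phi$-$r$-hyperideal property of $M$ are routine unwindings of the quotient operations and of the definition of $\phi_N$.
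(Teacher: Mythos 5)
Your proof is correct and takes essentially the same route as the paper's: unwind the quotient conditions to get $a\cdot b\in M-\phi(M)$, use $N\subseteq r(\Re)$ to convert $ann_{\Re/N}(a+N)=0_{\Re/N}$ into $ann_{\Re}(a)=0$, and then apply the $\phi$-$r$-hyperideal property of $M$. The only difference is one of detail: the paper simply asserts the annihilator-transfer step ("as $N\subseteq r(\Re)$, $ann_{\Re}(r)=0$"), whereas you supply its justification (showing $ann(a)\subseteq N$, that $a\notin N$, and that regularity of the nonzero elements of $N$ then forces $ann(a)=0$), which is exactly the gap a careful reader must fill.
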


\begin{proof}
Suppose that $(r+N)\cdot(s+N)\subseteq$ $M/N-\phi_{M}(M/N)=M/N-(\phi(M)+N)/N$
and $ann_{\Re/N}(r+N)=0.$ Then $r\cdot s\in M-\phi(M)$ and as $N\subseteq
r(\Re),$ $ann_{\Re}(r)=0$. Since $M$ is $\phi-r$-hyperideal, then $s\in M.$
Therefore, $s+N\subseteq$ $M/N.$
\end{proof}

\begin{theorem}
Suppose that $N$ is an $r$-hyperideal and $M$ is a hyperideal in $\Re$ such that
$N\subseteq M.$ If $M/N$ is $r$-hyperideal of $\Re/N$, then $M$ is $\phi
-r$-hyperideal of $\Re.$
\end{theorem}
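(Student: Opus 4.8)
The plan is to prove the stronger assertion that $M$ is itself an $r$-hyperideal of $\Re$; since the constant function $\phi_{\emptyset}$ (with $\phi_{\emptyset}(J)=\emptyset$ for all $J$) lies below every $\phi$, the monotonicity statement proved earlier (part (i) of the theorem comparing $\psi_{1}\le\psi_{2}$) then immediately upgrades ``$r$-hyperideal'' to ``$\phi-r$-hyperideal'' for any $\phi$. So it suffices to verify the plain $r$-hyperideal condition for $M$, and the essential device is to transfer the hypotheses into the quotient hyperring $\Re/N$.

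First I would take $a,b\in\Re$ with $a\cdot b\in M$ and $ann_{\Re}(a)=0$, and pass to $\Re/N$. Because $N$ is a hyperideal, coset multiplication in the Krasner quotient is given by $(a+N)\cdot(b+N)=a\cdot b+N$; since $a\cdot b\in M$ and $N\subseteq M$, this coset lies in $M/N$. The crucial step is to show that $a+N$ is regular in $\Re/N$, that is, $ann_{\Re/N}(a+N)=\{0_{\Re/N}\}=N$. Suppose $(c+N)\cdot(a+N)=N$ for some $c\in\Re$; as $(c+N)\cdot(a+N)=c\cdot a+N$ and $x+N=N$ exactly when $x\in N$, this says precisely $c\cdot a\in N$. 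Here is where the hypothesis that $N$ is an $r$-hyperideal is used: from $c\cdot a\in N$ together with $ann_{\Re}(a)=0$ we conclude $c\in N$, i.e. $c+N=N$. Hence the annihilator of $a+N$ in $\Re/N$ is trivial.

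With $(a+N)\cdot(b+N)\in M/N$ and $a+N$ regular, the assumption that $M/N$ is an $r$-hyperideal of $\Re/N$ forces $b+N\in M/N$, and therefore $b\in M$. This establishes that $M$ is an $r$-hyperideal of $\Re$, and the reduction of the first paragraph then yields that $M$ is a $\phi-r$-hyperideal, completing the argument.

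The step I expect to be the main obstacle is the regularity transfer in the second paragraph. One must handle the Krasner quotient carefully: that $0_{\Re/N}$ is the coset $N$, that $(c+N)\cdot(a+N)=N$ is genuinely equivalent to $c\cdot a\in N$ (and not merely to $(c\cdot a)\cap N\neq\emptyset$), and that coset multiplication is well defined so that the computation $(c+N)\cdot(a+N)=c\cdot a+N$ is legitimate. Once the identification $ann_{\Re/N}(a+N)=N$ is justified precisely through the $r$-hyperideal property of $N$, everything that remains is formal.
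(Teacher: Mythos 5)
Your proof is correct, and it is organized differently from the paper's. The paper argues for the $\phi$-$r$ property directly: given $r\cdot s\in M-\phi(M)$ with $ann(r)=0$, it splits into the cases $r\cdot s\in N$ (settled immediately by $N$ being an $r$-hyperideal) and $r\cdot s\in M-N$ (settled by passing to $\Re/N$), and in the second case it simply \emph{asserts} that $ann(r)=0$ implies $ann(r+N)=0$. That assertion is false for general quotients (in the Krasner hyperring $\mathbb{Z}$, the element $2$ is regular, yet $2+4\mathbb{Z}$ is a zero divisor in $\mathbb{Z}/4\mathbb{Z}$); it holds here only because $N$ is an $r$-hyperideal, which is exactly the regularity-transfer step you single out and justify ($c\cdot a\in N$ with $ann(a)=0$ forces $c\in N$, so $c+N=N$). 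In other words, your second paragraph supplies precisely the content that the paper's proof leaves implicit, and your version also shows the paper's case split is unnecessary: once $a+N$ is known to be regular, the quotient argument covers the case $a\cdot b\in N$ as well. Structurally, what you prove is the strictly stronger statement that $M$ is a plain $r$-hyperideal --- which is in fact the paper's own earlier proposition in Section 3 (if $N$ is an $r$-hyperideal, $N\subseteq M$, and $M/N$ is an $r$-hyperideal of $\Re/N$, then $M$ is an $r$-hyperideal of $\Re$) --- followed by the reduction via the monotonicity theorem with $\phi_{\emptyset}\leq\phi$. What your route buys is a stronger conclusion, a gap-free regularity transfer, and a cleaner one-case argument; what the paper's route buys is only that it stays verbally within the $\phi$-$r$ framework of the section.
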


\begin{proof}
Suppose that $r\cdot s\in$ $M-\phi(M)$ with $ann(r)=0.$ If $r\cdot s\in
N$, since $N$ is an $r$-hyperideal, then $s\in N\subseteq M.$ If $r\cdot s\in M-N,$ then
$r\cdot s+N=(r+N)\cdot(s+N)\subseteq M/N.$ In addition, since $ann(r)=0,$ then
$ann(r+N)=0.$ Therefore, since $M/N$ is $r$-hyperideal of $\Re/N,$ then $s+N\subseteq
M/N.$ Hence, $s\in M,$ as desired.
\end{proof}

Let $\phi:Id(\Re)\rightarrow Id(\Re)\cup\{0\}$ be a function. If $N$ and $M$
are hyperideals of $\Re$ such that $N\subseteq M$ implies that $\phi
(N)\subseteq\phi(M)$, then $\phi$ is said to preserve the order.

\begin{theorem}
Let $A$ be a subset of $\Re,$ $A\neq\emptyset$ and $\phi$ preserves the order. If
$N$ is $\phi-r$-hyperideal such that $A\nsubseteq N,$ then $(N:A)$ is $\phi-r$-hyperideal.
\end{theorem}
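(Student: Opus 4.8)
The plan is to verify the defining property of a $\phi-r$-hyperideal for $(N:A)$ directly, where $(N:A)=\{x\in\Re:x\cdot A\subseteq N\}$. First I would record that $(N:A)$ is a hyperideal of $\Re$ (the colon of a hyperideal by any subset is a hyperideal) and, crucially, that it is \emph{proper}: since $A\nsubseteq N$ there is an element $s_{0}\in A$ with $s_{0}\notin N$, and then $1_{\Re}\cdot s_{0}=s_{0}\notin N$ shows $1_{\Re}\notin(N:A)$, so $(N:A)\neq\Re$. This is exactly where the hypothesis $A\nsubseteq N$ enters, and it is what makes the notion of a $\phi-r$-hyperideal applicable to $(N:A)$ at all. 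With properness in hand, I would reduce the goal to the element condition: take $a,b\in\Re$ with $a\cdot b\in(N:A)-\phi((N:A))$ and $ann(a)=0$, and aim to show $b\in(N:A)$, i.e. $b\cdot s\in N$ for every $s\in A$.

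The core computation is straightforward. Fix $s\in A$. From $a\cdot b\in(N:A)$ we get $(a\cdot b)\cdot s=a\cdot(b\cdot s)\in N$, and by hypothesis $ann(a)=0$. If $a\cdot(b\cdot s)\notin\phi(N)$, then $a\cdot(b\cdot s)\in N-\phi(N)$ with $ann(a)=0$, so the assumption that $N$ is a $\phi-r$-hyperideal immediately gives $b\cdot s\in N$, as desired. Thus the whole argument rests on disposing of the complementary case $a\cdot(b\cdot s)\in\phi(N)$, and I expect this to be the main obstacle.

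In that case the only structural tool available is that $\phi$ preserves the order: from $N\subseteq(N:A)$ one gets $\phi(N)\subseteq\phi((N:A))$, hence $a\cdot(b\cdot s)=(a\cdot b)\cdot s\in\phi((N:A))$. To close the proof by contradiction I would then need to upgrade this to $a\cdot b\in\phi((N:A))$, contradicting $a\cdot b\notin\phi((N:A))$ --- but passing from $(a\cdot b)\cdot s\in\phi((N:A))$ to $a\cdot b\in\phi((N:A))$ is precisely the step that order preservation does \emph{not} supply. The cleanest way to see what is really required is to route the argument through the colon characterization of $\phi-r$-hyperideals established earlier in this section, together with the routine identity $((N:A):r)=((N:r):A)$: for regular $r\notin(N:A)$ one has $r\notin N$, so $(N:r)=N$ (which gives $((N:A):r)=(N:A)$ and finishes) or $(N:r)=(\phi(N):r)$ (which gives $((N:A):r)=((\phi(N):A):r)$). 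The second alternative yields the required $((N:A):r)=(\phi((N:A)):r)$ only if $\phi((N:A))$ and $(\phi(N):A)$ agree, whereas order preservation delivers merely $\phi(N)\subseteq\phi((N:A))$. Consequently I anticipate that the genuine content of the statement is the control of $\phi((N:A))$ in terms of $\phi(N)$, and that making the hard case go through cleanly needs $\phi$ to be compatible with residuation (e.g. $\phi((N:A))=(\phi(N):A)$); this is the step I would scrutinize most carefully, since order preservation by itself appears insufficient to force the conclusion.
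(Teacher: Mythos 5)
Your diagnosis is exactly right, and the obstruction you isolated is not a defect of your write-up but of the theorem itself: the paper's proof performs, without justification, precisely the step you declined to make. After noting $\phi(N)\subseteq\phi((N:A))$ (from $N\subseteq(N:A)$ and order preservation), the paper simply asserts ``We have $a\cdot b\cdot A\subseteq N-\phi(N)$'' and then applies the $\phi-r$-property of $N$. The inclusion $a\cdot b\cdot A\subseteq N$ is clear, but the claim that $a\cdot b\cdot s\notin\phi(N)$ for every $s\in A$ is your hard case: it amounts to the implication $(a\cdot b)\cdot s\in\phi(N)\Rightarrow a\cdot b\in\phi((N:A))$, which order preservation does not supply (it gives only $\phi(N)\subseteq\phi((N:A))$, which points the wrong way). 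Your treatment is in fact more careful than the paper's: you also verify properness of $(N:A)$ from $A\nsubseteq N$, a point the paper never addresses.

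Moreover, no proof from the stated hypotheses can exist, because the statement is false in this generality. Let $\Re=K[x,y]/(xy)$ with $K$ a field (a commutative ring, hence a commutative Krasner hyperring), and define $\phi(J)=J\cap(x^{2})$ for every hyperideal $J$; this $\phi$ preserves the order and even satisfies $\phi(J)\subseteq J$. Take $N=(x^{2})$ and $A=\{x\}$, so $A\nsubseteq N$. Since $\phi(N)=N$, the set $N-\phi(N)$ is empty and $N$ is vacuously a $\phi-r$-hyperideal. On the other hand, for $f\in\Re$ with constant term $c$ one has $f\cdot x\in(x^{2})$ iff $c=0$, so $(N:A)=(x,y)$, the hyperideal of elements with zero constant term, and $\phi((N:A))=(x,y)\cap(x^{2})=(x^{2})$. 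The element $a=x+y$ is regular: if $(x+y)f\in(xy)$ in $K[x,y]$, then the primes $x$ and $y$, neither of which divides $x+y$, both divide $f$, so $f\in(xy)$; hence $ann(x+y)=0$ in $\Re$. Taking $b=1$ gives $a\cdot b=x+y\in(N:A)-\phi((N:A))$ and $ann(a)=0$, yet $b=1\notin(N:A)$. So $(N:A)$ is not a $\phi-r$-hyperideal, and the failure occurs exactly through the loophole you described: $a\cdot b\cdot x=x^{2}\in\phi(N)$, so the $\phi-r$-property of $N$ never gets to act.

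The repair is of the type you anticipated. If one assumes in addition that $\phi(N)$ is an $r$-hyperideal, your hard case closes at once: $a\cdot(b\cdot s)\in\phi(N)$ with $ann(a)=0$ forces $b\cdot s\in\phi(N)\subseteq N$. (Equivalently, combine Theorem 16, Corollary 1(f) and Theorem 9(i): $N$ is then an honest $r$-hyperideal, hence so is $(N:A)$, hence $(N:A)$ is $\phi-r$ for any $\phi$.) A weaker sufficient condition is $(\phi(N):s)\subseteq\phi((N:A))$ for each $s\in A$. Note that the compatibility you proposed, $\phi((N:A))=(\phi(N):A)$, is still not obviously enough: $(\phi(N):A)$ is an intersection over all of $A$, whereas your hard case is triggered by a single bad $s\in A$, so what is needed is that the union $\bigcup_{s\in A}(\phi(N):s)$, not the intersection, lands in $\phi((N:A))$.
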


\begin{proof}
Suppose that $a\cdot b\in(N:A)-\phi(N:A)$ and $ann(a)=0.$ Since $\phi$
preserves order, then $\phi(N)\subseteq\phi(N:A).$ We have $a\cdot b\cdot A\subseteq
N-\phi(N)$. Therefore, $b\cdot A\subseteq N$ and $b\in(N:A).$
\end{proof}

\begin{corollary}
Let $\{N_{i}\}_{i\in\Lambda}$ directed collection of ascending chain $\phi
-r$-hyperideals of $\Re$ and $\phi$ preserves the order. Then $N=\underset
{N\in\Lambda}{\cup}N_{i}$ is $\phi-r$-hyperideal.
\end{corollary}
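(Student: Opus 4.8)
The plan is to verify directly the two requirements in the definition of a $\phi-r$-hyperideal for $N=\bigcup_{i\in\Lambda}N_{i}$: first that $N$ is a proper hyperideal, and then that $a\cdot b\in N-\phi(N)$ with $ann(a)=0$ forces $b\in N$.

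First I would check that $N$ is a proper hyperideal. Since $\{N_{i}\}_{i\in\Lambda}$ is directed (an ascending chain), any two elements $x,y\in N$ lie in a common member $N_{k}$, so $x-y\subseteq N_{k}\subseteq N$ and $r\cdot x\in N_{k}\subseteq N$ for every $r\in\Re$; by the hyperideal criterion of Lemma 1 (closure under hyper-subtraction and absorption) this makes $N$ a hyperideal. Properness follows because each $N_{i}$ is a proper hyperideal, so $1_{\Re}\notin N_{i}$ for all $i$, and therefore $1_{\Re}\notin N$.

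The core of the argument is the defining implication. Suppose $a\cdot b\in N-\phi(N)$ with $ann(a)=0$. Because multiplication in a Krasner hyperring is an ordinary binary operation, $a\cdot b$ is a single element lying in the union, so directedness yields an index $j$ with $a\cdot b\in N_{j}$. The key step is to push the avoidance condition $a\cdot b\notin\phi(N)$ down to the level of $N_{j}$: since $N_{j}\subseteq N$ and $\phi$ preserves the order, $\phi(N_{j})\subseteq\phi(N)$, whence $a\cdot b\notin\phi(N_{j})$. Thus $a\cdot b\in N_{j}-\phi(N_{j})$ with $ann(a)=0$, and as $N_{j}$ is a $\phi-r$-hyperideal we conclude $b\in N_{j}\subseteq N$, as desired.

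The only genuinely delicate point is this transfer of the $\phi$-avoidance condition from $N$ to the member $N_{j}$, and it is precisely where the order-preservation hypothesis on $\phi$ is used: without $\phi(N_{j})\subseteq\phi(N)$ one could not guarantee $a\cdot b\notin\phi(N_{j})$, so the $\phi-r$-hyperideal property of $N_{j}$ would be inapplicable. Everything else reduces to the routine fact that a directed union of hyperideals is again a hyperideal.
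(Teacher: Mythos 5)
Your proof is correct and follows essentially the same route as the paper's: locate $a\cdot b$ in some member $N_{j}$ of the chain, use order-preservation of $\phi$ to transfer $a\cdot b\notin\phi(N)$ down to $a\cdot b\notin\phi(N_{j})$, and then apply the $\phi-r$-hyperideal property of $N_{j}$. You additionally verify that the directed union is a proper hyperideal and make the order-preservation step explicit, both of which the paper's terse proof leaves implicit.
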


\begin{proof}
Let $a\cdot b\in N-\phi(N)$ and $ann(a)=0.$ Then $a\cdot b\in N_{k}$ and
$a\cdot b\notin\phi(N_{N}),$ for $k\in\Lambda.$ Since $N_{k}$ is $\phi
-r$-hyperideal, then $b\in N_{k}\subseteq N.$\\
\end{proof}

{\textbf{Compliance with Ethical Standards}}\\

\textbf{Conflict of Interest} All authors declare that they
have no conflict of interest.\\

\textbf{Ethical approval} This article does not contain any
studies with human participants or animals performed
by any of the authors.\\

\textbf{Data Availability Statement}

Data sharing not applicable to this article as no data-sets were generated or analysed during the current study.

\end{document}